\newcommand\rmi{\hbox{\rm (i)}}
\newcommand\rmii{\hbox{\rm (ii)}}
\numberwithin{equation}{section}
\newtheorem{theorem}{Theorem}[section]
\newtheorem{proposition}[theorem]{Proposition}
\newtheorem{lemma}[theorem]{Lemma}
\newtheorem{corollary}[theorem]{Corollary}
\theoremstyle{definition}
\newtheorem{definition}[theorem]{Definition}
\newtheorem{remark}[theorem]{Remark}%{\emph{\bf{Remark}}}
\newtheorem*{notation*}{Notation}
\renewcommand{\Re}{\mathrm{Re}\,}
\newcommand{\Dom}{\mathrm{Dom}}
\newcommand{\N}{\mathbb{N}}
\newcommand{\R}{\mathbb{R}}
\newcommand{\C}{\mathbb{C}}
\DeclareMathAlphabet{\mathpzc}{OT1}{pzc}{m}{en}
\DeclareMathOperator{\supp}{supp}
\DeclarePairedDelimiter{\abs}{\lvert}{\rvert}
\lbrace\begin{array}{@{}l@{}}}%
\newcommand*{\mint}[1]{%
	% #1: overlay symbol
	\mint@l{#1}{}%
}
\newcommand*{\mint@l}[2]{%
	% #1: overlay symbol
	% #2: limits
	\@ifnextchar\limits{%
		\mint@l{#1}%
	}{%
	\@ifnextchar\nolimits{%
		\mint@l{#1}%
	}{%
	\@ifnextchar\displaylimits{%
		\mint@l{#1}%
	}{%
	\mint@s{#2}{#1}%
}%
}%
}%
}
\newcommand*{\mint@s}[2]{%
	% #1: limits
	% #2: overlay symbol
	\@ifnextchar_{%
		\mint@sub{#1}{#2}%
	}{%
	\@ifnextchar^{%
		\mint@sup{#1}{#2}%
	}{%
	\mint@{#1}{#2}{}{}%
}%
}%
}
\def\mint@sub#1#2_#3{%
	\@ifnextchar^{%
		\mint@sub@sup{#1}{#2}{#3}%
	}{%
	\mint@{#1}{#2}{#3}{}%
}%
}
\def\mint@sup#1#2^#3{%
	\@ifnextchar_{%
		\mint@sub@sup{#1}{#2}{#3}%
	}{%
	\mint@{#1}{#2}{}{#3}%
}%
}
\def\mint@sub@sup#1#2#3^#4{%
	\mint@{#1}{#2}{#3}{#4}%
}
\def\mint@sup@sub#1#2#3_#4{%
	\mint@{#1}{#2}{#4}{#3}%
}
\newcommand*{\mint@}[4]{%
	% #1: \limits, \nolimits, \displaylimits
	% #2: overlay symbol: -, =, ...
	% #3: subscript
	% #4: superscript
	\mathop{}%
	\mkern-\thinmuskip
	\mathchoice{%
		\mint@@{#1}{#2}{#3}{#4}%
		\displaystyle\textstyle\scriptstyle
	}{%
	\mint@@{#1}{#2}{#3}{#4}%
	\textstyle\scriptstyle\scriptstyle
}{%
\mint@@{#1}{#2}{#3}{#4}%
\scriptstyle\scriptscriptstyle\scriptscriptstyle
}{%
\mint@@{#1}{#2}{#3}{#4}%
\scriptscriptstyle\scriptscriptstyle\scriptscriptstyle
}%
\mkern-\thinmuskip
\int#1%
\ifx\\#3\\\else_{#3}\fi
\ifx\\#4\\\else^{#4}\fi  
}
\newcommand*{\mint@@}[7]{%
	% #1: limits
	% #2: overlay symbol
	% #3: subscript
	% #4: superscript
	% #5: math style
	% #6: math style for overlay symbol
	% #7: math style for subscript/superscript
	\begingroup
	\sbox0{$#5\int\m@th$}%
	\sbox2{$#5\int_{}\m@th$}%
	\dimen2=\wd0 %
	% => \dimen2 = width of \int
	\let\mint@limits=#1\relax
	\ifx\mint@limits\relax
	\sbox4{$#5\int_{\kern1sp}^{\kern1sp}\m@th$}%
	\ifdim\wd4>\wd2 %
	\let\mint@limits=\nolimits
	\else
	\let\mint@limits=\limits
	\fi
	\fi
	\ifx\mint@limits\displaylimits
	\ifx#5\displaystyle
	\let\mint@limits=\limits
	\fi
	\fi
	\ifx\mint@limits\limits
	\sbox0{$#7#3\m@th$}%
	\sbox2{$#7#4\m@th$}%
	\ifdim\wd0>\dimen2 %
	\dimen2=\wd0 %
	\fi
	\ifdim\wd2>\dimen2 %
	\dimen2=\wd2 %
	\fi
	\fi
	\rlap{%
		$#5%
		\vcenter{%
			\hbox to\dimen2{%
				\hss
				$#6{#2}\m@th$%
				\hss
			}%
		}%
		$%
	}%
	\endgroup
}
\def\overbracket#1{\mathop{\vbox{\ialign{##\crcr\noalign{\kern3\p@}
				\downbracketfill\crcr\noalign{\kern3\p@\nointerlineskip}
				$\hfil\displaystyle{#1}\hfil$\crcr}}}\limits}
\def\underbracket#1{\mathop{\vtop{\ialign{##\crcr
				$\hfil\displaystyle{#1}\hfil$\crcr\noalign{\kern3\p@\nointerlineskip}
				\upbracketfill\crcr\noalign{\kern3\p@}}}}\limits}
\def\overparenthesis#1{\mathop{\vbox{\ialign{##\crcr\noalign{\kern3\p@}
				\downparenthfill\crcr\noalign{\kern3\p@\nointerlineskip}
				$\hfil\displaystyle{#1}\hfil$\crcr}}}\limits}
\def\underparenthesis#1{\mathop{\vtop{\ialign{##\crcr
				$\hfil\displaystyle{#1}\hfil$\crcr\noalign{\kern3\p@\nointerlineskip}
				\upparenthfill\crcr\noalign{\kern3\p@}}}}\limits}
\def\downparenthfill{$\m@th\braceld\leaders\vrule\hfill\bracerd$}
\def\upparenthfill{$\m@th\bracelu\leaders\vrule\hfill\braceru$}
\def\upbracketfill{$\m@th\makesm@sh{\llap{\vrule\@height3\p@\@width.7\p@}}%
	\leaders\vrule\@height.7\p@\hfill
	\makesm@sh{\rlap{\vrule\@height3\p@\@width.7\p@}}$}
\def\downbracketfill{$\m@th
	\makesm@sh{\llap{\vrule\@height.7\p@\@depth2.3\p@\@width.7\p@}}%
	\leaders\vrule\@height.7\p@\hfill
	\makesm@sh{\rlap{\vrule\@height.7\p@\@depth2.3\p@\@width.7\p@}}$}
\newcommand{\loc}{\mathrm{loc}}
\newcommand{\glob}{\mathrm{glob}}
\newcommand{\dd}{\mathrm{d}}
\newcommand{\As}{\mathcal{A}}
\newcommand{\Ls}{\mathcal{L}}
\newcommand{\Rs}{\mathcal{R}}
\newcommand{\Pc}{\mathcal{P}}
\newcommand{\fin}{\mathrm{fin}}
\DeclareMathAlphabet{\mathcal}{OMS}{cmsy}{m}{n}
\begin{document}
\title[Singular integrals and Hardy type spaces for the inverse Gauss measure]{Singular integrals and Hardy type spaces for\\ the inverse Gauss measure}
\author{Tommaso Bruno}
\address{Dipartimento di Scienze Matematiche ``Giuseppe Luigi Lagrange'', Politecnico di Torino, Corso Duca degli Abruzzi 24, 10129 Torino, Italy}
\email{tommaso.bruno@polito.it}

\subjclass[2010]{42B20 -- 42B30 -- 42B35}
\keywords{Singular integrals, Endpoint results, Hardy spaces, Weak type}

\thanks{
%Both authors are  partially supported by the grant PRIN 2010-11 {\em Real and Complex Manifolds: Geometry, Topology and Harmonic Analysis} and the grant PRIN 2015 {\em Real and Complex Manifolds: Geometry, Topology and Harmonic Analysis}. 
    The author is member of the Gruppo Nazionale per l'Analisi Matematica, la Probabilit\`a e le loro Applicazioni (GNAMPA) of the Istituto Nazionale di Alta Matematica (INdAM)
    }

\maketitle

\begin{abstract}
Let $\gamma_{-1}$ be the absolutely continuous measure on $\R^n$ whose density is the reciprocal of a Gaussian and consider the natural weighted Laplacian $\As$ on $L^2(\gamma_{-1})$. In this paper, we prove boundedness and unboundedness results for the purely imaginary powers and the first order Riesz transforms associated with the translated operators $\As+\lambda I$, $\lambda\geq0$, from certain new Hardy-type spaces adapted to $\gamma_{-1}$ to $L^1(\gamma_{-1})$. We also investigate the weak type $(1,1)$ of these operators. 
\end{abstract}
\section{Introduction}
Let $n\in \N$ and denote with $\gamma_{-1}$ the absolutely continuous measure on $\R^n$ whose density is the reciprocal of a normalised Gaussian, i.e.\
\[\dd \gamma_{-1}(x)\coloneqq \pi^{n/2} e^{|x|^2}\, \dd x \]
where $\dd x$ is the Lebesgue measure on $\R^n$. We call $\gamma_{-1}$ the ``inverse Gauss'' measure. Consider the second-order differential operator
\[\mathcal{A}_0f(x)\coloneqq -\frac{1}{2}\Delta f(x) - x\cdot \nabla f(x), \qquad f\in C_c^\infty(\R^n).\]
It is easy to see that $\As_0$ is positive and symmetric on $L^2(\gamma_{-1})$. By a classical argument (see e.g.~\cite{Strichartz}) $\As_0$ is essentially self-adjoint on $L^2(\gamma_{-1})$; we denote with $\As$ its positive self-adjoint closure. In this paper, we prove endpoint results for the imaginary powers and the first order Riesz transforms associated with $\As$ or with its translations. By these, we mean the operators
\begin{equation}\label{translatedOp}
(\As+\lambda I)^{iu}, \quad u\in \R\setminus \{0\}, \qquad  \Rs_\lambda \coloneqq \nabla (\As +\lambda I)^{-1/2}, 
\end{equation}
for any $\lambda\geq 0$, respectively. As we shall see, it is rather natural to introduce these \emph{translations} of $\As$. The operator $\As$ was first introduced in F.~Salogni's PhD thesis~\cite{Salogni}, where the Mehler semigroup $e^{-t\As}$ is studied on $L^p(\gamma_{−1})$, $p\in (1,\infty)$, and the weak type $(1,1)$ of the associated maximal operator $\mathcal{H}^∗$ is proved (see also~\cite{MS}).

\smallskip

The interest in imaginary powers and Riesz transforms of $\As$ comes from different aspects. As pointed out by Salogni~\cite{Salogni}, the operator $\As$ can be seen as a restriction of the Laplace-Beltrami operator on a warped-product manifold whose Ricci tensor is unbounded from below. In a recent series of papers by Mauceri, Meda and Vallarino~\cite{MMVHardy,MMVAtomic, MMV, MMV-Harmonic} a theory of Hardy-type spaces on certain noncompact manifolds is developed to obtain endpoint estimates for the imaginary powers and the Riesz transforms associated with the Laplace-Beltrami operator $\mathscr{L}$ of the manifold, provided the manifold has positive injectivity radius, $\mathscr{L}$ has spectral gap, and the Ricci tensor is \emph{bounded from below}. We emphasize that such assumptions force the Riemannian measure of the manifold to be non-doubling. The study of imaginary powers and Riesz transforms of $\As$ may thus be a first step in understanding a similar theory on manifolds whose Ricci tensor is not bounded from below. 

In addition to this, we may look at the operator $\As$ as the weighted Laplacian of the weighted manifold $(\R^n,\gamma_{-1})$, i.e.\ the second order differential operator which is self-adjoint on $L^2(\gamma_{-1})$. On weighted Riemannian manifolds, there is a natural notion of curvature tensor known as \emph{Bakry-Emery} curvature~\cite{Bakry}. It was shown by Bakry~\cite{Bakry} (see also~\cite{CarbDrag}) that in the general setting of weighted Riemannian manifolds with weighted Laplacian $\mathscr{L}$, the lower bound of the Bakry-Emery curvature tensor, when it exists, plays a role in the $L^p$-boundedness, $1<p<\infty$, of the Riesz transforms of $\mathscr{L}$. No similar result is known for the endpoint at $p=1$. Under this point of view, it is noteworthy that the weighted manifold $(\R^n,\gamma_{-1})$ has constant Bakry-Emery curvature tensor equal to minus twice the identity, so that the operator $\As$ is the prototype of weighted Laplacian on a weighted manifold with constant and negative Bakry-Emery curvature tensor.

Finally, as explained by the authors in~\cite{LiSjogrenWu}, very few endpoint results are known for singular integrals associated with weighted Laplacians on manifolds with exponential, or super-exponential, volume growth. This should make our results interesting in their own.

\bigskip

The endpoint estimates we consider are the weak type $(1,1)$, i.e.\ the boundedness from $L^1(\gamma_{-1})$ to $L^{1,\infty}(\gamma_{-1})$, and the boundedness from some atomic Hardy spaces to $L^1(\gamma_{-1})$. Since $\gamma_{-1}$ is non-doubling on Euclidean balls, the measure space $(\R^n, \gamma_{-1})$ endowed with the Euclidean metric $d_{Euc}$ is not a space of homogeneous type in the sense of Coifman and Weiss~\cite{CoifmanWeiss}. Thus, the notion of Hardy space has to be suitably interpreted.

On the one hand, if we consider the metric $\rho$ defined by the length element
\[
\dd s^2=(1+|x|)^2(\dd x_1^2+\dots+\dd x_n^2)
\]
on $\R^n$, the metric measure space $(\R^n, \rho, \gamma_{-1})$ fits in the theory of Carbonaro, Mauceri and Meda~\cite{CMM}. The balls of radius not bigger than 1 with respect to $\rho$, called \emph{admissible}, are (equivalent to) the classical ``hyperbolic'' balls (see~\cite{CMMfinita}). Thus, we obtain a Hardy-type space $H^1(\gamma_{-1})$ as defined in~\cite{CMM} (see also Definition~\ref{def:H1} below) of functions in $L^1(\gamma_{-1})$ that admit a decomposition in terms of classical atoms supported only on admissible balls.

As we shall see, neither the imaginary powers $\As^{iu}$ nor the Riesz transform $\nabla \As^{-1/2}$ are bounded from $H^1(\gamma_{-1})$ to $L^1(\gamma_{-1})$. This is not completely surprising, for it happens in other contexts (e.g.~\cite{MMVSymmetric}). Thus, we consider a smaller Hardy-type space $X^1(\gamma_{-1})$ in the spirit of Mauceri, Meda and Vallarino~\cite{MMVHardy}, defined by $H^1(\gamma_{-1})$-atoms satisfying an additional cancellation condition. Roughly speaking, an $X^1(\gamma_{-1})$-atom is an $H^1(\gamma_{-1})$-atom which is orthogonal to all functions whose image under $\As$ is constant on admissible balls. It turns out that from the space $X^1(\gamma_{-1})$ so defined the imaginary powers are bounded into $L^1(\gamma_{-1})$, but still Riesz transforms are not. This was unexpected, since both the space $X^1$ of~\cite{MMVHardy} adapted to the Laplace-Beltrami operator, and the space $X^1(\gamma)$ adapted to the Ornstein-Uhlenbeck operator and the Gauss measure~\cite{B} \emph{are enough} as endpoint spaces. In addition to this, this failure of boundedness is rather strong: for every $0\leq \lambda<1$, there exists an $X^1(\gamma_{-1})$-atom $a$ such that $\Rs_\lambda a$ is not in $L^1(\gamma_{-1})$.

\smallskip

The techniques introduced by Mauceri, Meda and Vallarino~\cite{MMV} hinge on the fact that the inverse of the Laplace-Beltrami operator preserves the support of $X^1$-atoms, and this is a consequence only of the geometry of the manifold and the definition of $X^1$-atoms. And this is the case also in our setting for $\As$ (see Theorem~\ref{support:pres}). However, if one changes operator into a \emph{translation} of $\As$, say $\As+\lambda I$, to maintain this property of preservation of the support the cancellation condition on the atoms must be changed accordingly. This brings us to the definition of an $X^1_\lambda(\gamma_{-1})$-atom, $\lambda \geq 0$, as an $H^1(\gamma_{-1})$-atom which is orthogonal to all functions whose image under the translation $\As+\lambda I$ is constant. These atoms give rise to new atomic Hardy spaces $X^1_\lambda(\gamma_{-1})$, one for each non-negative translation $\lambda$ of $\As$ (see Definition~\ref{def:X1mu} below).

In this paper, we characterize the boundedness of $(\As+\lambda I)^{iu}$ and $\Rs_\lambda$ from $H^1(\gamma_{-1})$ to $L^1(\gamma_{-1})$ and from $X^1_\mu(\gamma_{-1})$ to $L^1(\gamma_{-1})$, for every $\lambda, \mu \geq0$. In particular, we prove that

\begin{itemize}
\item \emph{the imaginary powers $(\As+\lambda I)^{iu}$ are bounded from $H^1(\gamma_{-1})$ to $L^1(\gamma_{-1})$ if and only if $\lambda>0$. They are bounded from $X^1_\mu(\gamma_{-1})$ to $L^1(\gamma_{-1})$ if and only if either $\lambda=\mu=0$ or $\lambda>0$};
\item \emph{the Riesz transforms $\Rs_\lambda$ are bounded from $H^1(\gamma_{-1})$ to $L^1(\gamma_{-1})$ if and only if $n=1$ and $\lambda>1$. They are bounded from $X^1_\mu(\gamma_{-1})$ to $L^1(\gamma_{-1})$ if and only if either $\lambda=\mu=1$ or $\lambda>1$}.
\end{itemize}
These results are the content of Theorems~\ref{ImpowH1}, \ref{RieszH1}, \ref{teo:X1Impow} and \ref{teo:X1Riesz}, and are summarized in the following tables.
\vspace{-0.1cm}
\renewcommand{\arraystretch}{1.4}
\begin{table}[h]
\centering
\begin{tabular}{|c|c|c|}
\hline
$(\As +\lambda I)^{iu}\colon H^1(\gamma_{-1})\to L^1(\gamma_{-1})$ & \multicolumn{2}{c|}{$ (\As +\lambda I)^{iu}\colon X^1_\mu(\gamma_{-1})\to L^1(\gamma_{-1})$} \\ \hline
\multirow{2}{*}{bounded iff $\lambda >0$}                                 & $\lambda=0$                                       & $\lambda>0$                                              \\ \cline{2-3} 
                                                                                  & bounded iff $\mu=0$                               & bounded $\forall \mu\geq 0$                              \\ \hline
\end{tabular}
\end{table}
\vspace{-0.4cm}
\renewcommand{\arraystretch}{1.5}
\begin{table}[h]

\centering
\begin{tabular}{|c|c|c|c|}
\hline
$\Rs_\lambda \colon H^1(\gamma_{-1})\to L^1(\gamma_{-1}) $                                   & \multicolumn{3}{c|}{$\Rs_\lambda\colon X^1_\mu(\gamma_{-1})\to L^1 (\gamma_{-1})$} \\ \hline
\multirow{2}{*}{\begin{tabular}[c]{@{}c@{}}bounded iff \\ $n=1$ and $\lambda>1$\end{tabular}} & $0\leq \lambda <1$             & $\lambda=1$          & $\lambda\textgreater1$         \\ \cline{2-4} 
                                                                                              & unbounded $\forall \mu\geq 0$  & bounded iff $\mu=1$  & bounded $\forall \mu\geq 0$  \\ \hline
\end{tabular}
\end{table}

It is interesting to notice that both for the imaginary powers and for the Riesz transforms there is a ``critical translation'' (the null translation and the translation equal to $1$, respectively) for which the boundedness $X^1_\mu(\gamma_{-1})\to L^1(\gamma_{-1})$ holds if and only if the translation $\mu$ associated with the atomic space coincides with the translation of the operator. Beyond this threshold, the translation associated with the atomic space plays instead no role. As a consequence of this, moreover, the spaces $X^1_0(\gamma_{-1})$ and $X^1_1(\gamma_{-1})$ turn out to be different from each other and from any other $X^1_\mu(\gamma_{-1})$, $\mu \neq 0,1$. At the moment, we are not able to say the same for every non-negative translation (but see Corollary~\ref{corlnotm}, (2) below). We also observe that the critical translation for the Riesz transforms is exactly the value (or more precisely, the lower bound in terms of the Euclidean metric tensor) of the Bakry-Emery curvature tensor of the weighted manifold $(\R^n,\gamma_{-1})$. This seems to be the analogue of the role played by this curvature on $L^p(\gamma_{-1})$, $1<p<\infty$ (see~\cite{Bakry, CarbDrag}).

The strategy we adopt to prove the boundedness results from $X^1_\mu(\gamma_{-1})$ to $ L^1(\gamma_{-1})$ is strongly related to that of Mauceri, Meda and Vallarino~\cite{MMV}. This is based on the crucial result for which the boundedness of an operator $T$ from their atomic space $X^1$ to $L^1$ is equivalent to the ``uniform boundedness'' of $T$ on $X^1$-atoms~\cite[Proposition 6.3]{MMV-Harmonic}. More precisely, they prove that if
\[
\sup \{ \|T a\|_1 \colon \, \mbox{$a$ is an $X^1$-atom}\}<\infty
\]
then $T$ is bounded from $X^1$ to $L^1$ (the converse is always true). This result is based on the isomorphy of the duals of $X^1$ and $X^1_{\fin}$, the space of \emph{finite} linear combinations of $X^1$-atoms. In our case, we do not have such a powerful result for we do not have a characterization of the dual space yet. Therefore, we do not know whether the uniform boundedness of an operator on $X^1_\mu(\gamma_{-1})$-atoms (in the sense above) is enough to conclude its boundedness from $X^1_\mu(\gamma_{-1})$ to $L^1(\gamma_{-1})$.

Nevertheless, by a simple and classical argument (see e.g.~\cite[p.\ 95]{Grafakos}), an operator which is both uniformly bounded on certain atoms \emph{and of weak type $(1,1)$}, extends to a bounded operator from the whole corresponding atomic space to $L^1$. Thus, the weak type $(1,1)$ of the translated Riesz transforms and the imaginary powers associated with $\As$ is not only interesting on its own, but turns out to be somewhat necessary to our proof.  For $\lambda \geq0$ and $u\in \R\setminus \{0\}$, we prove that
\begin{itemize}
\item \emph{the imaginary powers $(\As+\lambda I)^{iu}$ are of weak type $(1,1)$ for every $\lambda\geq 0$};
\item \emph{the Riesz transforms $\Rs_\lambda$ are of weak type $(1,1)$ for every $\lambda \geq 1$}.
\end{itemize}
This is the content of Theorem~\ref{teo:weaktype}.

Since the measure $\gamma_{-1}$ is locally doubling on admissible balls but it is not globally doubling, we follow the classical procedure of splitting the operators in their local and global parts, and prove the weak type $(1,1)$ of each part separately. By means of the local doubling condition, their local part can be traced back to the classical Calder\'on-Zygmund theory, which only involves certain estimates of the kernels of these operators and their first derivatives. As for the global parts, our strategy relies on a new proof of the weak type $(1,1)$ of the Mehler maximal operator of $\As$ (whose first proof is due to Salogni~\cite{Salogni}) which is inspired by a new proof of the weak type $(1,1)$ of the Mehler maximal operator of the Ornstein-Uhlenbeck operator given by the author in~\cite{B}. By this, we can prove that in the global region a kernel of an operator of weak type $(1,1)$, related to the Mehler maximal kernel, controls both those of the imaginary powers and those of the Riesz transforms associated with a translation not smaller than 1.

\subsection*{Structure of the paper and notation}
In Section~\ref{Sec:prel} we prove some preliminary results which will be used throughout the paper, and obtain the Schwartz kernels of the operators $(\As+\lambda I)^{iu}$ and $\Rs_\lambda$. In Section~\ref{Sec:HardyOUrovesciato} we define the atomic Hardy spaces $H^1(\gamma_{-1})$ and $X^1_\lambda(\gamma_{-1})$, $\lambda\geq 0$, and investigate some properties which will be of use thereafter. In particular, Subsection~\ref{subsec:Ex} contains some fundamental classes of functions. In Section~\ref{Sec:weakOUrovesciato} we prove the weak type $(1,1)$ of the imaginary powers and the Riesz transforms as explained above. In Section~\ref{BoundednessH1OUrovesciato}, we prove the mentioned boundedness results from $H^1(\gamma_{-1})$ to $L^1(\gamma_{-1})$, while the boundedness results from $X^1_\mu(\gamma_{-1})$ to $L^1(\gamma_{-1})$ are proved in Section~\ref{BoundednessX1OUrovesciato}.

\smallskip

If $\nu$ is a positive measure on $\R^n$ and $1\leq p<\infty$, we denote by $L^p(\nu)$ the space of (equivalence classes of) measurable functions $f$ on $\R^n$ such that $|f|^p$ is integrable with respect to $\nu$, with the usual norm which will be denoted by $\|f\|_{L^p(\nu)}$. The space $L^\infty(\nu)$ will be the space of measurable functions which are essentially bounded with respect to $\nu$. If $p=2$, $L^2(\nu)$ is a Hilbert space and its scalar product will be denoted by $( \cdot\,, \cdot)_{L^2(\nu)}$. When there is no risk of confusion, we will write simply $L^p$, $\|f\|_p$ and $( \cdot\, ,\, \cdot)$.

The Lebesgue measure will be denoted by $\dd x$, and with a slight abuse of notation we will denote with $\gamma_{-1}$ both the function $x\mapsto \pi^{n/2} e^{|x|^2}$ on $\R^n$ and the measure with density $\gamma_{-1}$ with respect to $\dd x$. The measure of a set $E\subset \R^n$ with respect to $\dd x$ and $\gamma_{-1}$ will be denoted by $|E|$ and $\gamma_{-1}(E)$ respectively. Given a bounded operator $T$ on $L^2(\gamma_{-1})$, we denote by $K_T$ the Schwartz kernel of $T$ and by $k_T$ the kernel of $T$ with respect to the measure $\gamma_{-1}$. In other words
\begin{equation}\label{rulekernel}
Tf(x)= \int_{\R^n} K_T(x,y) f(y)\, \dd y = \int_{\R^n} k_T(x,y) f(y)\gamma_{-1}(y)\, \dd y.
\end{equation}
In the paper we shall use \emph{Euclidean} balls. All throughout the paper, we shall use the letters $c$ and $C$ to denote constants, not necessarily equal at different occurrences. For any quantity $A$ and $B$, we write $A\lesssim B$ by meaning that there exists a constant $c>0$ such that $A\leq c \,B$. If $A\lesssim B$ and $B\lesssim A$, we write $A\approx B$ .

\section{Preliminaries}\label{Sec:prel}
Denote with $\gamma$ both the Gauss function $\gamma\coloneqq 1/\gamma_{-1}$ and the Gauss measure whose density with respect to $\dd x$ is $\gamma$. Let $\Ls$ be the Ornstein-Uhlenbeck~operator, i.e.\ the closure on $L^2(\gamma)$ of the operator
\[\mathcal{L}_0f(x)\coloneqq -\frac{1}{2}\Delta f(x) + x\cdot \nabla f(x), \qquad f\in C_c^\infty.\]
It is well-known that $\Ls$ is self-adjoint. We recall that its $L^2(\gamma)$-spectrum is the set of nonnegative integers $\{0,1,\dots\}$. We refer the reader to~\cite{Sjogren} for a detailed introduction. 

Let $U:L^2(\gamma)\to L^2(\gamma_{-1})$ be the isometry
\[Uf=f\gamma \qquad \forall f\in L^2(\gamma).\]
An easy computation shows that for every $f\in C_c^\infty(\R^n)$
\begin{equation}\label{equivOU}
\As f= U(\Ls +nI)U^{-1}f
\end{equation}
and since the isometry $U$ preserves the space of test functions, the operators $\Ls+n I$ and $\As$ are unitarily equivalent (see~\cite{Salogni}). Therefore, $\As$ has spectral gap equal to $n$, and its $L^2(\gamma_{-1})$-spectrum is the set of positive integers $\{n,\, n+1,\, n+2,\, \dots\}$. 

From the unitary equivalence~\eqref{equivOU}, it is possible to obtain the Mehler kernel of the semigroup $(e^{-t\mathcal{A}})_{t>0}$ from that of $e^{-t\Ls}$. This was performed in~\cite{Salogni}, where it is proved that the Mehler kernel with respect to the measure $\gamma_{-1}$ is
\begin{equation}\label{ht}
h_t(x,y)= \frac{e^{-nt}}{\pi^n(1-e^{-2t})^{n/2}} \exp \bigg( -\frac{|x+y|^2}{2(1+e^{-t})} - \frac{|x-y|^2}{2(1-e^{-t})}\bigg).
\end{equation}
In other words,
\[e^{-t\mathcal{A}} f(x) = \int_{\R^n} h_t(x,y)f(y)\gamma_{-1}(y)\, \dd y\]
for every $t>0$. If we denote with $H_t(x,y)\coloneqq h_t (x,y) \gamma_{-1}(y)$ the Mehler kernel of $e^{-t\mathcal{A}}$ with respect to the Lebesgue measure,
\[H_t(x,y) = \frac{e^{-nt}}{\pi^{n/2}(1-e^{-2t})^{n/2}} e^{-\frac{|x-e^{-t}y|^2}{1-e^{-2t}}}.\]
For $1\leq p<\infty$, denote by $\As_p$ the operator $\As$ (in the distributional sense) on $\Dom(\As_p)\coloneqq \{f\in L^p(\gamma_{-1})\colon\, \As f\in L^p(\gamma_{-1})\}$, and by $\sigma_p(\As)$ its $L^p(\gamma_{-1})$-spectrum. Then, we have the following result.

\begin{proposition}\label{propspectrum}
For every $1<p<\infty$, $\sigma_p(\As)=\{n,\, n+1,\, n+2,\, \dots\}$, while $\sigma_1(\As)=\{z\in \C \colon \, \Re z \geq 0\}$.
\end{proposition}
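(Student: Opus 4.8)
The plan is to exploit the unitary equivalence $\As = U(\Ls + nI)U^{-1}$ established in~\eqref{equivOU}, but observe immediately that $U$ is an isometry only on $L^2$, so it does not directly transfer $L^p$-spectral information for $p \neq 2$. Instead I would work with the explicit Mehler kernel~\eqref{ht}. For $1 < p < \infty$ the statement $\sigma_p(\As) = \{n, n+1, \dots\}$ should follow from a standard holomorphy/consistency argument: the semigroup $(e^{-t\As})_{t>0}$ is a consistent family of operators on $L^p(\gamma_{-1})$ for all $p$, so one shows it is analytic and uniformly bounded on a sector on each $L^p(\gamma_{-1})$, $1 < p < \infty$ (using the contractivity on $L^2$ together with hypercontractivity-type estimates inherited from the Ornstein--Uhlenbeck setting via the substitution $f \mapsto f\gamma$, or a direct Schur-type estimate on $H_t$). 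Since the resolvent set is open and the $L^p$-resolvents agree on the dense set $L^p \cap L^2$ where they are simultaneously defined, one gets that $\sigma_p(\As)$ is independent of $p$ on $(1,\infty)$ by the Riesz--Thorin/Stein interpolation of the resolvent together with the fact that $\sigma_p$ can only grow under interpolation in a controlled way; combined with $\sigma_2(\As) = \{n, n+1, \dots\}$ this pins down $\sigma_p(\As)$ for all $p \in (1,\infty)$. Alternatively, and perhaps more cleanly, one transfers the known $L^p(\gamma)$-spectrum of $\Ls$ (which is $\{0,1,2,\dots\}$ for all $1 < p < \infty$, a classical fact; see~\cite{Sjogren}) through the map $f \mapsto f\gamma$, checking that this map is bounded $L^p(\gamma) \to L^p(\gamma_{-1})$ with bounded inverse when — and this is the point to verify — one accounts for the weight change; in fact $\|f\gamma\|_{L^p(\gamma_{-1})}^p = \int |f|^p \gamma^p \gamma_{-1}\,dx = \int |f|^p \gamma^{p-1}\,dx$, which is \emph{not} comparable to $\|f\|_{L^p(\gamma)}^p$ for $p \neq 2$, so this naive transference fails and one is forced back onto the kernel estimates.

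For the case $p = 1$ the claim is that $\sigma_1(\As) = \{z \in \C : \Re z \geq 0\}$, i.e.\ the full closed right half-plane. The inclusion $\sigma_1(\As) \subseteq \{\Re z \geq 0\}$ is automatic since $\As_1$ generates a contraction (in fact positive, Markovian) semigroup on $L^1(\gamma_{-1})$: indeed $e^{-t\As}$ has a nonnegative kernel $h_t$ and $\int h_t(x,y)\,\gamma_{-1}(y)\,dy$ can be computed — one should check it equals $1$ or at least is bounded, making $e^{-t\As}$ sub-Markovian — hence $\|e^{-t\As}\|_{L^1 \to L^1} \leq 1$ and the generator has spectrum in the closed right half-plane. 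The substantive direction is the reverse inclusion: every $z$ with $\Re z \geq 0$ lies in $\sigma_1(\As)$. The standard mechanism here (going back to the Ornstein--Uhlenbeck analysis) is that the $L^1$-spectrum of such a drift operator is much larger than the $L^2$-spectrum because of the exponential growth of the underlying measure. Concretely, I would construct approximate eigenfunctions: for each $z$ with $\Re z > 0$, exhibit a family $(f_\eps)$, normalized in $L^1(\gamma_{-1})$, with $\|(\As - z)f_\eps\|_{L^1(\gamma_{-1})} \to 0$. Natural candidates are Gaussian-type bumps $f_\eps(x) = c_\eps \exp(-\delta_\eps |x - x_\eps|^2)$ with parameters tuned so that $\As f_\eps \approx z f_\eps$ in $L^1$: applying $\As_0 = -\tfrac12\Delta - x\cdot\nabla$ to such a function and balancing the quadratic-in-$x$ terms against $z$ requires letting the centre $x_\eps \to \infty$ and the width shrink appropriately; the residual terms are then controlled in $L^1(\gamma_{-1})$ because the measure density $e^{|x|^2}$ is handled by the decay of the bump. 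Then since $\sigma_1$ is closed, the open right half-plane being contained in it forces the closed right half-plane in as well.

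The main obstacle I expect is the $p=1$ lower bound: constructing the approximate eigenfunctions and, crucially, controlling the error terms in the $\gamma_{-1}$-weighted $L^1$ norm, where the exponentially growing weight interacts delicately with the translation $x_\eps \to \infty$ and the shrinking width. One must choose the scaling of width versus centre-location so that (a) the leading part of $\As f_\eps$ genuinely reproduces $z f_\eps$, (b) the lower-order and error terms are $o(1)$ after integrating against $\pi^{n/2}e^{|x|^2}\,dx$, and (c) $f_\eps$ stays in $\Dom(\As_1)$. A cleaner route, if available in the literature, is to invoke a general principle on the $L^1$-spectrum of generators of sub-Markovian semigroups with a spectral gap on $L^2$ but non-doubling invariant measure — such results exist for the Ornstein--Uhlenbeck operator and should transfer, with the kernel~\eqref{ht} making the estimates explicit; this is likely what the author does, perhaps citing~\cite{Salogni} or the Ornstein--Uhlenbeck literature and carrying out the analogue. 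The $1<p<\infty$ part, by contrast, I expect to be routine given the explicit kernel and the $L^2$ result.
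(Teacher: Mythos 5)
Your sketch circles some of the right ideas but misses the key technical tools the paper actually uses, and as written neither half constitutes a proof.

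For $1<p<\infty$ the paper invokes a single general theorem: a symmetric Markovian semigroup has $L^p$-spectrum equal to its $L^2$-spectrum for every $1<p<\infty$ (Davies, \emph{Heat Kernel and Spectral Theory}, Theorem 1.6.3). Since $e^{-t\As}$ is Markovian (recorded in Salogni's thesis), $\sigma_p(\As)=\sigma_2(\As)=\{n,n+1,\dots\}$ at once. Your ``consistency plus analyticity plus interpolation of resolvents'' discussion gestures at $p$-independence but is not a proof; in particular interpolation of resolvents does not directly yield equality of spectra, and the alternative route you float via hypercontractivity or transference through $f\mapsto f\gamma$ you yourself correctly dismiss. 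The Markovian-semigroup theorem is precisely the missing tool.

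For $p=1$ the gap is more serious: you identify that $f\mapsto f\gamma$ fails to be isometric $L^p(\gamma)\to L^p(\gamma_{-1})$ for $p\neq 2$ and conclude one must construct approximate eigenfunctions from scratch. But you overlook that the map $Vf=f\gamma_{-1}$ \emph{is} an isometry from $L^1(\gamma_{-1})$ onto $L^1(\dd x)$ — plain Lebesgue measure — since $\|f\gamma_{-1}\|_{L^1(\dd x)}=\int |f|\,\gamma_{-1}\,\dd x=\|f\|_{L^1(\gamma_{-1})}$. A direct computation (parallel to the $L^2$ computation behind \eqref{equivOU}) gives $V\As_1 V^{-1}=\Ls_1+nI$ with $\Ls_1$ the Ornstein--Uhlenbeck operator on $L^1(\dd x)$, and Metafune (\emph{$L^p$-spectrum of Ornstein--Uhlenbeck operators}, Theorem 4.12) identifies $\sigma_{L^1(\dd x)}(\Ls_1)=\{\Re z\geq -n\}$. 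Shifting by $n$ gives $\sigma_1(\As)=\{\Re z\geq 0\}$ immediately. Your alternative of building Gaussian-bump approximate eigenfunctions with drifting centres is in principle viable — it is essentially how Metafune's result is proved — but it is a substantial piece of analysis and, as you yourself flag, the estimates in the $e^{|x|^2}\,\dd x$-weighted $L^1$ norm are delicate; as written it is a program, not a proof, and the isometric conjugation to $L^1(\dd x)$ renders it unnecessary.
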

\begin{proof}
Since $e^{-t\As}$ is Markovian (cf.~\cite{Salogni}), $\sigma_p(\As)= \sigma_2(\As)$ for every $1<p<\infty$ by~\cite[Theorem 1.6.3]{Davies1}. If $p=1$, consider the isometry $V\colon L^1(\gamma_{-1}) \to L^1(\dd x)$ given by $Vf=f\gamma_{-1}$. Then
\[V\As_1 V^{-1} \colon L^1(\dd x)\to L^1(\dd x),\quad V\As_1 V^{-1} = \mathcal{L}_1 +nI,\]
where $\mathcal{L}_1$ is the Ornstein-Uhlenbeck operator on $L^1(\dd x)$. By~\cite[Theorem 4.12]{Metafune}, the $L^1(\dd x)$-spectrum of $\mathcal{L}_1$ is the half-plane $\{z\in \C \colon \, \Re z \geq - n\}$. Therefore the $L^1(\dd x)$-spectrum of $\mathcal{L}_1+nI$ is the half-plane $\{z\in \C \colon \, \Re z \geq 0\}$, and thus this is the $L^1(\gamma_{-1})$-spectrum of $\As_1$.
\end{proof}

\subsection{Kernels of integral operators}
Let $\lambda \geq 0$, and observe that \[\sigma_2(\As+\lambda I)= \{n+\lambda,\, n+\lambda +1,\, \dots \}\]
by Proposition~\ref{propspectrum}. Denote by $(\Pc_k)$, $k=n,\, n+1,\, \dots$ the spectral resolution of $\As$. For every $z\in \C$, define the operator $(\As+\lambda I)^z$ via the spectral theorem on $L^2(\gamma_{-1})$ as
\begin{multline*}
(\As+\lambda I)^z= \sum_{k=n}^\infty (k+\lambda)^z \mathcal{P}_k,\\ \Dom((\As+\lambda I)^z)=\Big\{f\in L^2(\gamma_{-1})\colon \sum_{k=n}^\infty (k+\lambda)^{2 \Re z} \|\mathcal{P}_k f\|_{L^2(\gamma_{-1})}^2<\infty\Big\}.
\end{multline*}
If $\Re z \leq 0$, $(\As+\lambda I)^z$ is bounded on $L^2(\gamma_{-1})$ and $\Dom((\As+\lambda I)^z)= L^2(\gamma_{-1})$. If $\Re z \geq 0$, observe that $C_c^\infty \subset \Dom ((\As+\lambda I)^z)$ since by the spectral theorem $(\As+\lambda I)^z=(\As+\lambda I)^{z-N} \,(\As+\lambda I)^N $, where $N=[\Re\,z]+1$.\par

For every $x \neq y$ define
\[
K_z^\lambda(x,y)= \frac{1}{\Gamma(-z)} \int_0^\infty e^{-\lambda t} t^{-z-1} H_t(x,y)\, \dd t.
\]
\begin{remark}
If $z\in \N$, $1/\Gamma(-z)=0$ and hence $K_z^\lambda(x,y)= 0$ for every $x\neq y$.
\end{remark}
For $s>0$, let
\begin{equation}\label{Ns_prima}
N_s \coloneqq \{(x,y)\in \R^n\times \R^n \colon |x-y|\leq s/(1+|x|+|y|) \}.
\end{equation}

\begin{lemma}\label{lemmaKz}
Let $\lambda \geq 0$, $z\in \C$, and $s>0$. For every $(x,y)\in N_s$, $x\neq y$,
\[
\int_0^\infty e^{-\lambda t} t^{-\Re z -1} H_t(x,y)\, \dd t\leq C(s,z,\lambda)\frac{1}{|x-y|^{n+2 \Re z}}.
\]
In particular, for every $x\neq y$ the integral defining $K_z^\lambda(x,y)$ is absolutely convergent.
\end{lemma}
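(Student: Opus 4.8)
The plan is to estimate the integral by splitting the range of $t$ at the scale $t=1$, which separates the behavior of the Mehler heat kernel for small times (where it resembles the Euclidean heat kernel) from its behavior for large times (where the exponential decay $e^{-nt}$ and the Gaussian-in-$y$ factor dominate). On $N_s$ the key geometric fact is that $x$ and $y$ are close in Euclidean distance relative to their size, so several elementary substitutions become available.

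First I would recall the Lebesgue-measure form of the kernel,
\[
H_t(x,y)=\frac{e^{-nt}}{\pi^{n/2}(1-e^{-2t})^{n/2}}\,e^{-\frac{|x-e^{-t}y|^2}{1-e^{-2t}}},
\]
and treat the small-time part $\int_0^1 e^{-\lambda t}t^{-\Re z-1}H_t(x,y)\,\dd t$ first. For $0<t\leq1$ we have $1-e^{-2t}\approx t$, and $|x-e^{-t}y|^2=|x-y+(1-e^{-t})y|^2$, so $|x-e^{-t}y|\geq |x-y|-(1-e^{-t})|y|\gtrsim |x-y|-t|y|$. Using $(x,y)\in N_s$, i.e.\ $|x-y|\leq s/(1+|x|+|y|)$, one checks that on a further subdivision (comparing $t|y|$ with $|x-y|$) the exponent is bounded below by a constant multiple of $|x-y|^2/t$; in the complementary range $t|y|\gtrsim|x-y|$ one instead keeps the factor $e^{-\lambda t}$ or the size of the domain to absorb things. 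Then the standard bound $t^{-n/2-\Re z-1}e^{-c|x-y|^2/t}\lesssim |x-y|^{-n-2\Re z}\,t^{-1}e^{-c'|x-y|^2/t}$ followed by integrating $\int_0^\infty t^{-1}e^{-c'|x-y|^2/t}\,\dd t$ — which converges at the top but not the bottom, so one must be a little more careful and use a fixed positive power of $t$ to spare — yields a bound of the form $C(s,z,\lambda)|x-y|^{-n-2\Re z}$. Concretely I would write $t^{-n/2-\Re z-1}e^{-c|x-y|^2/t}=|x-y|^{-n-2\Re z}\,(|x-y|^2/t)^{n/2+\Re z+1}t^{-1}e^{-c|x-y|^2/t}$ and bound $u^{n/2+\Re z+1}e^{-cu}\lesssim e^{-(c/2)u}$ for $u=|x-y|^2/t>0$, then integrate $\int_0^\infty e^{-(c/2)|x-y|^2/t}\,t^{-1}\,\dd t$; this last integral diverges, so the cleaner route is to integrate $\int_0^1 t^{-n/2-\Re z-1}e^{-c|x-y|^2/t}\dd t\leq |x-y|^{-n-2\Re z}\int_{|x-y|^2}^\infty v^{n/2+\Re z-1}e^{-cv}\,\dd v\leq C\,|x-y|^{-n-2\Re z}$ after the substitution $v=|x-y|^2/t$.

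For the large-time part $\int_1^\infty e^{-\lambda t}t^{-\Re z-1}H_t(x,y)\,\dd t$, on $t\geq1$ we have $1-e^{-2t}\approx 1$ and $H_t(x,y)\lesssim e^{-nt}e^{-|x-e^{-t}y|^2}\leq e^{-nt}$, so the integral is dominated by $\int_1^\infty e^{-(n+\lambda)t}t^{-\Re z-1}\dd t\leq C(z,\lambda)$, a finite constant; and since $(x,y)\in N_s$ forces $|x-y|\leq s$, in particular $|x-y|^{-n-2\Re z}\gtrsim s^{-n-2\Re z}$ when $n+2\Re z\geq 0$, and when $n+2\Re z<0$ one absorbs $|x-y|^{-n-2\Re z}$ into the constant since $|x-y|$ is bounded — so the large-time contribution is also $\leq C(s,z,\lambda)|x-y|^{-n-2\Re z}$. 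Adding the two pieces gives the claim, and the final "in particular" is immediate: the same estimate with $\Re z$ in place of $z$ shows $\int_0^\infty e^{-\lambda t}t^{-\Re z-1}H_t(x,y)\,\dd t<\infty$, so the integrand defining $K_z^\lambda(x,y)$ is absolutely integrable.

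The main obstacle is the bookkeeping in the small-time regime: the naive lower bound $|x-e^{-t}y|\gtrsim|x-y|$ fails when $t|y|$ is comparable to or larger than $|x-y|$, so one genuinely needs to exploit the defining inequality of $N_s$ (which bounds $|y|$ in terms of $|x-y|$) and possibly to split the $t$-integral at $t\approx|x-y|/|y|$ or to complete the square differently, keeping track of how the constant depends on $s$. Everything else is routine Gaussian and $\Gamma$-function estimation.
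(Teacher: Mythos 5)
Your plan (split at $t=1$, use the $N_s$ condition to control the Gaussian factor, substitute $v=|x-y|^2/t$ on the small-time piece, bound the large-time piece by a constant) is the same decomposition the paper uses, and your small-time substitution $\int_0^1 t^{-n/2-\Re z-1}e^{-c|x-y|^2/t}\,\dd t=|x-y|^{-n-2\Re z}\int_{|x-y|^2}^\infty v^{n/2+\Re z-1}e^{-cv}\,\dd v$ is exactly what you need. Where you diverge is in handling $|x-e^{-t}y|$: you propose a further subdivision according to whether $t|y|$ is small or large compared with $|x-y|$, and you flag this bookkeeping as the main obstacle. The paper avoids this entirely with a single inequality, valid for \emph{all} $t>0$ simultaneously: writing $x-e^{-t}y=(x-y)+(1-e^{-t})y$ gives
\[
|x-e^{-t}y|^2\ \geq\ |x-y|^2-2(1-e^{-t})|y||x-y|\ \geq\ |x-y|^2-2s(1-e^{-t}),
\]
the last step because $|y||x-y|\leq s$ on $N_s$. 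Dividing by $1-e^{-2t}$ and using $\tfrac{1-e^{-t}}{1-e^{-2t}}=\tfrac{1}{1+e^{-t}}\leq 1$ yields
\[
e^{-\frac{|x-e^{-t}y|^2}{1-e^{-2t}}}\ \leq\ e^{2s}\,e^{-\frac{|x-y|^2}{1-e^{-2t}}},
\]
a uniform estimate that removes the cross-term before the $t$-split; the subdivision you anticipate is unnecessary. After that, the two pieces are handled exactly as you describe. One small additional point: your first attempt — bounding $u^{n/2+\Re z+1}e^{-cu}\lesssim e^{-cu/2}$ and then integrating $\int_0^\infty t^{-1}e^{-c'|x-y|^2/t}\,\dd t$ — indeed diverges, and you correctly discard it for the substitution $v=|x-y|^2/t$; that second route is the one the paper uses too. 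So: correct approach, same skeleton, but replace the case analysis in the small-time estimate by the one-line bound above.
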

\begin{proof} Let $\lambda \geq 0$, $z\in \C$, $s>0$ and $(x,y)\in N_s$, $x\neq y$. Then 
\begin{align*}
|x-e^{-t}y|^2 \geq |x-y|^2 -2(1-e^{-t})|y||x-y|\geq |x-y|^2 -2s(1-e^{-t}),
\end{align*}
which yields
\[
\int_0^\infty e^{-\lambda t} t^{-\Re z -1} H_t(x,y)\, \dd t \leq C e^{cs}\int_0^\infty t^{-\Re z -1} e^{-(n+\lambda )t}\frac{1}{(1-e^{-2t})^{n/2}} e^{-\frac{|x-y|^2}{1-e^{-2t}}}\, \dd t.
\]
We split the latter integral into the sum of the integrals on $(0,1)$ and $(1,\infty)$. Then
\begin{align*}
\int_0^1 t^{-\Re z -1} e^{-(n+\lambda)t}\frac{1}{(1-e^{-2t})^{n/2}} e^{-\frac{|x-y|^2}{1-e^{-2t}}}\, \dd t& \leq C \int_0^1 t^{-\Re z -1 -\frac{n}{2}} e^{-c\frac{|x-y|^2}{t}}\, \dd t \\& \leq C(z) |x-y|^{-n- 2 \Re z}
\end{align*}
while
\[
\int_1^\infty t^{-\Re z -1} e^{-(n+\lambda) t}\frac{1}{(1-e^{-2t})^{n/2}} e^{-\frac{|x-y|^2}{1-e^{-2t}}}\, \dd t \leq C e^{-|x-y|^2} \int_1^\infty t^{-\Re z -1} e^{-(n+\lambda)t} \, \dd t
\]
from which the stated estimate follows.
\end{proof}
The following proposition shows that $K_{-z}^\lambda$ is the kernel of $(\As+\lambda I)^{-z}$ if $\Re\, z>0$. 
\begin{proposition}\label{Prop:negativeline}
Let $\lambda \geq 0$. For every $z\in\C$ with $\Re\, z>0$
\[
(\As+\lambda I)^{-z}=\frac{1}{\Gamma(z)}\int_0^\infty e^{-\lambda t}t^{z-1} e^{-t\As} \, \dd t
\]
where the integral converges in the weak operator topology of $L^2(\gamma_{-1})$. Moreover, 
\[
(\As+\lambda I)^{-z}f(x)= \int_{\R^n} K_{-z}^\lambda(x,y)f(y)\, \dd y
\]
for every $f\in C^\infty_c(\R^n)$ and for almost every $x\in \R^n$.
\end{proposition}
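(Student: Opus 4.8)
The plan is to establish the subordination formula first in the weak operator topology on $L^2(\gamma_{-1})$ by spectral calculus, and then identify the Schwartz kernel by interchanging integrals and using Lemma~\ref{lemmaKz} together with a local argument near the diagonal. For the first part, fix $z\in\C$ with $\Re z>0$. For each $k\geq n$ and each $\lambda\geq 0$ one has the elementary scalar identity
\[
(k+\lambda)^{-z}=\frac{1}{\Gamma(z)}\int_0^\infty e^{-\lambda t} t^{z-1} e^{-kt}\,\dd t,
\]
valid since $k+\lambda\geq n>0$. Given $f,g\in L^2(\gamma_{-1})$, I would write $(e^{-t\As}f,g)=\sum_{k=n}^\infty e^{-kt}(\Pc_k f,g)$, multiply by $e^{-\lambda t}t^{z-1}/\Gamma(z)$ and integrate in $t$; Tonelli applied to the series of absolute values (using $\sum_k \|\Pc_k f\|\,\|\Pc_k g\|<\infty$ by Cauchy--Schwarz, and $\int_0^\infty e^{-\lambda t}t^{\Re z-1}e^{-kt}\,\dd t=\Gamma(\Re z)(k+\lambda)^{-\Re z}\leq \Gamma(\Re z) n^{-\Re z}$) justifies the interchange, giving
\[
\frac{1}{\Gamma(z)}\int_0^\infty e^{-\lambda t}t^{z-1}(e^{-t\As}f,g)\,\dd t=\sum_{k=n}^\infty (k+\lambda)^{-z}(\Pc_k f,g)=((\As+\lambda I)^{-z}f,g).
\]
This is exactly the asserted convergence in the weak operator topology.

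For the kernel identification, fix $f\in C_c^\infty(\R^n)$. Since $e^{-t\As}f(x)=\int_{\R^n} H_t(x,y)f(y)\,\dd y$, the plan is to substitute this into the subordination formula and apply Fubini to move the $t$-integral inside. The obstacle is that the pointwise bound from Lemma~\ref{lemmaKz} controls $\int_0^\infty e^{-\lambda t}t^{\Re z-1}H_t(x,y)\,\dd t$ only on the admissible region $N_s$; away from the diagonal one needs a complementary estimate. For $(x,y)\notin N_s$ with $x,y$ in the support region of interest, one has $|x-e^{-t}y|^2\geq |x-y|^2 - 2(1-e^{-t})|y||x-y|$, and since on $\R^n\setminus N_s$ the quantity $|x-y|(1+|x|+|y|)>s$, a short computation (splitting again at $t=1$: for $t\geq 1$ use the $e^{-nt}$ decay and the Gaussian factor $e^{-c|x-y|^2}$, for $t<1$ use $|x-e^{-t}y|^2\gtrsim t^2(1+|y|)^2+|x-y|^2$ hence $e^{-|x-e^{-t}y|^2/(1-e^{-2t})}\lesssim e^{-ct(1+|y|)^2}e^{-c|x-y|^2/t}$) shows $\int_0^\infty e^{-\lambda t}t^{\Re z-1}H_t(x,y)\,\dd t$ is finite and, integrated in $y$ over the compact support of $f$, yields $\int_{\R^n}|f(y)|\int_0^\infty e^{-\lambda t}t^{\Re z-1}H_t(x,y)\,\dd t\,\dd y<\infty$ for a.e.\ (indeed every) $x$. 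Combining this with the $N_s$-estimate $\int\!\!\int_{N_s}|f(y)|\,|x-y|^{-n-2\Re z}\,\dd y<\infty$ (integrable near the diagonal because $n+2\Re z<$ is compensated only if $\Re z$ is small — so here I would instead note $f$ is bounded and the diagonal singularity $|x-y|^{-n-2\Re z}$ is integrable in $y$ precisely when $2\Re z<0$, which fails; the correct route is to keep the $t$-integral and integrate in $t$ \emph{after} integrating $\int_{\R^n}H_t(x,y)f(y)\,\dd y=e^{-t\As}f(x)$ in $y$ and using $\|e^{-t\As}f\|_\infty\lesssim \|f\|_\infty$ together with $\|e^{-t\As}f\|_\infty\lesssim e^{-nt}\|f\|_1 t^{-n/2}$ for small $t$, so that $\int_0^\infty e^{-\lambda t}t^{\Re z-1}|e^{-t\As}f(x)|\,\dd t<\infty$ for every $x$), which legitimizes Fubini.

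Thus for every $f\in C_c^\infty$ and a.e.\ $x$,
\[
(\As+\lambda I)^{-z}f(x)=\frac{1}{\Gamma(z)}\int_0^\infty e^{-\lambda t}t^{z-1}\!\!\int_{\R^n} H_t(x,y)f(y)\,\dd y\,\dd t=\int_{\R^n}\Big(\frac{1}{\Gamma(z)}\int_0^\infty e^{-\lambda t}t^{z-1}H_t(x,y)\,\dd t\Big)f(y)\,\dd y,
\]
and the inner $t$-integral is $K_{-z}^\lambda(x,y)$ by definition (absolute convergence of the defining integral for each $x\neq y$ is Lemma~\ref{lemmaKz}, applied on a neighborhood $N_s$ of the diagonal, together with the off-diagonal estimate just sketched). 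The one subtlety to record is that the weak-operator-topology identity of the first part gives equality of $(\As+\lambda I)^{-z}f$ with the subordination integral as $L^2(\gamma_{-1})$-functions; evaluating both against arbitrary $g\in C_c^\infty$ and using the Fubini computation on the right-hand side upgrades this to the stated pointwise (a.e.) kernel representation. I expect the main obstacle to be precisely the uniform-in-$x$ integrability of $\int_0^\infty e^{-\lambda t}t^{\Re z-1}H_t(x,y)\,\dd t$ against $f(y)\,\dd y$ in the global regime $(x,y)\notin N_s$, which is where the specific Gaussian structure of $H_t$ (and the spectral gap, giving the $e^{-nt}$ factor) must be exploited.
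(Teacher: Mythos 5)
Your first part (the weak operator topology identity) matches the paper's argument essentially verbatim: scalar Gamma-integral identity for $(k+\lambda)^{-z}$, Cauchy--Schwarz on $\sum_k|(\Pc_k f,\Pc_kg)|$ to justify the interchange, done.

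In the second part you run into trouble that stems from a sign error. Lemma~\ref{lemmaKz} is stated for $K_z^\lambda$ with exponent $n+2\Re z$; to bound $K_{-z}^\lambda$ (which is what you need when $\Re z>0$) you must substitute $z\mapsto -z$, giving
\[
\int_0^\infty e^{-\lambda t}t^{\Re z-1}H_t(x,y)\,\dd t\le C\,|x-y|^{-(n-2\Re z)}\qquad\text{on }N_s,
\]
not $|x-y|^{-(n+2\Re z)}$. Since $n-2\Re z<n$, the singularity \emph{is} locally integrable in $y$ precisely because $\Re z>0$, contrary to your worry. This is exactly what the paper uses: on the compact set $\supp(f)\times\supp(g)$ the inner double integral is absolutely convergent (near the diagonal by the corrected Lemma~\ref{lemmaKz} bound, away from it by boundedness of the Gaussian factor), and Fubini applies directly. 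Your improvised workaround via $\|e^{-t\As}f\|_\infty\lesssim\|f\|_\infty$ together with $\|e^{-t\As}f\|_\infty\lesssim e^{-nt}(1\wedge t)^{-n/2}\|f\|_1$ \emph{does} produce a correct Fubini justification once the details are filled in (the first bound handles $t\le1$ using $\Re z>0$; the second supplies the $e^{-nt}$ decay needed for $t\ge 1$, which is essential when $\lambda=0$ --- you label it as a small-$t$ bound, which is backward), so the route you fall back on is salvageable and arguably self-contained, but it was reached by an unnecessary detour. With the sign corrected, the paper's shorter argument via Lemma~\ref{lemmaKz} is the one intended, and you should be aware that it works as stated.
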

\begin{proof}
Let $f,g\in L^2(\gamma_{-1})$. Then
\begin{align*}
((\As+\lambda I)^{-z}f,g)&= \sum_{k=n}^\infty (k+\lambda)^{-z}(\Pc_k f,g)\\
&= \frac{1}{\Gamma(z)}\sum_{k=n}^\infty \int_0^\infty t^{z-1} e^{-(k+\lambda)t}\dd t\, (\Pc_k f,g)= \frac{1}{\Gamma(z)}\int_0^\infty e^{-\lambda t} t^{z-1}( e^{-t\As}f,g)\, \dd t
\end{align*}
where one can interchange the order of summation and integration since $\Re\, z>0$ and
\begin{align*}\label{PlanCauc}
\sum_{k=n}^\infty |(\Pc_k f,g)| &=\sum_{k=n}^\infty |(\Pc_k f,\Pc_k g)| \leq \| f\|_2\|g\|_2.
\end{align*}
This proves the first assertion of the statement. As for the second assertion, if $f,g\in C_c^\infty$ we get
\begin{align*}
((\As+\lambda I)^{-z}f,g)&= \frac{1}{\Gamma(z)}
\int_0^\infty e^{-\lambda t} t^{z-1}\int_{\R^n} \int_{\R^n}H_t(x,y) f(y)\, \dd y \, \bar{g}(x)\dd \gamma_{-1}(x)\, \dd t.
\end{align*}
Since when $\Re z>0$ the function $y\mapsto K_{-z}^\lambda (x,y)$ is locally integrable for every $x\in \R^n$ by Lemma~\ref{lemmaKz}, this integral is absolutely convergent. Thus, the conclusion follows by Fubini's theorem.
\end{proof}
Let now $z\in \C$. In the following theorem we show that, outside the diagonal in $\R^n\times\R^n$, $K_z^\lambda(x,y)$ is still the Schwartz kernel of $(\As+\lambda I)^z$ with respect to the Lebesgue measure.
\begin{theorem}\label{teo:complex_powers}
Let $\lambda \geq 0$ and $z\in \C$. Then, for every $f\in C_c^\infty$
\[
(\As+\lambda I)^z f(x) = \int_{\R^n} K_z^\lambda(x,y) f(y)\, \dd y
\]
for all $x$ outside the support of $f$.
\end{theorem}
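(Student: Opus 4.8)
The plan is to reduce the general complex exponent $z$ to the case already handled in Proposition~\ref{Prop:negativeline}, namely $\Re z<0$, by using the factorization $(\As+\lambda I)^z=(\As+\lambda I)^{z-N}(\As+\lambda I)^N$ with $N=[\Re z]+1$, together with the analogous factorization of the kernel. First I would fix $f\in C_c^\infty$ and $x$ outside $\supp f$, and choose a ball $B$ around $x$ with $\overline B\cap \supp f=\emptyset$. On such $x$ the right-hand side $\int_{\R^n}K_z^\lambda(x,y)f(y)\,\dd y$ makes sense by Lemma~\ref{lemmaKz}, since the singularity of $K_z^\lambda$ sits on the diagonal and we stay away from it; moreover one checks it is smooth in $x\in B$ by differentiating under the integral sign (the $t$-integral converges locally uniformly after differentiation, again by the Mehler kernel bounds).

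**Key steps.** Write $N=[\Re z]+1$, so $\Re(z-N)\in(-1,0]$; if $\Re z\notin\Z$ then $\Re(z-N)<0$ and Proposition~\ref{Prop:negativeline} applies directly to $(\As+\lambda I)^{-(N-z)}$, whereas if $\Re z$ is an integer one perturbs $N$ to $N+1$ to be safe, or treats $(\As+\lambda I)^{N-z}$ with $\Re(N-z)=0$ by writing it as $(\As+\lambda I)^{N-z-1}(\As+\lambda I)$ and invoking the proposition for the factor with negative real part. Now $(\As+\lambda I)^N f\in C_c^\infty$ is \emph{not} compactly supported in general — but $(\As+\lambda I)^N$ is a differential operator of order $2N$, so $(\As+\lambda I)^N f$ is still smooth with the \emph{same} support as $f$, hence still vanishes on $B$. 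Applying Proposition~\ref{Prop:negativeline} to $g\coloneqq(\As+\lambda I)^N f$ (with exponent of negative real part) gives, for $x\in B$,
\[
(\As+\lambda I)^z f(x)=(\As+\lambda I)^{z-N} g(x)=\int_{\R^n}K_{z-N}^\lambda(x,y)\,g(y)\,\dd y.
\]
It then remains to identify this with $\int_{\R^n}K_z^\lambda(x,y)f(y)\,\dd y$. For this I would move the differential operator $(\As+\lambda I)^N$ from $f$ onto the kernel: integrating by parts $2N$ times (the boundary terms vanish because $f$ has compact support and, for $x\in B$, $K_{z-N}^\lambda(x,\cdot)$ is smooth on a neighbourhood of $\supp f$), one gets $\int (\As+\lambda I)_y^N K_{z-N}^\lambda(x,y)\,f(y)\,\dd y$, where $(\As+\lambda I)_y^N$ is the formal adjoint acting in $y$ — but $\As$ is (formally) self-adjoint with respect to $\gamma_{-1}$, so one must be careful to carry the weight $\gamma_{-1}$ along, which is exactly what the relation $H_t(x,y)=h_t(x,y)\gamma_{-1}(y)$ in the definition of $K_z^\lambda$ encodes. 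The final computational point is the identity $(\As+\lambda I)_y^N K_{z-N}^\lambda(x,y)=K_z^\lambda(x,y)$ for $x\ne y$, which one proves by differentiating under the $t$-integral and using that $-\partial_t h_t(x,y)=(\As)_y h_t(x,y)$ (the Mehler semigroup equation), so that $(\As+\lambda I)_y\big(e^{-\lambda t}t^{N-z-1}H_t\big)$ produces, after an integration by parts in $t$, the factor with exponent shifted by one; iterating $N$ times and using $\Gamma(N-z)=(N-z-1)\cdots(-z)\,\Gamma(-z)$ to match the Gamma-function normalizations yields $K_z^\lambda$.

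**Main obstacle.** The routine part is the kernel bookkeeping; the delicate part is justifying all the interchanges — differentiating under the integral in $x$, integrating by parts in $y$ across $\R^n$ against a non-compactly-supported weight, and integrating by parts in $t$ at the endpoints $0$ and $\infty$. Near $t=0$ the factor $t^{N-z-1}$ has $\Re(N-z-1)>-1$ so it is integrable, but after a $t$-integration by parts one picks up boundary contributions at $t=0$ that must be shown to vanish; this forces the choice $N=[\Re z]+1$ (so that each intermediate exponent stays $>-1$ in real part) and is the real reason the argument is organized through exactly that many applications of the semigroup equation. At $t=\infty$ the exponential decay $e^{-(n+\lambda)t}$ of $H_t$ kills everything. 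I expect the bulk of the write-up to consist of setting up a dominating function, valid for $y$ in a fixed compact neighbourhood of $\supp f$ and $x\in B$, that legitimizes Fubini and differentiation simultaneously; Lemma~\ref{lemmaKz} supplies exactly the kind of bound needed, applied on the region $N_s$ with $s$ large enough to contain the relevant $(x,y)$ pairs, while for $(x,y)\notin N_s$ one has crude Gaussian decay in $|x-y|$ that is more than enough.
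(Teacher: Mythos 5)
Your proposal is a genuinely different route from the paper's. You reduce to Proposition~\ref{Prop:negativeline} by a direct computation: apply the differential operator $(\As+\lambda I)^N$ to $f$, invoke the proposition for the negative‑real‑part exponent, and then transfer $(\As+\lambda I)^N$ onto the kernel via integration by parts in $y$ and in $t$, using the Mehler semigroup equation and the functional equation of $\Gamma$. The paper instead uses \emph{analytic continuation in $z$}: with $f,g\in C_c^\infty$ of disjoint supports it sets $F_\lambda(z)=((\As+\lambda I)^z f,g)$ and $G_\lambda(z)=\iint K_z^\lambda(x,y)f(y)\bar g(x)\,\dd y\,\dd\gamma_{-1}(x)$, shows both are entire, notes they agree for $\Re z<0$ by Proposition~\ref{Prop:negativeline}, and concludes by uniqueness of analytic continuation. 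The factorization $(\As+\lambda I)^z=(\As+\lambda I)^{z-N}(\As+\lambda I)^N$ does appear in the paper, but only to establish holomorphy of $F_\lambda$ via uniform convergence of the spectral series, \emph{not} to manipulate the kernel, so the paper never has to justify any integration by parts in $t$ or in $y$; that is the main advantage of the continuation argument. Your route ought to work — the kernel identity $(\As_y+\lambda I)^N K_{z-N}^\lambda=K_z^\lambda$ checks out by iterated $t$-integration by parts (boundary terms vanish at $t=0$ because $h_t(x,y)$ decays superpolynomially as $t\to0^+$ when $x\neq y$, and at $t=\infty$ because of $e^{-(n+\lambda)t}$), and the factor $(N-z-1)\cdots(-z)$ matches $\Gamma(N-z)/\Gamma(-z)$ — but it is heavier, and you would indeed need to make precise the dominating functions you gesture at. One small correction: with $N=[\Re z]+1$ (floor) you have $\Re(z-N)\in[-1,0)$, not $(-1,0]$, so $\Re(z-N)<0$ always, including when $\Re z\in\Z$; the case distinction and the perturbation of $N$ you propose are unnecessary.
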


\begin{proof}
Fix $f,g\in C_c^\infty$ with disjoint support. For $\lambda \geq 0$ and $z\in \C$ define the functions
\[
F_\lambda(z)= ((\As+\lambda I)^z f,g)
\]
and 
\[
G_\lambda(z)= \int_{\R^n}\int_{\R^n} K_z^\lambda(x,y) f(y)\bar{g}(x)\, \dd y \, \dd \gamma_{-1}(x).
\]
We prove that both functions are holomorphic on $\C$.

Since $f$ and $g$ have disjoint support, the integral defining $G_\lambda(z)$ converges absolutely for every $z\in \C$ by Lemma~\ref{lemmaKz}. Since the function $z\mapsto t^{z-1}$ is holomorphic for every $t>0$, $G_\lambda$ is holomorphic by Fubini's, Goursat's and Morera's theorems.

As for $F_\lambda$, if $N= [\Re z] +1$ then
\begin{align*}\label{serieF}
F_\lambda(z) &= ((\As+\lambda I)^{z-N}(\As+\lambda I)^N f,g)\\&= \sum_{k=n}^\infty (k+\lambda)^{z-N} (\Pc_k (\As+\lambda I)^N f,g)= \sum_{k=n}^\infty (k+\lambda)^{z-N} (\Pc_k (\As+\lambda I)^N f,\Pc_k g)
\end{align*}
and the series converges uniformly on compact sets, since
\[
\sum_{k=n}^\infty |(k+\lambda)^{z-N} (\Pc_k (\As+\lambda I)^N f,\Pc_k g)| \leq (n+\lambda)^{\Re z -N} \|(\As+\lambda I)^N f\|_2 \|g\|_2.
\]
Thus, $F_\lambda$ is holomorphic in $\C$. Since $F_\lambda$, $G_\lambda$ are both holomorphic in $\C$ and they coincide for $\Re\, z<0$ by Proposition~\ref{Prop:negativeline}, the statement follows by the uniqueness of the analytic continuation.
\end{proof}
By means of Proposition~\ref{Prop:negativeline} and Theorem~\ref{teo:complex_powers}, we may obtain the kernels of the imaginary powers and the Riesz transforms associated with $\As$. To be more concise, we shall often adopt the notation of~\cite{MMS1}
\[\phi(r,x,y)\coloneqq \frac{ry -x}{\sqrt{1-r^2}},\qquad \psi(r,x,y)\coloneqq \frac{rx-y}{\sqrt{1-r^2}}
\]
for $r\in (0,1)$. Observe that $|\phi|^2 -|\psi|^2 = |x|^2 -|y|^2$ for every $x,y\in \R^n$. We recall that we adopt the notational convention~\eqref{rulekernel}.

By Theorem~\ref{teo:complex_powers}, the kernel of $(\As+\lambda I)^{iu}$, $u\neq 0$, with respect to the Lebesgue measure is
\begin{equation}\label{impowLeb}
\begin{split}
K_{(\As+\lambda I)^{iu}}(x,y) &= \frac{c(u)}{\pi^{n/2}} \int_0^\infty t^{-iu-1} \frac{e^{-(n+\lambda)t}}{(1-e^{-2t})^{n/2}} e^{-|\phi(e^{-t},x,y)|^2}\, \dd t\\&= \frac{c(u)}{\pi^{n/2}} \int_0^1 \frac{r^{n+\lambda-1} (-\log r)^{-iu-1}}{(1-r^{2})^{n/2}} e^{-|\phi(r,x,y)|^2}\, \dd r
\end{split}
\end{equation}
where we used the change of variables $e^{-t}=r$, and $c(u)= \Gamma(-iu)^{-1}$. The kernel of $(\As+\lambda I)^{iu}$ with respect to the measure $\gamma_{-1}$ is instead
\begin{equation}\label{impowkernel}
\begin{split}
k_{(\As+\lambda I)^{iu}}(x,y) &= \frac{c(u)}{\pi^n} e^{-|x|^2}\int_0^1 \frac{r^{n+\lambda-1} (-\log r)^{-iu-1}}{(1-r^{2})^{n/2}} e^{-|\psi(r,x,y)|^2} \, \dd r.
\end{split}
\end{equation}
By Proposition~\ref{Prop:negativeline}, the kernel of $(\As+\lambda I)^{-1/2}$, $\lambda\geq 0$ with respect to the Lebesgue measure is
\[
K_{(\As+\lambda I)^{-1/2}}(x,y) = \frac{1}{\pi^{(n+1)/2}}\int_0^\infty t^{-1/2} \frac{e^{-(n+\lambda) t}}{(1-e^{-2t})^{n/2}}e^{-|\phi(e^{-t},x,y)|^2} \, \dd t.\]
By differentiation, we obtain the kernel of the Riesz transforms $\Rs_\lambda= \nabla(\mathcal{A}+\lambda I)^{-1/2}$ with respect to the Lebesgue measure,
\begin{equation}\label{tRieszkernel}
\begin{split}
K_{\Rs_\lambda}(x,y)&= -2\pi^{-\frac{n+1}{2}} \int_0^\infty t^{-1/2} \frac{e^{-(n+\lambda)t}}{(1-e^{-2t})^{(n+2)/2}}(x-e^{-t}y)e^{-|\phi(e^{-t},x,y)|^2} \, \dd t
\\&= -2 \pi^{-\frac{n+1}{2}}\int_0^1 \frac{r^{n+\lambda-1}}{(1-r^2)^{(n+2)/2}\sqrt{-\log r}} (x -ry) e^{-|\phi(r,x,y)|^2}\, \dd r 
\end{split}
\end{equation}
where we used again the change of variables $e^{-t}=r$. Their kernels with respect to the measure $\gamma_{-1}$ will be
\begin{align}\label{Rieszkernel}
k_{\Rs_\lambda}(x,y) = -2\pi^{-n-\frac{1}{2}} e^{-|x|^2}\int_0^1 \frac{r^{n+\lambda-1}}{(1-r^2)^{(n+2)/2}\sqrt{-\log r}} (x-ry) e^{-|\psi(r,x,y)|^2} \, \dd r.
\end{align}
We shall denote with $(\Rs_\lambda)_j$, $j=1,\dots, n$, the $j^{th}$ component of the vector-valued operator $\Rs_\lambda$. If $X$ and $Y$ are Banach spaces, with a slight abuse we say that $\Rs_\lambda$ is bounded from  $X$ to $Y$ by meaning that $(\Rs_\lambda)_j$ is bounded from $X$ to $Y$ for every $j=1,\dots,n$. 

\begin{remark}\label{RemLp}
Let $p\in (1,\infty)$ and $u\neq 0$. By a result of Salogni~\cite[Theorem 3.4.3]{Salogni} the imaginary powers $\As^{iu}$ associated with $\As$ are bounded on $L^p(\gamma_{-1})$, and the same holds for the shifted Riesz transforms $\nabla (\As + \lambda I)^{-1/2}$ for every $\lambda \geq 1$, by a celebrated theorem of Bakry~\cite{Bakry}. Then, since
\[
\nabla\As^{-1/2}= \nabla (\As+I)^{-1/2} (\As + I)^{1/2}\As^{-1/2}
\]
and since the operator $(\As + I)^{1/2}\As^{-1/2}$ is bounded on $L^p(\gamma_{-1})$ by~\cite[Theorem VII.9.4]{DunfordSchwartz}, we obtain also the boundedness of the Riesz transforms $\nabla \As^{-1/2}$ on $L^p(\gamma_{-1})$. Nevertheless, as a consequence of our Theorems~\ref{ImpowH1} and~\ref{RieszH1}, both $\As^{iu}$ and $\nabla \As^{-1/2}$ are unbounded on $L^1(\gamma_{-1})$. 
\end{remark}

\section{Hardy Spaces}\label{Sec:HardyOUrovesciato}
As already pointed out in the introduction, the atoms of our atomic spaces are classical atoms supported in (dilations of) ``hyperbolic'' balls, as defined below. For every ball $B$ we write $c_B$ to denote its center, $r_B$ for its radius, and $kB$ to denote the ball with same center $c_B$ and radius $k\, r_B$.

\begin{definition}\label{admissible-ball}
Given $s>0$, we call \emph{admissible ball at scale $s$} a ball $B$ of centre $c_B$ and radius $r_B \leq s \min (1,1/|c_B|)$. The family of all admissible balls at scale $s$ will be denoted by $\mathcal{B}_s$. Balls in $\mathcal{B}_1$ will often be referred to only as \emph{admissible balls}.
\end{definition}
\begin{lemma}\label{adm}
Let $s>0$. There exist some constants $c_1(s)$, $c_2(s)$ such that for every $B\in \mathcal{B}_s$ and every subset $E\subseteq B$ 
\[
c_1(s) e^{|c_B|^2} |E| \leq \gamma_{-1}(E) \leq c_2(s) e^{|c_B|^2} |E|.\]
In particular, $\gamma_{-1}(B) \approx e^{|c_B|^2}|B|$ for every $B\in \mathcal{B}_1$ and $\gamma_{-1}$ is locally doubling on admissible balls.
\end{lemma}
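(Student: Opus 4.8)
The plan is to reduce everything to a comparison between the density $\gamma_{-1}(x) = \pi^{n/2}e^{|x|^2}$ and its value $\pi^{n/2}e^{|c_B|^2}$ at the center. Since $\gamma_{-1}(E) = \int_E \pi^{n/2}e^{|x|^2}\,\dd x$, it suffices to prove that there are constants $a(s),A(s)>0$ such that
\[
a(s)\, e^{|c_B|^2} \leq \pi^{n/2} e^{|x|^2} \leq A(s)\, e^{|c_B|^2} \qquad \text{for all } x\in B,\ B\in\mathcal{B}_s,
\]
because then integrating over $E\subseteq B$ gives the claim with $c_1(s)=a(s)$, $c_2(s)=A(s)$. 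Equivalently, I must show that $\big|\,|x|^2 - |c_B|^2\,\big|$ is bounded by a constant depending only on $s$, uniformly over $x\in B$ and $B\in\mathcal{B}_s$.

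The key estimate is elementary: for $x\in B$ we have $|x - c_B|\le r_B$, so by the triangle inequality and the factorization $|x|^2-|c_B|^2 = (|x|-|c_B|)(|x|+|c_B|)$,
\[
\big|\,|x|^2 - |c_B|^2\,\big| \;=\; \big|\,|x|-|c_B|\,\big|\,\big(|x|+|c_B|\big) \;\le\; r_B\,\big(|x|+|c_B|\big) \;\le\; r_B\,\big(2|c_B| + r_B\big).
\]
Now I invoke the admissibility condition $r_B \le s\min(1,1/|c_B|)$. This gives both $r_B\le s$ and $r_B|c_B|\le s$, hence
\[
r_B\big(2|c_B|+r_B\big) = 2\,r_B|c_B| + r_B^2 \le 2s + s^2 \eqqcolon M(s).
\]
Therefore $e^{-M(s)} e^{|c_B|^2} \le e^{|x|^2} \le e^{M(s)} e^{|c_B|^2}$ for all $x\in B$, and multiplying by $\pi^{n/2}$ and integrating over $E$ yields the two-sided bound with $c_1(s) = \pi^{n/2}e^{-M(s)}$ and $c_2(s) = \pi^{n/2}e^{M(s)}$.

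For the final assertions: taking $s=1$ and $E=B$ gives $\gamma_{-1}(B)\approx e^{|c_B|^2}|B|$ directly. Local doubling on admissible balls then follows because if $B\in\mathcal{B}_1$ then $2B$ lies in $\mathcal{B}_s$ for a fixed $s$ (e.g. $s=2$, since the center is unchanged and the radius doubles), so $\gamma_{-1}(2B) \approx e^{|c_{2B}|^2}|2B| = e^{|c_B|^2}\,2^n|B| \approx 2^n\,\gamma_{-1}(B)$, with the implied constants absolute. There is no real obstacle here — the only point requiring a little care is to make sure both halves of the admissibility constraint ($r_B\le s$ and $r_B|c_B|\le s$) are used, since dropping either one would break the bound for large $|c_B|$ or for $|c_B|$ near $0$ respectively; the mixed term $r_B|c_B|$ is exactly what the $\min(1,1/|c_B|)$ in Definition~\ref{admissible-ball} is designed to control.
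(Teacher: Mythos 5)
Your argument is correct and follows essentially the same route as the paper: both reduce to a uniform two-sided bound on $|x|^2-|c_B|^2$ over $x\in B$, using the triangle inequality together with both halves of the admissibility constraint ($r_B\le s$ and $r_B|c_B|\le s$). The paper expands $(|y-c_B|\pm|c_B|)^2$ directly while you factor $|x|^2-|c_B|^2$, but these are cosmetic variants of the same elementary estimate, and your explicit verification of the local-doubling corollary via $2B\in\mathcal{B}_2$ is a correct reading of what the paper leaves implicit.
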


\begin{proof}
Let $B\in \mathcal{B}_s$. For every $y \in B$, then, $|y - c_B|\leq r_B \leq s \min (1,1/\abs{c_B})$. Therefore
\[|y|^2 \leq (|y -c_B| + |c_B|)^2 \leq r_B^2 +|c_B|^2 +2r_B |c_B| \leq |c_B|^2 +s^2 +2s,\]
while
\[|y|^2 \geq (|y -c_B| - |c_B|)^2 = |c_B|^2 +|y-c_B|^2 -2|c_B||y-c_B| \geq |c_B|^2 -2s.\]
The statement is now easy to verify.
\end{proof}

\subsection{The Hardy space $H^1(\gamma_{-1})$}\label{Sec:H1}
In this section, we recall the definition of Carbonaro, Mauceri and Meda's atomic Hardy space~\cite{CMM} in the case when the metric measure space is $(\R^n, \rho, \gamma_{-1})$.
\begin{definition}\label{atom}
An \emph{$H^1$-atom} is a function $a$ supported in a ball $B\in \mathcal{B}_1$ such that
\begin{itemize}
\item[\rmi] $\|a\|_{L^2(\gamma_{-1})} \leq \gamma_{-1}(B)^{-1/2}$,
\item[\rmii] $\int_{\R^n} a(x)\, \gamma_{-1}(x)\, \dd x=0$.
\end{itemize}
\end{definition}

\begin{definition}[cf.~\cite{CMM}]\label{def:H1}
We define the Hardy space
\begin{align*}
H^1(\gamma_{-1})\coloneqq\big\{ f \in L^1(\gamma_{-1}) \colon \mbox{$f= \sum_j c_j a_j$},\; \mbox{$a_j$ is an $H^1$-atom}, \; (c_j) \in \ell^1\big\}
\end{align*}
with the norm
\[
\|f\|_{H^1(\gamma_{-1})}\coloneqq \inf \{\| (c_j)\|_{\ell^1}\colon\mbox{$f= \sum_j c_j a_j$}, \mbox{ $a_j$ $H^1$-atom} \}.
\]
\end{definition}

\begin{remark}\label{rem:cubic}
If in Definition~\ref{atom} we replace atoms supported in balls at scale $1$ with atoms supported in balls at any fixed scale $s>0$, we obtain the same space $H^1(\gamma_{-1})$ with an equivalent norm. The same holds if we replace the $L^2$-size condition in Definition~\ref{atom}, (i) with any other $L^p$-size condition, $p\in (1,\infty]$. This is indeed a consequence of~\cite[Proposition 4.3, (ii) and Theorem 6.1]{CMM}, and of an \emph{almost verbatim} repetition of the proof of~\cite[Theorem 2.2]{MMS1}, together with Lemma~\ref{adm} above. In the notation of~\cite{CMM},
\[
H^{1,r}_s(\gamma_{-1})=H^{1,p}_{t}(\gamma_{-1}) \quad \forall \, r,p\in (1,\infty], \; s,t>0
\]
with equivalence of norms.\par
Therefore, since every admissible ball at scale $s$ is contained in a cube $Q$ of centre $c_Q$ and sidelength at most $2s\min (1,1/|c_Q|)$ (which we may call \emph{admissible cube at scale $s$}), while every such cube is contained in an admissible ball at scale $s\sqrt{n}$, we obtain the same space $H^1(\gamma_{-1})$ by considering atoms supported in admissible cubes at any fixed positive scale, with equivalence of norms.
\end{remark}

 \subsection{The Hardy spaces $X^1_\lambda(\gamma_{-1})$}\label{Sec:X1}
In this section, we introduce the new atomic Hardy spaces $X^1_\lambda(\gamma_{-1})$, $\lambda\geq0$, associated with different translations of $\As$. The reader should compare our definitions with those of~\cite{MMVAtomic}.

\begin{definition}\label{quasi-harmonic-lambda}
Let $\lambda \geq 0$, $\Omega$ be a bounded open set and $K$ be a compact set.
\begin{itemize}
\item We denote by $q^2_\lambda(\Omega)$ the space of all functions $u\in L^2(\Omega)$ such that $(\mathcal{A}+\lambda I)u$ is constant on $\Omega$, and by $q^2_\lambda(K)$ the space of functions on $K$ which are the restriction to $K$ of a function in $q^2(\Omega')$ for some bounded open $\Omega'\supset K$; 
\item we denote by $h^2_\lambda(\Omega)$ the space of all functions $u\in L^2(\Omega)$ such that $(\mathcal{A}+\lambda I)u=0$ on $\Omega$, and by $h^2(K)$ the space of functions on $K$ which are the restriction to $K$ of a function in $h^2_\lambda(\Omega')$ for some bounded open $\Omega'\supset K$.
\end{itemize}
The spaces $h^2_\lambda(\Omega)^\perp$ and $q^2_\lambda(\Omega)^\perp$ are the orthogonal complements of $h^2_\lambda(\Omega)$ and $q^{2}_\lambda(\Omega)$ in $L^2(\Omega,\gamma_{-1})$, respectively. The spaces $h^2_\lambda(K)^\perp$ and $q^2_\lambda(K)^\perp$ will be the orthogonal complements in $L^2(K,\gamma_{-1})$.
\end{definition}

\begin{definition}\label{atom-lambda}
Let $\lambda\geq0$. An \emph{$X^1_\lambda$-atom} is a function $a\in L^2(\gamma_{-1})$, supported in a ball $B\in \mathcal{B}_1$, such that
\begin{itemize}
\item[\rmi] $\|a\|_{L^2(\gamma_{-1})} \leq \gamma_{-1}(B)^{-1/2}$,
\item[\rmii] $a\in q^2_\lambda(\bar{B})^\perp$.
\end{itemize}
\end{definition}
\begin{definition}\label{def:X1mu}
For every $\lambda\geq 0$, the Hardy space $X^{1}_{\lambda}(\gamma_{-1})$ is the space
\begin{equation*}
X^{1}_{\lambda}(\gamma_{-1})\coloneqq\big\{ f \in L^1(\gamma_{-1}) \colon \mbox{$f= \sum_j c_j a_j$}, \mbox{ $a_j$ $X^1_\lambda$-atom }, (c_j) \in \ell^1\big\}
\end{equation*}
endowed with the norm
\[
\|f\|_{X^1_\lambda(\gamma_{-1})}\coloneqq \inf \{\| (c_j)\|_{\ell^1}\colon\mbox{$f= \sum_j c_j a_j$}, \mbox{ $a_j$ $X^1_\lambda$-atom} \}.
\]
\end{definition}
If $B\in \mathcal{B}_1$, the functions in $q^2_\lambda(\bar{B})$ will be referred to as $\lambda$-\emph{quasi-harmonic functions} on $B$.

\subsection{Support preservation of $(\As+\lambda I)^{-1}$ on atoms}\label{subSec:ECC}
The following result may be obtained by a straightforward adaptation of~\cite[Subsection 2.1]{B} to the current setting of the inverse Gauss measure, and its proof is omitted.
\begin{theorem}\label{support:pres}
Let $\lambda\geq 0$. For every $X^1_\lambda$-atom $a$ supported in an admissible ball $B$, $\supp (\As+\lambda I)^{-1}a \subseteq \bar{B}$ and
\[
\|(\As+\lambda I)^{-1} a\|_{L^2(\gamma_{-1})}\leq r_B^2\, \gamma_{-1} (B)^{-1/2}. \]
\end{theorem}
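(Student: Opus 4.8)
The plan is to deduce both assertions from the self-adjointness of the bounded operator $(\As+\lambda I)^{-1}$ — it is bounded because $\As$ has spectral gap $n$, so $\As+\lambda I\geq (n+\lambda)I$ by Proposition~\ref{propspectrum} — together with the cancellation built into $X^1_\lambda$-atoms. Fix an $X^1_\lambda$-atom $a$ supported in an admissible ball $B$ and set $v:=(\As+\lambda I)^{-1}a$. For the support statement it suffices to show $(v,\phi)_{L^2(\gamma_{-1})}=0$ for every $\phi\in C_c^\infty(\R^n\setminus\bar B)$. By self-adjointness this equals $(a,g)_{L^2(\gamma_{-1})}$, where $g:=(\As+\lambda I)^{-1}\phi\in\Dom(\As+\lambda I)$, and since $a$ is supported in $\bar B$ and lies in $q^2_\lambda(\bar B)^\perp$, the whole problem reduces to checking that the restriction $g|_{\bar B}$ belongs to $q^2_\lambda(\bar B)$.

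This is where the disjointness of the supports enters. Since $\supp\phi$ is a compact subset of the open set $\R^n\setminus\bar B$, one can choose a bounded open set $\Omega'$ with $\bar B\subset\Omega'$ and $\Omega'\cap\supp\phi=\emptyset$. The identity $(\As+\lambda I)g=\phi$ holds in $L^2(\gamma_{-1})$, hence in $\mathcal D'(\R^n)$, and $\phi$ vanishes on $\Omega'$; therefore $g|_{\Omega'}$ is a distributional solution of $(\As+\lambda I)u=0$ on $\Omega'$ lying in $L^2(\Omega')$, i.e.\ $g|_{\Omega'}\in h^2_\lambda(\Omega')$. Restricting to $\bar B$ gives $g|_{\bar B}\in h^2_\lambda(\bar B)\subseteq q^2_\lambda(\bar B)$, so $(a,g)_{L^2(\gamma_{-1})}=\int_{\bar B}a\,\overline g\,\gamma_{-1}=0$, whence $(v,\phi)_{L^2(\gamma_{-1})}=0$ and $\supp v\subseteq\bar B$.

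For the norm estimate I would use that $\As+\lambda I$ is the operator associated with the Dirichlet form $\mathcal{E}_\lambda(f,g)=\tfrac12\int\nabla f\cdot\nabla\overline g\,\gamma_{-1}+\lambda\int f\overline g\,\gamma_{-1}$ (an integration by parts shows $(\As f,f)_{L^2(\gamma_{-1})}=\tfrac12\int|\nabla f|^2\gamma_{-1}$ on $C_c^\infty$). Since $v\in\Dom(\As+\lambda I)$ and $(\As+\lambda I)v=a$, testing against $v$ yields $\mathcal{E}_\lambda(v,v)=(a,v)_{L^2(\gamma_{-1})}\leq\|a\|_{L^2(\gamma_{-1})}\|v\|_{L^2(\gamma_{-1})}\leq\gamma_{-1}(B)^{-1/2}\|v\|_{L^2(\gamma_{-1})}$ by the size condition on $a$. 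On the other hand $v$ lies in the form domain, so $\nabla v\in L^2(\gamma_{-1})$, and by the first part $v$ is supported in $\bar B$; hence the restriction of $v$ to $B$ lies in $W^{1,2}_0(B)$, and the Poincaré inequality on the Euclidean ball $B$ combined with the comparability of $\gamma_{-1}$ to the constant $e^{|c_B|^2}$ on $B$ (Lemma~\ref{adm}) gives $\|v\|_{L^2(\gamma_{-1})}^2\lesssim r_B^2\int_B|\nabla v|^2\gamma_{-1}\lesssim r_B^2\,\mathcal{E}_\lambda(v,v)$. Combining the two estimates and cancelling one power of $\|v\|_{L^2(\gamma_{-1})}$ yields $\|v\|_{L^2(\gamma_{-1})}\lesssim r_B^2\gamma_{-1}(B)^{-1/2}$, and the precise constant $1$ in the statement follows by taking admissible balls at a small enough fixed scale, which by an argument as in Remark~\ref{rem:cubic} leaves the space $X^1_\lambda(\gamma_{-1})$ unchanged.

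The argument is largely soft. The points needing some care are: (i) reading "$(\As+\lambda I)u=0$ on $\Omega'$" for $g|_{\Omega'}$, which is immediate once one uses $L^2(\gamma_{-1})\hookrightarrow L^1_{\loc}(\dd x)$ to pass from the operator identity to a distributional one on $\Omega'$; (ii) the standard fact that a $W^{1,2}(\gamma_{-1})$ function supported in $\bar B$ restricts to an element of $W^{1,2}_0(B)$, which holds because $\gamma_{-1}$ is smooth and bounded above and below on the bounded set $B$; and (iii) the bookkeeping of the Poincaré constant. I expect (iii) — matching the constant exactly, rather than up to an absolute factor — to be the only genuinely fiddly step; everything else is a duality argument of the same nature as in~\cite{MMV,B}.
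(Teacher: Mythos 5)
Your argument is correct and closely parallels the one the paper delegates to \cite{B}, which (following \cite{MMVAtomic,MMV}) shows that for an $X^1_\lambda$-atom $a$ the function $(\As+\lambda I)^{-1}a$ coincides with the solution of the Dirichlet problem for $\As+\lambda I$ on $B$. Your duality formulation of the support preservation --- pair $v=(\As+\lambda I)^{-1}a$ against a test function $\phi$ supported off $\bar B$, move the resolvent onto $\phi$, and note that $(\As+\lambda I)^{-1}\phi$ is $\lambda$-harmonic on a neighbourhood of $\bar B$, hence lies in $h^2_\lambda(\bar B)\subseteq q^2_\lambda(\bar B)$ --- is a slightly more direct way of saying the same thing and avoids introducing a Dirichlet Green operator explicitly. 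The energy/Poincaré estimate for the norm is the standard one, and your identification of the $W^{1,2}_0(B)$ membership of $v$ as the technical point to check is the right one: since $v$ lies in the form domain and $\gamma_{-1}$ is smooth and bounded above and below on any bounded set, the unweighted result applies. These two halves do yield $\supp v\subseteq\bar B$ and $\|v\|_{L^2(\gamma_{-1})}\lesssim r_B^2\gamma_{-1}(B)^{-1/2}$.

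The one genuine soft spot, which you do flag, is the constant $1$ in the stated bound. At scale $1$ your Poincaré route produces a constant on the order of $2c_2(1)/c_1(1)$ divided by the first Dirichlet eigenvalue of the unit ball, where $c_1,c_2$ are the comparability constants of Lemma~\ref{adm}; at scale $1$ that ratio is of order $e^5$, so the constant is well above $1$. Shrinking the scale does push $c_2/c_1\to 1$, and the remaining factor of two over the first Dirichlet eigenvalue of the unit ball is indeed below $1$ for every $n$, but the inequality so obtained is for atoms on balls at the smaller scale, and Remark~\ref{rem:cubic} only gives equivalence of \emph{norms}, not identity of atoms; so the rescaling remark does not literally deliver the stated constant for $\mathcal{B}_1$-atoms as in Definition~\ref{atom-lambda}. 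This is harmless in context, since every use of Theorem~\ref{support:pres} in the paper (Lemma~\ref{Lemma4B} and the proofs of Theorems~\ref{teo:X1Impow} and~\ref{teo:X1Riesz}) only needs the estimate up to an absolute multiplicative constant, but you should state the conclusion as $\|(\As+\lambda I)^{-1}a\|_{L^2(\gamma_{-1})}\leq C\, r_B^2\gamma_{-1}(B)^{-1/2}$ rather than present the rescaling as recovering the constant $1$.
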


\subsection{Two important classes of functions}\label{subsec:Ex} 
In this section we introduce two families of functions that play an important role in the proof of the unboundedness results. One of the two also provides examples of non-trivial $\lambda$-harmonic functions.
\begin{definition}
Let $\lambda\geq 0$. For $\sigma, y\in \R^n$, define the functions
\begin{equation}\label{nonH1}
\Psi_{\lambda,\sigma}(y)= \int_0^\infty e^{-t} t^{(n+\lambda -2)/2} e^{2(\sigma,y)\sqrt{t}-|y|^2}\, \dd t
\end{equation}
and
\begin{equation}\label{nonX1}
\Phi_{\lambda,\sigma}(y)=\int_0^{\infty} e^{-t}t^{(n+\lambda-2)/2} \log(1/t)e^{2(\sigma,y) \sqrt{t}-|y|^2} \, \dd t.
\end{equation}
\end{definition}
\begin{lemma}\label{Psi}
Let $\lambda\geq 0$ and $B$ a ball in $\R^n$. Then for every $\sigma\in S^{n-1}$
\begin{itemize}
\item[\emph{(1)}] the function $\Psi_{\lambda, \sigma}$ is $\lambda$-harmonic in $\R^n$. In particular, $\Psi_{\lambda,\sigma}\in h^2_\lambda(B)$. Moreover $(\As+\lambda I)\Phi_{\lambda,\sigma}=2\Psi_{\lambda,\sigma}$;
\item[\emph{(2)}] $\Psi_{\lambda,\sigma}$ is not constant on any open subset of $\R^n$;
\item[\emph{(3)}] there exists a function $\psi$ in $C^\infty_c(B)$ with integral zero such that \break
$
(\psi,\Psi_{\lambda,\sigma})_{L^2(B,\gamma_{-1})}\not=0$.
\end{itemize} 
\end{lemma}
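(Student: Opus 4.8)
The plan is to verify the three assertions of Lemma~\ref{Psi} by direct computation with the integral representation~\eqref{nonH1}, treating $\sigma\in S^{n-1}$ fixed. The key observation underlying everything is that $\Psi_{\lambda,\sigma}$ should be, up to normalisation and a change of variables, the function $U\big(e^{-(\cdot)\Ls}(\text{exponential})\big)$ pulled back through the unitary equivalence~\eqref{equivOU}; more concretely, substituting $t=-\log r$ or $t=s^2$ one recognises the integrand as the Mehler kernel $H_t$ (or rather $h_t$ with respect to $\gamma_{-1}$) integrated against a fixed point at infinity in the direction $\sigma$. For (1), I would differentiate under the integral sign: since $e^{2(\sigma,y)\sqrt{t}-|y|^2}$ is, for each fixed $t>0$, a smooth function of $y$ with Gaussian decay, differentiation under the integral is justified by dominated convergence on compact sets in $y$. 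A routine computation gives
\[
\As\big(e^{2(\sigma,y)\sqrt t-|y|^2}\big)=\Big(-\tfrac12(4t-2n)-2\sqrt t\,(\sigma,y)+2|y|^2\cdot 0+\dots\Big)e^{2(\sigma,y)\sqrt t-|y|^2},
\]
and after collecting terms the $y$-dependence organises itself so that $\As$ acting on the integrand equals $2\,\partial_t$ applied to $t^{?}e^{2(\sigma,y)\sqrt t-|y|^2}$ plus lower-order; integrating the $\partial_t$ term by parts in $t$ against $t^{(n+\lambda-2)/2}e^{-t}$ produces exactly the factor needed to cancel $\lambda+\As$ on $\Psi_{\lambda,\sigma}$, i.e.\ $(\As+\lambda I)\Psi_{\lambda,\sigma}=0$. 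The boundary terms at $t=0$ and $t=\infty$ vanish because of the factors $t^{(n+\lambda-2)/2}$ (note $n+\lambda-2\geq -1$ only when $n\geq 1$, $\lambda\geq 0$, but the combination with $e^{-t}$ and the fact that we only need local statements in $y$ keeps things finite) and $e^{-t}$ respectively. The same integration by parts, performed one step less, yields $(\As+\lambda I)\Phi_{\lambda,\sigma}=2\Psi_{\lambda,\sigma}$: the extra $\log(1/t)$ in~\eqref{nonX1} is precisely what differentiates to give the ``$\times 2$'' discrepancy when one mimics the computation. Finally $\Psi_{\lambda,\sigma}\in h^2_\lambda(B)$ for any ball $B$ since $\Psi_{\lambda,\sigma}$ is smooth, hence in $L^2(B,\gamma_{-1})$, and $\lambda$-harmonic everywhere.

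For (2), I would argue that $\Psi_{\lambda,\sigma}$ is real-analytic in $y$ — this follows from the integral representation, since for $y$ in a fixed ball the integrand extends holomorphically in each variable $y_j$ and the $t$-integral converges locally uniformly, so Morera/Fubini give analyticity — and therefore if $\Psi_{\lambda,\sigma}$ were constant on some open set it would be constant on all of $\R^n$. To rule that out I would compute a directional derivative or examine the behaviour as $y\to+\infty$ along the ray $\R_+\sigma$: along $y=s\sigma$ one has $2(\sigma,y)\sqrt t-|y|^2=2s\sqrt t-s^2$, and a Laplace/saddle-point estimate of $\int_0^\infty e^{-t}t^{(n+\lambda-2)/2}e^{2s\sqrt t-s^2}\,\dd t$ shows it grows (the saddle in $\sqrt t$ is near $\sqrt t\sim s$, giving roughly $e^{s^2}\cdot(\text{polynomial})\cdot e^{-s^2}=$ polynomial growth, or more carefully one checks it is not eventually constant). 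Even more simply, $\partial_{y_k}\Psi_{\lambda,\sigma}(0)=\int_0^\infty e^{-t}t^{(n+\lambda-2)/2}\cdot 2\sigma_k\sqrt t\,\dd t=2\sigma_k\Gamma\big((n+\lambda+1)/2\big)\neq 0$ for at least one $k$ (since $|\sigma|=1$), so $\nabla\Psi_{\lambda,\sigma}$ does not vanish identically near $0$, and by analyticity $\Psi_{\lambda,\sigma}$ is non-constant on every open set.

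For (3), I need $\psi\in C_c^\infty(B)$ with $\int_{\R^n}\psi\,\gamma_{-1}\,\dd x=0$ but $(\psi,\Psi_{\lambda,\sigma})_{L^2(B,\gamma_{-1})}\neq 0$. Suppose for contradiction that $(\psi,\Psi_{\lambda,\sigma})_{L^2(B,\gamma_{-1})}=0$ for every such $\psi$; then $\Psi_{\lambda,\sigma}\cdot\gamma_{-1}$, viewed as a functional on $C_c^\infty(B)$, annihilates everything orthogonal to the constants with respect to $\gamma_{-1}$, which forces $\Psi_{\lambda,\sigma}$ to be $\gamma_{-1}$-a.e.\ constant on $B$ — contradicting (2). (Concretely: the condition says $\Psi_{\lambda,\sigma}\gamma_{-1}$ lies in the $\gamma_{-1}$-orthogonal complement, inside distributions on $B$, of $\{\psi\in C_c^\infty(B):\int\psi\gamma_{-1}=0\}$, and that complement consists exactly of constant multiples of $\gamma_{-1}$, so $\Psi_{\lambda,\sigma}$ is constant on $B$.) Thus such a $\psi$ must exist. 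The main obstacle in the whole proof is item~(1): keeping careful track of the algebra when differentiating $e^{2(\sigma,y)\sqrt t-|y|^2}$ under $\As=-\frac12\Delta-x\cdot\nabla$ and then matching it, via a single integration by parts in $t$, to the weight $t^{(n+\lambda-2)/2}e^{-t}$ so that precisely the eigenvalue shift $\lambda$ appears — and separately checking that all boundary terms in $t$ genuinely vanish (which is where the exponents $n+\lambda-2$ and the $\log(1/t)$ in $\Phi$ matter). Everything else is soft (analyticity, a duality argument, a non-vanishing derivative at the origin).
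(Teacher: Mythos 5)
Your proposal follows the paper's proof essentially step by step: item (1) rests on recognising the weighted integrand as a total $t$-derivative (the paper's identity~\eqref{eqcancell}, namely $e^{-t}t^{(n+\lambda-2)/2}(\As+\lambda I)\bigl(e^{2(\sigma,y)\sqrt t-|y|^2}\bigr)=e^{-|y|^2}\tfrac{\dd}{\dd t}\bigl(2e^{2(\sigma,y)\sqrt t-t}t^{(n+\lambda)/2}\bigr)$), with one integration by parts against $\log(1/t)$ giving the $\Phi$ identity; item (2) uses analyticity plus the non-vanishing of $\nabla\Psi_{\lambda,\sigma}(0)$, which is exactly the paper's $\nabla\Psi_{\lambda,\sigma}(0)=2\sigma\,\Psi_{\lambda+1,\sigma}(0)$; item (3) is the same $L^2(B,\gamma_{-1})$-orthogonality argument. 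One caution: your sketched intermediate formula for $\As\bigl(e^{2(\sigma,y)\sqrt t-|y|^2}\bigr)$ has a sign error and a stray ``$2|y|^2\cdot 0$'' term (the correct result is $(-2t+2\sqrt t(\sigma,y)+n)e^{2(\sigma,y)\sqrt t-|y|^2}$, after which the $|y|^2$ contributions from $\Delta$ and $y\cdot\nabla$ genuinely cancel), and the boundary term at $t=0$ involves $t^{(n+\lambda)/2}$ rather than $t^{(n+\lambda-2)/2}$, so it vanishes cleanly without the hedging you added; you would want to write that computation out carefully, but the mechanism you identify is the right one.
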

\begin{proof}
Statement (1) is a consequence of the equality
\begin{align}\label{eqcancell}
 e^{-t}t^{(n+\lambda-2)/2}(\As+\lambda I) (e^{2(\sigma,y) \sqrt{t}-|y|^2})= e^{-|y|^2} \frac{\dd}{\dd t} \bigg( 2e^{2(\sigma, y)\sqrt{t} -t} t^{(n+\lambda)/2}\bigg)
\end{align}
for every $y\in \R^n$ and $\sigma$ such that $|\sigma|=1$, and this yields
\[(\As+\lambda I) \Psi_{\lambda, \sigma} (y)= \int_0^\infty e^{-t}t^{(n+\lambda-2)/2} (\As+\lambda I)( e^{2(\sigma,y) \sqrt{t}-|y|^2}) \, \dd t = 0\]
for every such $y$ and $\sigma$.\footnote{Equivalently,~\eqref{eqcancell} can be formulated by saying that $f_\sigma^\lambda(y,t) = e^{-t} t^{(n+\lambda)/2}e^{2(\sigma,y)\sqrt{t}}$ is a solution of the equation
\[
(\Ls +(\lambda + n)I) u_\sigma = 2t \frac{\dd}{\dd t} u_\sigma.
\]}
Moreover, by~\eqref{eqcancell}
\begin{align*}\label{psiatomo}
(\As +\lambda I)\Phi_{\lambda,\sigma}(y) &= e^{-|y|^2} \int_0^\infty \log(1/t) \frac{\dd}{\dd t} \bigg( 2e^{2(\sigma, y)\sqrt{t} -t} t^{(n+\lambda)/2}\bigg)\, \dd t = 2 \Psi_{\lambda,\sigma}(y),
\end{align*}
where the last equality holds by integration by parts. 
As for (2), observe that $\Psi_{\lambda,\sigma}$ is the restriction to $\R^n$ of an entire function on $\C^n$. Thus, if it were constant on some open subset of $\R^n$, it would be constant everywhere. Since 
\[ \nabla \Psi_{\lambda,\sigma} (y) = -2y\Psi_{\lambda,\sigma}(y) +2\sigma \Psi_{\lambda+1,\sigma}(y)
\]
we have $|\nabla \Psi_{\lambda,\sigma} (0)| =2 \Psi_{\lambda+1,\sigma}(0) >0$. Thus, $\Psi_{\lambda,\sigma}$ is not constant in a neighbourhood of the origin.\par
To prove (3), denote by $C^\infty_{c,0}(B)$ the space of functions in
$C^\infty_c(B)$ with integral zero. Since the orthogonal of $C^\infty_{c,0}(B)$ in $L^2(B,\gamma_{-1})$ is the space of functions that are constant on $B$, (3) follows from (2).
\end{proof}

It is easily seen that both $\Psi_{\lambda,\sigma}$ and $\Phi_{\lambda,\sigma}$ are in $L^\infty_\loc$ for every $\lambda \geq 0$ and $\sigma \in S^{n-1}$, since they are smooth. In particular, they are in $L^2_\loc$. Thus, the integral $\int\Psi_{\lambda,\sigma} f \,\dd \gamma_{-1}$ is well defined for every $f\in L^2(\gamma_{-1})$ with compact support. However, neither $\Psi_{\lambda,\sigma}$ nor $\Phi_{\lambda,\sigma}$ are in $L^2(\gamma_{-1})$. If they were, indeed, by Lemma~\ref{Psi} (1) they would be in the kernels of $\As +\lambda I$ and $(\As+\lambda I)^2$, respectively, which contain only the null function.

Therefore, if $f \in L^2(\gamma_{-1})$ has compact support in a ball $B$, we shall denote the integrals $\int\Psi_{\lambda,\sigma} f \,\dd \gamma_{-1}$ and $\int\Phi_{\lambda,\sigma} f \,\dd \gamma_{-1}$ by $(\Psi_{\lambda,\sigma}, f)_{L^2(B,\gamma_{-1})}$ and $(\Phi_{\lambda,\sigma}, f)_{L^2(B,\gamma_{-1})}$, respectively, to emphasize that they \emph{are not} inner products in $L^2(\R^n,\gamma_{-1})$.

\begin{corollary}\label{corlnotm} Let $B$ be an admissible ball.
\begin{itemize}
\item[\emph{(1)}] If $\psi$ is a function in $C_c^\infty(B)$ with integral zero with respect to $\gamma_{-1}$, then $(\As +\lambda I)\psi$ is a multiple of an $X^1_\lambda$-atom.
\item[\emph{(2)}] If $\lambda \neq \mu$ there exists an $X^1_\lambda$-atom which 
is not an $X^1_\mu$-atom.
\end{itemize}
\end{corollary}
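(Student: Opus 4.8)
The plan is to reduce both statements to integration by parts, exploiting that $\As$ is formally self-adjoint with respect to $\gamma_{-1}$ and that, by elliptic regularity, any $u$ with $(\As+\lambda I)u$ constant on an open set is smooth there; this is the same mechanism used for Theorem~\ref{support:pres}, adapted from the Ornstein--Uhlenbeck case of~\cite{B}.

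For (1) I would set $f\coloneqq(\As+\lambda I)\psi$ and verify the defining conditions of an $X^1_\lambda$-atom up to a normalising constant. Since $\As$ is a differential operator, $f\in C^\infty_c(B)$ with $\supp f\subseteq\supp\psi$, so $f$ is supported in the admissible ball $B$ and has finite $L^2(\gamma_{-1})$-norm; dividing $f$ by $\gamma_{-1}(B)^{1/2}\|f\|_{L^2(\gamma_{-1})}$ (the case $f=0$ being trivial) arranges the size bound~\rmi\ of Definition~\ref{atom-lambda}. The substantive point is the orthogonality $f\in q^2_\lambda(\bar B)^\perp$: given $u\in q^2_\lambda(\bar B)$, extend it to $\tilde u\in q^2_\lambda(\Omega')$ for a bounded open $\Omega'\supset\bar B$, note $\tilde u\in C^\infty(\Omega')$ and $(\As+\lambda I)\tilde u\equiv c$ there, and move the operator off $\psi$ by integration by parts (boundary terms vanish since $\psi$ is compactly supported in $\Omega'$):
\[
(f,u)_{L^2(\bar B,\gamma_{-1})}=\big(\psi,(\As+\lambda I)\tilde u\big)_{L^2(B,\gamma_{-1})}=\bar c\int_B\psi\,\dd\gamma_{-1}=0
\]
by the cancellation hypothesis on $\psi$. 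Hence $f$ is a multiple of an $X^1_\lambda$-atom.

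For (2), assume $\lambda\neq\mu$, fix $\sigma\in S^{n-1}$, and use Lemma~\ref{Psi}~(3) with $\mu$ in place of $\lambda$ to obtain $\psi\in C^\infty_c(B)$ with integral zero with respect to $\gamma_{-1}$ and $(\psi,\Psi_{\mu,\sigma})_{L^2(B,\gamma_{-1})}\neq0$; in particular $\psi\neq0$. By part (1), $f\coloneqq(\As+\lambda I)\psi$ equals $\kappa\,a$ for some $\kappa\neq0$ and some $X^1_\lambda$-atom $a$ supported in $B$, and I would show that $a$ violates condition~\rmii\ of Definition~\ref{atom-lambda} with $\mu$ in place of $\lambda$. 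By Lemma~\ref{Psi}~(1), $\Psi_{\mu,\sigma}$ is $\mu$-harmonic on $\R^n$, so $\As\Psi_{\mu,\sigma}=-\mu\Psi_{\mu,\sigma}$, and its restriction to any ball lies in $q^2_\mu$ of that ball; using $\kappa\,a=(\As+\lambda I)\psi$ and the same integration by parts as in (1) (against the compactly supported smooth $\psi$, with $\Psi_{\mu,\sigma}$ smooth and real),
\[
\kappa\,(a,\Psi_{\mu,\sigma})_{L^2(B,\gamma_{-1})}=\big(\psi,(\As+\lambda I)\Psi_{\mu,\sigma}\big)_{L^2(B,\gamma_{-1})}=(\lambda-\mu)\,(\psi,\Psi_{\mu,\sigma})_{L^2(B,\gamma_{-1})}\neq0,
\]
since $\lambda\neq\mu$. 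Therefore for every admissible ball $B'$ with $\supp a\subseteq B'$ one has $\Psi_{\mu,\sigma}|_{\bar{B'}}\in q^2_\mu(\bar{B'})$ and $(a,\Psi_{\mu,\sigma})_{L^2(\bar{B'},\gamma_{-1})}=(a,\Psi_{\mu,\sigma})_{L^2(B,\gamma_{-1})}\neq0$, so $a\notin q^2_\mu(\bar{B'})^\perp$: the atom $a$ is an $X^1_\lambda$-atom which is not an $X^1_\mu$-atom.

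I do not expect a genuine obstacle here; the only points needing care are bookkeeping ones. First, $\Psi_{\mu,\sigma}\notin L^2(\gamma_{-1})$, so the pairings with it must be read as the local integrals $(\,\cdot\,,\Psi_{\mu,\sigma})_{L^2(B,\gamma_{-1})}$ introduced just before the corollary — this is harmless precisely because every integration by parts is carried out against a compactly supported smooth function. Second, in (2) one must check that the failure of the $\mu$-cancellation of $a$ is independent of the admissible ball used to test it, which holds because $\Psi_{\mu,\sigma}$ is $\mu$-harmonic on all of $\R^n$ and the relevant integral is concentrated on $\supp a\subseteq\bar B$.
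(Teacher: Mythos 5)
Your proof is correct and follows essentially the same route as the paper: integration by parts against $\psi$ to verify the orthogonality in (1), and then the computation $((\As+\lambda I)\psi,\Psi_{\mu,\sigma})_{L^2(B,\gamma_{-1})}=(\lambda-\mu)(\psi,\Psi_{\mu,\sigma})_{L^2(B,\gamma_{-1})}\neq 0$ together with Lemma~\ref{Psi} to produce the counterexample in (2). The extra care you take — invoking elliptic regularity for the smoothness of quasi-harmonic functions, noting the normalisation explicitly, and checking that the failure of $\mu$-cancellation persists for every admissible ball containing $\supp a$ — is all implicit in the paper's terser version and is a reasonable addition, not a divergence.
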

\begin{proof}
To prove (1) we only need to show that $(\As +\lambda I)\psi\in q^2_\lambda(B)^\perp$. Indeed, if $v\in q^2_\lambda(B)$ and $(\As+\lambda I)v=c$ on $B$,
\[
(v,(\As +\lambda I)\psi)_{L^2(B,\gamma_{-1})}= ((\As +\lambda I)v,\psi)_{L^2(B,\gamma_{-1})}= (c,\psi)_{L^2(B,\gamma_{-1})}=0.
\]
To prove (2), observe that by Lemma~\ref{Psi}, (3) there exists a function $\psi\in C_c^\infty(B)$ with integral zero with respect to $\gamma_{-1}$ that is not orthogonal to 
$\Psi_{\mu,\sigma}$. By (1) the function $(\As +
\lambda I)\psi$ is, up to a constant factor, an $X^1_\lambda$-atom, but $(\As +\lambda I)\psi \notin q^2_\mu(B)^\perp$ since by 
integration by parts
\[
((\As +\lambda I)\psi, \Psi_{\mu,\sigma})_{L^2(B,\gamma_{-1})}= (\lambda -\mu)(\psi,\Psi_{\mu,\sigma})_{L^2(B,\gamma_{-1})}\neq 0
\]
and $\Psi_{\mu,\sigma} \in h^2_\mu(B)\subset q^2_\mu(B)$ by Lemma~\ref{Psi}, (1).
\end{proof}

The following two lemmata highlight the importance of the functions $\Psi_{\lambda,\sigma}$ and $\Phi_{\lambda,\sigma}$. In particular, Lemma~\ref{lemmaunbounded0} below concerns their role in the unboundedness results of the imaginary powers, while Lemma~\ref{lemmaunbounded} that for the Riesz transforms. They will be used in Theorems~\ref{teo:X1Impow} and~\ref{teo:X1Riesz} respectively. 

\begin{lemma}\label{lemmaunbounded0}
Let $B=B(0,1)$, $u\in \R\setminus \{0\}$ and $f\in L^1(B)$ with $\supp(f)\subseteq \bar{B}$. If there exists $\sigma_0 \in S^{n-1}$ such that $(\Psi_{0,\sigma_0},f)_{L^2(B,\gamma_{-1})}\neq 0$, then $\As^{iu} f \notin L^1(\gamma_{-1})$.
\end{lemma}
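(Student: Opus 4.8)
The plan is to compute $\As^{iu}f$ explicitly on the complement of $\bar B$ using the kernel formula~\eqref{impowkernel} (with $\lambda=0$), and to show that the resulting function is not integrable near infinity against $\gamma_{-1}$. The key point is that, for $x$ far away, the kernel $k_{\As^{iu}}(x,y)$ with $y$ ranging over $\bar B$ factorizes, up to controlled error, into an $x$-dependent oscillatory factor of modulus comparable to $e^{-|x|^2}\cdot(\text{something decaying too slowly})$ times the pairing of $f$ against (essentially) $\Psi_{0,\sigma}$, where $\sigma = x/|x|$. More precisely, fix $x$ with $|x|$ large and set $\sigma=x/|x|$, $\rho=|x|$. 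In~\eqref{impowkernel} we have $|\psi(r,x,y)|^2 = \frac{|rx-y|^2}{1-r^2}$; writing $r=e^{-t/\rho^2}$-type scaling, or more directly changing variables $r \mapsto$ a variable centered near $r=1$, one sees that the mass of the $r$-integral concentrates, as $\rho\to\infty$, in a window where $1-r \sim 1/\rho^2$. Substituting $1-r^2 \approx s/\rho^2$ (equivalently an appropriate $t=\rho^2(1-r)$-type change) turns $e^{-|\psi(r,x,y)|^2}$ into $e^{-\frac{\rho^2 - 2\rho(\sigma,y)+\dots}{s/\rho^2}\cdot(\dots)}$, and after extracting the dominant Gaussian factor $e^{-\rho^2/(\text{window})}$ one recovers, in the limit, precisely an integral of the form $\int_0^\infty e^{-s}s^{(n-2)/2}e^{2(\sigma,y)\sqrt s}\,\dd s$ against the remaining $(-\log r)^{-iu-1}$ piece. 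I would carry this out so that
\[
\As^{iu}f(x) = c(u)\,e^{-|x|^2}\,\omega_u(x)\,\big[(\Psi_{0,\sigma_x},f)_{L^2(B,\gamma_{-1})} + o(1)\big]
\]
as $|x|\to\infty$, where $\sigma_x=x/|x|$ and $\omega_u(x)$ is an oscillatory amplitude that is \emph{bounded below in modulus} by a fixed positive constant times some slowly-decaying power of $|x|$ (the $(-\log r)^{-iu-1}\sim (1-r)^{-iu-1}$ factor produces, after the substitution, a factor like $\rho^{2iu}\cdot\rho^{-2}$ or so, whose modulus is a fixed power of $\rho$, not exponentially small).

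The second step is the integrability obstruction. By hypothesis $(\Psi_{0,\sigma_0},f)_{L^2(B,\gamma_{-1})}\neq 0$; since $\sigma\mapsto(\Psi_{0,\sigma},f)_{L^2(B,\gamma_{-1})}$ is continuous on $S^{n-1}$ (indeed real-analytic, as $\Psi_{0,\sigma}$ depends analytically on $\sigma$), it is bounded below in modulus by a constant $\delta>0$ on an open cap $\Sigma\subseteq S^{n-1}$ around $\sigma_0$. Hence for $x$ in the cone $\{x : x/|x|\in\Sigma,\ |x|\ge R\}$ with $R$ large, combining with the asymptotics above,
\[
|\As^{iu}f(x)| \gtrsim \delta\, e^{-|x|^2}\,|x|^{-M}
\]
for some fixed $M$. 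Integrating against $\dd\gamma_{-1}(x)=\pi^{n/2}e^{|x|^2}\dd x$ over that cone kills the Gaussian exactly, leaving $\int_{|x|\ge R,\,x/|x|\in\Sigma}|x|^{-M}\,\dd x=\infty$ (the integrand decays only polynomially in $|x|$ while the cone has infinite volume). Therefore $\As^{iu}f\notin L^1(\gamma_{-1})$.

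The main obstacle is the first step: making the asymptotic expansion of the kernel integral~\eqref{impowkernel} rigorous and \emph{uniform} in $\sigma$ over a cap, with an explicit error term that is genuinely $o(1)$ relative to the main term. In particular one must (a) justify that the contributions of $r$ away from the critical window $1-r\sim|x|^{-2}$ are negligible — this uses Lemma~\ref{lemmaKz}-type Gaussian bounds showing exponential decay in $|x|^2$ there — and (b) track the $(-\log r)^{-iu-1}$ factor carefully through the change of variables, since it is the factor whose modulus keeps the amplitude $\omega_u(x)$ from being exponentially small and also the factor that makes the $u\ne 0$ hypothesis essential (for $u=0$ one is looking at $\As^{-0}=I$-adjacent behavior and the argument structure differs). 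Dominated convergence, applied after the rescaling and using the Gaussian tail bounds to produce an integrable majorant uniformly in $x$, should close the limit $\int_0^\infty(\cdots)\,\dd r \to \int_0^\infty e^{-s}s^{(n-2)/2}\log(\cdots)^{-iu-1}e^{2(\sigma,y)\sqrt s}\,\dd s$ and hence identify the main term with $(\Psi_{0,\sigma},f)_{L^2(B,\gamma_{-1})}$ up to the harmless constant and oscillatory amplitude.
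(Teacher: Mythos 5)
Your overall strategy---asymptotic expansion of $\As^{iu}f(x)$ for $|x|\to\infty$, identification of the leading term with the pairing $(\Psi_{0,\sigma},f)_{L^2(B,\gamma_{-1})}$, and conclusion via the logarithmic divergence of the angular integral---matches the paper's proof. However, your proposed change of variables concentrates the integral in the wrong place, and this is a genuine error rather than a loose heuristic.

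You claim the mass of the $r$-integral concentrates where $1-r\sim 1/\rho^2$ (equivalently, you substitute $1-r^2\approx s/\rho^2$). This is wrong. With $y$ bounded and $|x|=\rho$ large, the dominant exponential in the kernel is $\exp\bigl(-|rx-y|^2/(1-r^2)\bigr)$, and for $r$ near $1$ one has $|rx-y|\approx|x-y|\approx\rho$, so the exponent is $\approx -\rho^4/s$, giving a contribution that is \emph{super-Gaussian small} rather than dominant. The paper in fact explicitly separates the range $r\in(1/2,1)$ and shows its contribution $I_2$ is finite (hence negligible). The true critical window is small $r$: the paper uses $t=r^2\rho^2/(1-r^2)$, and for $t$ bounded and $\rho$ large this gives $r\approx\sqrt{t}/\rho$ (so $1-r\approx 1$). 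With that correct scale, $-\log r\approx\log\rho$, so the factor $(-\log r)^{-iu-1}$ has modulus $\approx(\log\rho)^{-1}$, not the polynomial factor $\rho^{2iu}\rho^{-2}$ you obtain from $(1-r)^{-iu-1}$. Your proposed substitution, carried out rigorously, would therefore not close: the piece you focus on vanishes at infinity. Relatedly, your final lower bound $|\As^{iu}f(x)|\gtrsim\delta\,e^{-|x|^2}|x|^{-M}$ ``for some fixed $M$'' is too coarse to conclude: divergence of $\int|x|^{-M}\dd x$ over a cone requires $M\le n$, and the actual decay rate is $|x|^{-n}(\log|x|)^{-1}$, so the divergence is only logarithmic and the $(\log\rho)^{-1}$ factor must be tracked rather than absorbed into a polynomial. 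The correct asymptotic, after the substitution $t=r^2\rho^2/(1-r^2)$ and dominated convergence on $t\in(0,\rho^2/3)$, yields
\[
I_1^1(\rho)\sim\frac{c}{\rho\,\log(\rho^2)}\int_{S^{n-1}}\bigl|(\Psi_{0,\sigma},f)_{L^2(B,\gamma_{-1})}\bigr|\,\dd\Sigma(\sigma),
\]
whose $\dd\rho$-integral diverges; this is the argument you should be aiming for.
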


\begin{proof}
Let $f$ be as in the statement. Then, for every $x\notin \bar{B}$
\begin{align*}
\As^{iu}& f(x) = 
%\int_{\R^n} k_{\As^{iu}}(x,y)f(y)\gamma_{-1}(y)\, \dd y \\&= 
c(u,n) e^{-|x|^2}\int_{B} \int_0^1 \frac{r^{n-1}(-\log r)^{-iu-1}}{(1-r^2)^{n/2}} e^{-\frac{|rx-y|^2}{1-r^2}} f(y)e^{|y|^2}\, \dd r\, \dd y
\end{align*}
so that
\begin{align*}
\|\As^{iu}f\|_{L^1(\gamma_{-1})}
%&= c(u,n) \int_{\R^n} \abs*{\As^{iu}a(x)}\, e^{|x|^2} dx 
%\\& \geq c(u,n) \int_{(5B)^c} \abs*{\As^{iu}a(x)}\, e^{|x|^2}dx
%\\& 
&\geq c(u,n) \bigg\{\int_{(5B)^c} \bigg|\int_{B} \int_0^{\frac{1}{2}} \dots \, \dd r\, \dd y\bigg| \, \dd x -\int_{(5B)^c} \bigg|\int_{B} \int_{\frac{1}{2}}^1 \dots\, \dd r\, \dd y\bigg|\, \dd x\bigg\}
\\&= c(u,n)\{ I_1 -I_2\}.
\end{align*}
The choice of $5B$ is merely technical. We shall prove that $I_1=\infty$ while $I_2<\infty$. As for $I_2$, since $(x,y)\leq |x|$ for $y\in B$, we get
\begin{align*}
I_2\leq \int_{(5B)^c} \int_{B} \int_{1/2}^1 \frac{(-\log r)^{-1}}{(1-r^2)^{n/2}} e^{-\frac{r^2|x|^2}{1-r^2}} e^{-\frac{r^2|y|^2}{1-r^2}} e^{\frac{2r|x|}{1-r^2}}|f(y)|\, \dd r\, \dd y \, \dd x. 
\end{align*}
Since $e^{-r^2|y|^2/(1-r^2)} \leq 1$ and $f\in L^1(B)$, by changing the order of integration and passing to spherical coordinates
\begin{multline*}
I_2 
%\leq \int_{1/2}^1 \frac{(-\log r)^{-1}}{(1-r^2)^{n/2}} \int_{B(0,5)^c} e^{-\frac{r^2|x|^2}{1-r^2}} \bigg[ e^{\frac{2r|x|}{1-r^2}} - e^{-\frac{2r|x|}{1-r^2}} \bigg]\, \dd x \, \dd r \\&
\leq \int_{1/2}^1 \frac{(-\log r)^{-1}}{(1-r^2)^{n/2}} \int_{(5B)^c} e^{-\frac{r^2|x|^2}{1-r^2}+ \frac{2r|x|}{1-r^2}}\, \dd x \, \dd r  \lesssim \int_{1/2}^1 \frac{(-\log r)^{-1}}{(1-r^2)^{n/2}} \int_5^\infty e^{-\frac{r^2\rho^2}{1-r^2}+ \frac{2r\rho}{1-r^2}}\rho^{n-1}\, \dd \rho \, \dd r.
\end{multline*}
Observe now that for $r\in (1/2,1)$ and every $\rho\geq 0$
\[
-\frac{r^2\rho^2}{1-r^2}+ \frac{2r\rho}{1-r^2} \leq -\frac{r\rho^2}{2(1-r^2)}+ \frac{2r\rho}{1-r^2} = \frac{r(-2\rho^2 +8\rho) }{4(1-r^2)}\leq \frac{4r}{1-r^2} -\frac{r}{4(1-r^2)}\rho^2
\]
since $-2\rho^2 +8\rho \leq 16 -\rho^2$. Observe also that for every $s_0>0$ there is a constant $c$ depending only on $s_0$ and $n$ such that $\exp(-s^2/4)s^{n-1} \leq c \,s\exp(-s^2/5)$ for every $s\geq s_0$. Therefore, since $\rho \sqrt{\frac{r}{1-r^2}}\geq 5\sqrt{2/3}$, there exists a $c$ such that
\begin{align*}
\exp\bigg(-\frac{r}{4(1-r^2)}\rho^2\bigg) \rho^{n-1} \leq c \bigg(\frac{1-r^2}{r}\bigg)^{(n-1)/2} \exp\bigg(-\frac{r}{5(1-r^2)}\rho^2\bigg)\rho \sqrt{\frac{r}{1-r^2}}.
\end{align*}
Thus
\begin{align*}
I_2& \lesssim \int_{1/2}^1 \frac{ e^{\frac{4r}{1-r^2}}}{(-\log r)} \int_5^\infty \frac{\rho r}{1-r^2}e^{-\frac{r}{5(1-r^2)}\rho^2} \, \dd \rho \, \dd r = c \int_{1/2}^1 \frac{e^{\frac{4r}{1-r^2}}}{(-\log r)}e^{-\frac{5r}{1-r^2}}\, \dd r<\infty.
\end{align*}
We now look at $I_1$, which we write as $I_1= \int_5^\infty I_1^1(\rho) \dd\rho$, where
\[
I_1^1(\rho)=\rho^{n-1} \int_{S^{n-1}}\bigg|\int_{B} \int_0^{1/2} \frac{r^{n-1}(-\log r)^{-iu-1}}{(1-r^2)^{n/2}} e^{-\frac{r^2 \rho^2}{1-r^2}} e^{-\frac{r^2 |y|^2}{1-r^2}} e^{\frac{2r\rho(\sigma, y)}{1-r^2}} f(y)\, \dd r\, \dd y \bigg|\, \dd \Sigma(\sigma).
\]
We shall find the asymptotic behaviour of $I_1^1(\rho)$ when $\rho \to \infty$. We make the substitution $r^2 \rho^2/(1-r^2)=t$ in the integral over $r$, and get
$$
I_1^1(\rho) = \frac{1}{\rho \log(\rho^2)}\int_{S^{n-1}} \bigg| \int_0^{\rho^2/3} F(\rho,t,\sigma) \, \dd t\bigg| \, \dd \Sigma(\sigma)
$$
where
\[
F(\rho,t,\sigma)= 2^{iu} \bigg[ \frac{\log(1+\rho^2/t)}{\log \rho^2}\bigg]^{-iu-1}e^{-t} t^{(n-2)/2} \bigg( \int_{B} e^{-\frac{t|y|^2}{\rho^2}}e^{2(\sigma,y) \sqrt{t(1+t/\rho^2)}}\, f(y)\, \dd y\bigg).
\]
We want to apply the dominated convergence theorem for $\rho \to \infty$. Indeed, observe that
\begin{equation}\label{logestimate}
\bigg|\frac{\log \rho^2}{\log (1+\rho^2/t)}\bigg|
 \leq 1+ \frac{|\log t|}{\log 4}
\end{equation}
for every $t\in (0,\rho^2/3)$ and since $t/\rho^2\leq 1/3$,
\begin{equation*}
\abs*{(\sigma,y)\sqrt{t(1+t/\rho^2)}} \leq c\, \sqrt{t}
\end{equation*}
for every $\sigma \in S^{n-1}$ and $y\in B$. Therefore, since $f\in L^1(B)$, for every $t\in (0,\rho^2/3)$
\begin{align*}
|F(\rho,t, \sigma)| &\leq \bigg(1+ \frac{\abs{\log t}}{\log 4}\bigg)e^{-
t} t^{(n-2)/2}e^{c\sqrt{t}}\int_{B} \abs*{ f(y)}\, \dd y 
\\& \lesssim \bigg(1+ \frac{\abs{\log t}}{\log 4}\bigg)e^{-t} t^{(n-2)/2} e^{c\sqrt{t}} 
\eqqcolon g(t).
\end{align*}
Since the function $g$ is integrable on $(0,\infty)\times 
S^{n-1}$, by dominated convergence
\begin{align*}
\lim_{\rho \to \infty} \int_{S^{n-1}} \bigg| \int_0^{\rho^2/3} F(\rho,t,\sigma) \, \dd t\bigg| \, \dd \Sigma(\sigma)
%& = \int_{S^{n-1}}\bigg|\int_0^\infty e^{-t}t^{(n-2)/2} \bigg(\int_{B}e^{2(\sigma,y) \sqrt{t}} f(y)\, \dd y \bigg) \, \dd t\bigg| \, \dd \Sigma(\sigma)\\& 
= \int_{S^{n-1}}
\abs*{(\Psi_{0,\sigma},f)_{L^2(B,\gamma_{-1})}} \, \dd \Sigma(\sigma).
\end{align*}
The integral over $S^{n-1}$ is strictly positive, since the function $
\sigma \mapsto (\Psi_{0,\sigma},f)_{L^2(B,\gamma_{-1})}$ is continuous and by assumption there exists a $\sigma_0\in S^{n-1}$ such that $
(\Psi_{0,\sigma_0},f)_{L^2(B,\gamma_{-1})}\neq 0$. Therefore
$$
I_1^1(\rho) \sim \frac{1}{\rho \log (\rho^2)}\int_{S^{n-1}}
\abs*{(\Psi_{0,\sigma},f)_{L^2(B,\gamma_{-1})}} \, \dd \Sigma(\sigma) \quad \mbox{for $\rho \to \infty$},
$$
and
$$
I_1=\int_5^\infty I^1_1(\rho)\,\dd \rho=\infty.
$$
This concludes the proof.
\end{proof}

The following lemma is the counterpart of Lemma~\ref{lemmaunbounded0} for the Riesz transforms, but it requires a more sophisticated analysis.
\begin{lemma}\label{lemmaunbounded}
Let $B=B(0,1)$, $j=1,\dots, n$ and $f\in L^1(B)$ with $\supp(f)\subseteq \bar{B}$.
\begin{itemize}
\item[\emph{(i)}] For every $\lambda \in [0,1]$, if there exists $\sigma_0 \in S^{n-1}$ such that $(\Psi_{\lambda,\sigma_0},f)_{L^2(B,\gamma_{-1})}\neq 0$, then $(\Rs_\lambda)_j f \notin L^1(\gamma_{-1})$.
\item[\emph{(ii)}] For every $\lambda \in [0,1)$, if $(\Psi_{\lambda,\sigma},f)_{L^2(B,\gamma_{-1})}=0$ for every $\sigma \in S^{n-1}$ but there exists $\sigma_0 \in S^{n-1}$ such that $(\Phi_{\lambda,\sigma_0},f)_{L^2(B,\gamma_{-1})}\neq 0$, then $(\Rs_\lambda)_j f\notin L^1(\gamma_{-1})$.
\end{itemize}
\end{lemma}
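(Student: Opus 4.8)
The plan is to adapt the scheme of the proof of Lemma~\ref{lemmaunbounded0}; the new ingredient is the factor $x_j-ry_j$ in the Riesz kernel, which after the usual change of variables produces a weight $\sigma_j$ on the sphere $S^{n-1}$ and, because of the different powers of $r$ and $1-r^2$ in~\eqref{Rieszkernel}, an extra decay $\rho^{-\lambda}$ in the radial variable. Starting from~\eqref{Rieszkernel}, for $x\notin\bar B$ one has
\[
|(\Rs_\lambda)_jf(x)|\,\gamma_{-1}(x)=C\left|\int_B\int_0^1 \frac{r^{n+\lambda-1}(x_j-ry_j)}{(1-r^2)^{(n+2)/2}\sqrt{-\log r}}\,e^{-\frac{r^2|x|^2}{1-r^2}}e^{\frac{2r(x,y)}{1-r^2}}e^{-\frac{r^2|y|^2}{1-r^2}}f(y)\,\dd r\,\dd y\right|,
\]
the factor $e^{-|x|^2}$ cancelling $\gamma_{-1}(x)$. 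Splitting the $r$-integral at $r=1/2$ and calling the two resulting pieces of $(\Rs_\lambda)_jf$ by $\tilde A$ and $\tilde B$, from $|(\Rs_\lambda)_jf|\geq|\tilde A|-|\tilde B|$ it suffices to show $\int_{(5B)^c}|\tilde B|\,\dd x<\infty$ and $\int_{(5B)^c}|\tilde A|\,\dd x=\infty$. The first goes as the estimate of $I_2$ in Lemma~\ref{lemmaunbounded0}: the only new feature is the more singular factor $(1-r^2)^{-(n+2)/2}(-\log r)^{-1/2}$ near $r=1$, which is harmless since for $|x|\geq5$, $r\geq1/2$ one has $r^2|x|^2-2r|x|\geq c|x|^2$, so the Gaussian $e^{-(r^2|x|^2-2r(x,y))/(1-r^2)}\leq e^{-c|x|^2/(1-r^2)}$ dominates any power of $(1-r^2)^{-1}$.

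For the second integral I would pass to polar coordinates $x=\rho\sigma$, restrict to $\rho>5$, and substitute $t=r^2\rho^2/(1-r^2)$, so $r=\sqrt t/\sqrt{\rho^2+t}$, $1-r^2=\rho^2/(\rho^2+t)$, $-\log r=\tfrac12\log(1+\rho^2/t)$, $r\rho/(1-r^2)=\sqrt{t(1+t/\rho^2)}$, and $r\in(0,1/2)$ becomes $t\in(0,\rho^2/3)$. Collecting all powers of $\rho$ (from $\dd x=\rho^{n-1}\dd\rho\,\dd\Sigma(\sigma)$, from $r^{n+\lambda-1}(1-r^2)^{-(n+2)/2}\dd r$, from $x_j-ry_j=\rho\sigma_j+O(\sqrt t/\rho)$, and from $(-\log r)^{-1/2}$) one is left with
\[
\int_{(5B)^c}|\tilde A(x)|\,\dd x=C\int_5^\infty \frac{\rho^{-\lambda}}{\sqrt{\log\rho^2}}\,J(\rho)\,\dd\rho,\qquad J(\rho)=\int_{S^{n-1}}\left|\int_B\int_0^{\rho^2/3}\Theta_\rho(t,y,\sigma)\,\dd t\,\dd y\right|\,\dd\Sigma(\sigma),
\]
where $\Theta_\rho(t,y,\sigma)=t^{(n+\lambda-2)/2}e^{-t}f(y)\,\Xi_\rho(t,y,\sigma)$ and $\Xi_\rho$ is a product of elementary factors — among them $(1+t/\rho^2)^{-\lambda/2}$, $\sigma_j-\tfrac{\sqrt t}{\rho\sqrt{\rho^2+t}}y_j$, $e^{2(\sigma,y)\sqrt{t(1+t/\rho^2)}}$, $e^{-|y|^2t/\rho^2}$, $\sqrt{\log\rho^2}/\sqrt{\log(1+\rho^2/t)}$ — converging as $\rho\to\infty$, locally uniformly, to $\Xi_\infty:=\tfrac{\sqrt2}{2}\sigma_j e^{2(\sigma,y)\sqrt t}$ and, by~\eqref{logestimate}, dominated by $C(1+|\log t|)^{1/2}e^{c\sqrt t}$; thus $|\Theta_\rho|$ is dominated by an integrable function on $(0,\infty)\times B$, uniformly in $\rho$ and $\sigma$.

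To prove (i), I would use dominated convergence and the identity $\int_0^\infty t^{(n+\lambda-2)/2}e^{-t}e^{2(\sigma,y)\sqrt t}\,\dd t=e^{|y|^2}\Psi_{\lambda,\sigma}(y)$ (from~\eqref{nonH1}) to obtain $J(\rho)\to C_1:=\tfrac{\sqrt2}{2}\pi^{-n/2}\int_{S^{n-1}}|\sigma_j|\,|(\Psi_{\lambda,\sigma},f)_{L^2(B,\gamma_{-1})}|\,\dd\Sigma(\sigma)$; since $\sigma\mapsto(\Psi_{\lambda,\sigma},f)_{L^2(B,\gamma_{-1})}$ is continuous and $\{\sigma_j=0\}$ is $\Sigma$-null, the hypothesis forces $C_1>0$, so $\int_{(5B)^c}|\tilde A|\,\dd x=\infty$ because $\int_5^\infty\rho^{-\lambda}(\log\rho^2)^{-1/2}\,\dd\rho=\infty$ for every $\lambda\in[0,1]$. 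For (ii) the hypothesis $(\Psi_{\lambda,\sigma},f)_{L^2(B,\gamma_{-1})}=0$ makes this limit vanish, i.e. $\int_B\int_0^\infty\Theta_\infty=0$, and one must extract the next term: the plan is to use
\[
\frac{\sqrt{\log\rho^2}}{\sqrt{\log(1+\rho^2/t)}}=1+\frac{\log t}{2\log\rho^2}+(\text{lower order}),
\]
valid uniformly on $\delta(\rho)\leq t\leq\sqrt\rho$ for a suitable $\delta(\rho)\to0$, together with $\int_0^\infty(\log t)\,t^{(n+\lambda-2)/2}e^{-t}e^{2(\sigma,y)\sqrt t}\,\dd t=-e^{|y|^2}\Phi_{\lambda,\sigma}(y)$ (from~\eqref{nonX1}), to get, uniformly in $\sigma$,
\[
\int_B\int_0^{\rho^2/3}\Theta_\rho\,\dd t\,\dd y=\frac{C(\sigma)}{\log\rho^2}+o\!\left(\frac1{\log\rho^2}\right),\qquad C(\sigma)=-\frac{\sqrt2}{4}\pi^{-n/2}\sigma_j\,(\Phi_{\lambda,\sigma},f)_{L^2(B,\gamma_{-1})}.
\]
This gives $J(\rho)=\kappa(\log\rho^2)^{-1}(1+o(1))$ with $\kappa=\tfrac{\sqrt2}{4}\pi^{-n/2}\int_{S^{n-1}}|\sigma_j|\,|(\Phi_{\lambda,\sigma},f)_{L^2(B,\gamma_{-1})}|\,\dd\Sigma>0$ (by continuity of $\sigma\mapsto(\Phi_{\lambda,\sigma},f)_{L^2(B,\gamma_{-1})}$, $\{\sigma_j=0\}$ being null, and $(\Phi_{\lambda,\sigma_0},f)_{L^2(B,\gamma_{-1})}\neq0$), hence $\int_{(5B)^c}|\tilde A|\,\dd x=\infty$ for every $\lambda\in[0,1)$, since $\int_5^\infty\rho^{-\lambda}(\log\rho^2)^{-3/2}\,\dd\rho=\infty$ there.

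The hard part is this last asymptotic analysis for (ii): once the leading term cancels, one must isolate the $(\log\rho^2)^{-1}$-term and control every error — the truncation error, the $O(\rho^{-2})$ corrections of the bounded factors, the second-order term of the logarithmic factor, and most delicately the region $t\lesssim\rho^{-2}$ (equivalently $r$ near $0$), where the expansion of $\log(1+\rho^2/t)$ around $\log\rho^2$ is no longer valid and whose contribution must be bounded directly by $O(\rho^{-1})$ — showing all of them to be $o((\log\rho^2)^{-1})$ \emph{uniformly in $\sigma$}, so that the limit passes through $\int_{S^{n-1}}|\cdot|\,\dd\Sigma$. The dichotomy $\lambda\in[0,1]$ in (i) versus $\lambda\in[0,1)$ in (ii) reflects exactly that $\int_5^\infty\rho^{-\lambda}(\log\rho^2)^{-1/2}\,\dd\rho$ diverges for all $\lambda\leq1$, whereas $\int_5^\infty\rho^{-\lambda}(\log\rho^2)^{-3/2}\,\dd\rho$ diverges only for $\lambda<1$.
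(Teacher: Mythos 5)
Your proposal follows essentially the same route as the paper: the same split at $r=1/2$, the same change of variables $t=r^2\rho^2/(1-r^2)$, the same identification of $\Psi_{\lambda,\sigma}$ and $\Phi_{\lambda,\sigma}$ as the leading and next-order coefficients, and the same observation that the prefactor $\rho^{-\lambda}(\log\rho)^{-1/2}$ governs the $\lambda\le 1$ versus $\lambda<1$ dichotomy. The error analysis you flag as ``the hard part'' of (ii) is carried out in the paper by proving a single two-term asymptotic claim for $J_{1,1}^\lambda(\rho)$ with remainder $O(1/\log^2\rho)$ on the \emph{entire} range $t\in(0,\rho^2/3)$ via the pointwise bound $|R(t,\rho)|\lesssim(1+|\log^3 t|)/\log^2\rho$, which stays integrable against $t^{(n+\lambda-2)/2}e^{-t}\,\dd t$; this avoids the separate truncation near $t\lesssim\rho^{-2}$ you propose, but your plan would also close the argument.
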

Notice that, while statement (i) holds for all $\lambda\in[0,1]$, statement (ii) does not hold for $\lambda=1$.
\begin{proof}
Let $\lambda\geq0$. For almost every $x\in \R^n$
\begin{align*}
(\Rs_\lambda)_j f(x) =c(n) e^{-|x|^2} \int_{B}\int_0^1 \frac{r^{n+\lambda-1}(-\log r)^{-1/2}}{(1-r^2)^{(n+2)/2}}(x_j-ry_j) e^{-\frac{|rx-y|^2}{1-r^2}}e^{|y|^2}f(y)\, \dd r\, \dd y,
\end{align*}
so that
\begin{align*}
\|(\Rs_\lambda)_j f\|_{L^1(\gamma_{-1})}&
%= c(n) \int_{(5B)^c} \abs*{(\Rs_\lambda)_j f(x)}\, e^{|x|^2}dx\\& 
\gtrsim \int_{(5B)^c} \bigg|\int_{B} \int_0^{\frac{1}{2}} \dots \, \dd r\, \dd y\bigg| \, \dd x -\int_{(5B)^c} \bigg|\int_{B} \int_{\frac{1}{2}}^1 \dots\, \dd r\, \dd y\bigg|\, \dd x\\&= J_1^\lambda -J_2^\lambda.
\end{align*}
The finiteness of $J_2^\lambda$ for every $\lambda\geq 0$ can be seen exactly as that of $I_2$ in the proof of Lemma~\ref{lemmaunbounded0}, and we omit the details. As for $J_1^\lambda$, 
\begin{align}\label{J1}
J_1^\lambda&= \int_5^\infty \rho^{n-1} \int_{S^{n-1}}\bigg|\int_{B} \int_0^{1/2} \frac{r^{n+\lambda-1}(-\log r)^{-1/2}}{(1-r^2)^{(n+2)/2}} e^{-\frac{r^2 \rho^2}{1-r^2}} e^{-\frac{r^2 |y|^2}{1-r^2}} \nonumber \\& \qquad \qquad\qquad\times(\rho \sigma_j-ry_j)e^{\frac{2r\rho(\sigma, y)}{1-r^2}}\, \dd rf(y)\, \dd y \bigg|\, \dd\Sigma(\sigma) \, \dd \rho = \int_5^\infty J_{1,1}^\lambda(\rho) \, \dd\rho.
\end{align}
We shall describe the asymptotic behaviour of $J_{1,1}^\lambda(\rho)$ when $\rho \to \infty$. We perform the change of variables $r^2\rho^2/(1-r^2)=t$, and get
\begin{align}\label{J11lambda}
J_{1,1}^\lambda(\rho)
%&= \rho^{n-1} \int_{S^{n-1}} \bigg|\frac{c}{\rho^{n+\lambda-1} \log(\rho^2)^{1/2}}\int_0^{\rho^2/3} \bigg[ \frac{\log(1+\rho^2/t)}{\log \rho^2}\bigg]^{-1/2}\frac{e^{-t} t^{(n+\lambda-2)/2}}{(1+t/\rho^2)^{1+\lambda/2}} \\& \qquad \qquad \times \bigg( \int_{B} e^{-\frac{t|y|^2}{\rho^2}} \bigg[\bigg(\sigma_i - \frac{y_j}{\rho^2}\sqrt{\frac{t}{1+t/\rho^2}}\bigg) e^{2(\sigma,y) \sqrt{t(1+t/\rho^2)}} \bigg]f(y)\, \dd y\bigg) \, \dd t\bigg|\, \dd\Sigma(\sigma) \\&
= \frac{c}{\rho^{\lambda} \sqrt{\log \rho}}\int_{S^{n-1}} \bigg| \int_0^{\rho^2/3} h_\lambda(\rho,t,\sigma) \, \dd t\bigg| \, \dd\Sigma(\sigma)
\end{align}
where
\begin{multline*}
h_\lambda(\rho, t,\sigma) = \bigg[ \frac{\log(1+\rho^2/t)}{\log \rho^2}\bigg]^{-1/2}\frac{e^{-t} t^{(n+\lambda-2)/2}}{(1+t/\rho^2)^{\lambda/2}} \\ \times \int_{B} e^{-\frac{t|y|^2}{\rho^2}} \bigg(\sigma_j - \frac{y_j}{\rho^2}\sqrt{\frac{t}{1+t/\rho^2}}\bigg) e^{2(\sigma,y) \sqrt{t(1+t/\rho^2)}} f(y)\, \dd y.
\end{multline*}
Thus, the asymptotic behaviour of $J_{1,1}^\lambda(\rho)$ when $\rho \to \infty$ can be recovered by that of the inner integral in~\eqref{J11lambda}. We shall need the first \emph{two} terms of the asymptotic expansion of $J_{1,1}^\lambda$, since the first term is not enough to prove the statement (ii) of the theorem.

We claim that $J_{1,1}^\lambda(\rho)$ equals
\begin{equation}\label{claimLemma}
\frac{c}{\rho^{\lambda} \sqrt{\log \rho}}\int_{S^{n-1}} \bigg|\sigma_j\bigg((\Psi_{\lambda, \sigma}, f)_{L^2(B,\gamma_{-1})}+ \frac{1}{4\log(\rho)} (\Phi_{\lambda, \sigma}, f)_{L^2(B,\gamma_{-1})}\bigg) + R(\sigma,\rho)\bigg| \, \dd \Sigma(\sigma)
\end{equation}
where $c\neq 0$ and $|R(\sigma,\rho)|\leq C/\log^2 \rho$.

Assuming the claim for the moment, we complete the proof. If there exists $\sigma_0 \in S^{n-1}$ such that $(\Psi_{\lambda,\sigma_0},f)_{L^2(B,\gamma_{-1})}\neq 0$, then, by continuity, we can find an open subset $U$ of $S^{n-1}$ such that $|\sigma_j|\geq \epsilon >0$ and $(\Psi_{\lambda,\sigma_0},f)_{L^2(B,\gamma_{-1})} \neq 0$ for every $\sigma \in U$. Thus the integral in~\eqref{claimLemma} is bounded below by 
\[
\int_{U}\bigg|\sigma_j\bigg((\Psi_{\lambda, \sigma}, f)_{L^2(B,\gamma_{-1})}+ \frac{1}{4\log(\rho)} (\Phi_{\lambda, \sigma}, f)_{L^2(B,\gamma_{-1})}\bigg) + R(\sigma,\rho)\bigg| \, \dd \Sigma(\sigma)\geq C>0.
\]
Hence $J_{1,1}^\lambda(\rho) \geq \frac{C}{\rho^\lambda \sqrt{\log \rho}}$ for all $\rho$ sufficiently large, and the integral $\int_5^\infty J_{1,1}^\lambda(\rho)\, \dd \rho$ diverges for all $\lambda \in [0,1]$. If $(\Psi_{\lambda,\sigma},f)_{L^2(B,\gamma_{-1})}=0$ for every $\sigma \in S^{n-1}$ but there exists $\sigma_0 \in S^{n-1}$ such that $(\Phi_{\lambda,\sigma_0},f)_{L^2(B,\gamma_{-1})}\neq 0$, the same continuity arguments used previously shows that there exists an open subset $U$ of $S^{n-1}$ such that $|\sigma_j|\geq \epsilon >0$ and $(\Phi_{\lambda,\sigma_0},f)_{L^2(B,\gamma_{-1})}\neq 0$ for every $\sigma \in U$. Thus, the integral in~\eqref{claimLemma} is bounded below by
\[
\int_{U}\bigg|\sigma_j \frac{1}{4\log(\rho)} (\Phi_{\lambda, \sigma}, f)_{L^2(B,\gamma_{-1})} + R(\sigma,\rho)\bigg| \, \dd \Sigma(\sigma)\geq \frac{c}{\log \rho}>0
\]
for all $\rho$ sufficiently large. Thus $J_{1,1}^\lambda(\rho) \geq \frac{c}{\rho^\lambda \log^{3/2} \rho}$ and $\int_5^\infty J_{1,1}^\lambda(\rho)\, \dd \rho$ diverges for all $\lambda \in [0,1)$.

It remains to prove the claim. To do this, we use Taylor formula. First of all, observe that for every $t \in (0,\rho^2/3)$ we have $t/\rho^2 \leq 1/3<1$. Then
\[e^{-t|y|^2/\rho^2} = 1+ R_1(y,t,\rho),\qquad (1+t/\rho^2)^{-\lambda/2} = 1+R_2^\lambda(t,\rho),
 \]
where $|R_1(y,t,\rho)|\leq C\, t/\rho^2$ and $|R_2^\lambda(t,\rho)|\leq C(\lambda)\, t/\rho^2$ for some constants $C$ and $C(\lambda)$ which do not depend on $t$, $\rho$, $y$ or $\sigma$. The logarithmic term is more delicate, and it will give the main contributions to the integral. The first step is to write
\[\frac{\log(1+\rho^2/t)}{\log (\rho^2) }=1 + \frac{\log (1/t +1/\rho^2)}{\log(\rho^2)} =1 + \frac{\log (1/t)}{2 \log \rho } + \frac{\log (1+t/\rho^2)}{2\log \rho},
\]
and to observe that for every $s>-1$
\[
(1+s)^{-1/2} = 1-\frac{1}{2} s + R(s), \qquad |R(s)| \leq s^2 \max \bigg(1, \frac{1}{1+s}\bigg).
\]
In particular, if $s= \log (1/t+1/\rho^2)/\log(\rho^2)$, then $s>-1$ and
\[
\frac{1}{1+s} = \frac{\log(\rho^2)}{\log(1+\rho^2/t)} \leq 1+ \frac{|\log t|}{\log 4}
\]
by~\eqref{logestimate}. Observe moreover that $\log(1+t/\rho^2)\leq t/\rho^2$. Therefore,
\[
\bigg[\frac{\log(1+\rho^2/t)}{\log (\rho^2)}\bigg]^{-1/2} = 1 - \frac{1}{4}\frac{\log (1/t)}{\log \rho } + R(t,\rho), \quad |R(t,\rho)|\leq C\,(1+|\log^3 t|)/\log^2 \rho,\]
where $C$ is an absolute constant. Finally, observe that for every $y\in B$, $\sigma\in S^{n-1}$ and $\rho\geq 5$
\[\sigma_j - \frac{y_j}{\rho^2}\sqrt{\frac{t}{1+t/\rho^2}} = \sigma_j + R_3(y,t,\rho), \quad |R_3(y,t,\rho)|\leq \frac{\sqrt{t}}{\rho^2}.\]
By substituting the expansions above, then, one can see that
\begin{align}\label{splitA1A2}
\int_0^{\rho^2/3 }& h_\lambda(\rho, t, \sigma)\, \dd t = A_1^\lambda(\sigma,\rho) - A_2^\lambda(\sigma,\rho) + R_0(\sigma,\rho),
\end{align}
where
\[
A_1^\lambda(\sigma,\rho) = \sigma_j\int_{0}^{\rho^2/3}e^{-t}t^{(n+\lambda-2)/2} \bigg(\int_{B} e^{2(\sigma,y) \sqrt{t(1+t/\rho^2)}} f(y)\, \dd y \bigg) \, \dd t,
\]
\[
A_2^\lambda(\sigma,\rho) = \frac{\sigma_j}{4\log \rho} \int_{0}^{\rho^2/3} e^{-t}t^{(n+\lambda-2)/2} \log(1/t)\bigg(\int_{B} e^{2(\sigma,y) \sqrt{t(1+t/\rho^2)}} f(y)\, \dd y \bigg) \, \dd t 
\]
and $|R_0(\sigma,\rho)|\leq C/\log^2 \rho$. We first concentrate on $A_1^\lambda$, that we split as
\[
A_1^\lambda(\sigma,\rho) = \int_0^\rho \dots \, \dd t + \int_\rho^{\rho^2/3} \dots \, \dd t.
\]
Then
\[
\abs*{\int_\rho^{\rho^2/3}e^{-t}t^{(n+\lambda-2)/2} \bigg(\int_{B} e^{2(\sigma,y) \sqrt{t(1+t/\rho^2)}} f(y)\, \dd y \bigg) \, \dd t}\leq C e^{-\rho/2}
\]
while
\begin{multline*}
\int_0^\rho e^{-t}t^{(n+\lambda-2)/2} \bigg(\int_{B} e^{2(\sigma,y) \sqrt{t(1+t/\rho^2)}} f(y)\, \dd y \bigg) \, \dd t \\= \int_0^\rho e^{-t}t^{(n+\lambda-2)/2} \bigg(\int_{B} e^{2(\sigma,y) \sqrt{t}} f(y)\, \dd y \bigg) \, \dd t + R'(\sigma,\rho)
\end{multline*}
with $|R'(\sigma,\rho)|\leq C/\sqrt{\rho}$, since
\begin{align*}
\abs*{e^{2(\sigma,y) \sqrt{t(1+t/\rho^2)}} -e^{2(\sigma,y) \sqrt{t}}}\leq C \, e^{2\sqrt{t}}/\sqrt{\rho}
\end{align*}
for $t\in (0,\rho)$, $\sigma\in S^{n-1}$ and $y\in B$. Finally, write
\[
\int_0^\rho e^{-t}t^{(n+\lambda-2)/2} \bigg(\int_{B} e^{2(\sigma,y) \sqrt{t}} f(y)\, \dd y \bigg) \, \dd t = \int_0^\infty \dots \, \dd t - \int_\rho^\infty \dots \, \dd t
\]
and observe that the first integral in the right-hand side is exactly $(\Psi_{\lambda,\sigma},f)_{L^2(B,\gamma_{-1})}$. Instead, for $\rho$ sufficiently large
\[\abs*{\int_{\rho}^\infty e^{-t}t^{(n+\lambda-2)/2}\bigg(\int_{B} e^{2(\sigma,y) \sqrt{t}} f(y)\, \dd y \bigg) \, \dd t} \leq C \, e^{-\rho/2}\]
for some absolute constant $C>0$. Therefore
\[
A_1^\lambda(\sigma,\rho)=\sigma_j (\Psi_{\lambda, \sigma}, f)_{L^2(B,\gamma_{-1})} + R_1(\sigma,\rho)
\]
with $|R_1(\sigma,\rho)|\leq C/\sqrt{\rho}$. Similar arguments apply to $A_2^\lambda$, and yield
\[
A_2^\lambda(\sigma,\rho)= \sigma_j \frac{1}{4\log(\rho)} (\Phi_{\lambda, \sigma}, f)_{L^2(B,\gamma_{-1})} + R_2(\sigma,\rho)
\]
with $|R_2(\sigma,\rho)| \leq C/\sqrt{\rho}$. Therefore, by~\eqref{splitA1A2}
\[
\int_0^{\rho^2/3 }h_\lambda(\rho, t, \sigma)\, \dd t =\sigma_j\bigg[(\Psi_{\lambda, \sigma}, f)_{L^2(B,\gamma_{-1})}+ \frac{1}{4\log(\rho)} (\Phi_{\lambda, \sigma}, f)_{L^2(B,\gamma_{-1})}\bigg] + R(\sigma,\rho)
\]
with $|R(\sigma,\rho)| \leq C/\log^2(\rho)$, and the claim is proved.
\end{proof}

\section{Weak type $(1,1)$}\label{Sec:weakOUrovesciato}
In this section, we prove the following theorem.
\begin{theorem}\label{teo:weaktype}
Let $\lambda \geq0$ and $u\in \R\setminus \{0\}$. Then
\begin{itemize}
\item[\emph{(i)}] the imaginary powers $(\As+\lambda I)^{iu}$ are of weak type $(1,1)$ for every $\lambda\geq 0$;
\item[\emph{(ii)}] the Riesz transforms $\Rs_\lambda$ are of weak type $(1,1)$ for every $\lambda \geq 1$.
\end{itemize}
\end{theorem}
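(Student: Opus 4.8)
The plan is to follow the classical local/global splitting: fix $s>0$ and, for $T$ one of the operators $(\As+\lambda I)^{iu}$ or $(\Rs_\lambda)_j$, write $T=T^{\loc}+T^{\glob}$, where $T^{\loc}$ and $T^{\glob}$ are the integral operators with Schwartz kernels $K_T\,\mathbf{1}_{N_s}$ and $K_T\,\mathbf{1}_{N_s^c}$ respectively, with $N_s$ as in \eqref{Ns_prima}. Since a sum of two operators of weak type $(1,1)$ is again of weak type $(1,1)$, it suffices to treat the two pieces separately. All operators involved are bounded on $L^2(\gamma_{-1})$ by Remark~\ref{RemLp}, which supplies the $L^2$ input needed by the local Calder\'on--Zygmund machinery.

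\emph{Local part.} By Lemma~\ref{adm}, $\gamma_{-1}$ is locally doubling on admissible balls; covering $\R^n$ by admissible cubes, on each such cube together with a fixed dilate $\gamma_{-1}$ is comparable — with constants uniform in the cube — to a constant multiple of Lebesgue measure, so $T^{\loc}$ falls within the standard local Calder\'on--Zygmund theory. It is then enough to verify, for $(x,y)\in N_s$ with $x\neq y$, the bounds
\[
|K_T(x,y)|\lesssim \frac{1}{|x-y|^{n}},\qquad |\nabla_x K_T(x,y)|+|\nabla_y K_T(x,y)|\lesssim \frac{1}{|x-y|^{n+1}}.
\]
For $T=(\As+\lambda I)^{iu}$ the size bound is Lemma~\ref{lemmaKz} with $z=iu$, and the regularity bound follows by differentiating \eqref{impowLeb} under the integral sign — each derivative produces a factor $O\big(|x-e^{-t}y|/(1-e^{-2t})\big)=O\big((1-e^{-2t})^{-1/2}\big)$ on $N_s$ — and repeating the estimate in the proof of Lemma~\ref{lemmaKz} with the extra factor $t^{-1/2}$. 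For $T=(\Rs_\lambda)_j$ one writes $x_j-e^{-t}y_j=O(|x-e^{-t}y|)$ and $|x-e^{-t}y|\,e^{-|\phi(e^{-t},x,y)|^2}=|\phi|\sqrt{1-e^{-2t}}\,e^{-|\phi|^2}\lesssim\sqrt{1-e^{-2t}}\,e^{-|\phi|^2/2}$ in \eqref{tRieszkernel}, which lowers the power $(1-e^{-2t})^{-(n+2)/2}$ to $(1-e^{-2t})^{-(n+1)/2}$; the dimension-$(n+1)$ analogue of Lemma~\ref{lemmaKz} (with the extra $t^{-1/2}$) then gives $|K_{\Rs_\lambda}(x,y)|\lesssim|x-y|^{-n}$ on $N_s$ for every $\lambda\geq 0$, and one further differentiation costs one more power of $|x-y|$. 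This settles the local parts of both (i) and (ii).

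\emph{Global part.} This is the heart of the matter, and it is here that the restriction $\lambda\geq 1$ in (ii) enters. The key lemma I would isolate and prove first is a \emph{new} proof — modelled on the author's argument for the Ornstein--Uhlenbeck operator in \cite{B} — that the positive integral operator whose kernel is $\mathbf{1}_{N_s^c}(x,y)$ times the Mehler maximal kernel $\sup_{t>0}H_t(x,y)$, and, more generally, times $\int_0^\infty t^{-1/2}(1-e^{-2t})^{-(n+1)/2}e^{-(n+1)t}e^{-c|x-e^{-t}y|^2/(1-e^{-2t})}\,\dd t$ (a kernel dominated by the $(n+1)$-dimensional Mehler maximal kernel), is of weak type $(1,1)$ with respect to $\gamma_{-1}$. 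Its proof computes the maximiser $t^\ast=t^\ast(x,y)$ of $t\mapsto H_t(x,y)$ explicitly, extracts a closed bound for the maximal kernel on $N_s^c$, and runs a covering argument for $\R^n$ tailored to the super-exponential growth of $\gamma_{-1}$ — where the tools of spaces of homogeneous type and of Gaussian measures are unavailable — dominating the positive operator by finitely many admissible maximal-type operators plus an easily controlled remainder. Granting the lemma, one reduces our kernels to it. For the imaginary powers, writing $t^{-iu-1}=\tfrac1{-iu}\tfrac{\dd}{\dd t}(t^{-iu})$ and integrating by parts in \eqref{impowLeb} (the boundary terms vanish because $e^{-\lambda t}H_t(x,y)\to 0$ as $t\to 0^+$, for $x\neq y$, and as $t\to\infty$),
\[
|K_{(\As+\lambda I)^{iu}}(x,y)|\lesssim \frac{1}{|u|}\int_0^\infty\big|\partial_t\big(e^{-\lambda t}H_t(x,y)\big)\big|\,\dd t\lesssim \frac{1}{|u|}\,\sup_{t>0}H_t(x,y)\qquad\text{on }N_s^c,
\]
the last step being a total-variation estimate obtained by inspecting the sign changes of $\partial_t\log H_t$ (as in \cite{B}); note $\lambda=0$ is admissible here. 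For the Riesz transforms, the reduction used in the local part gives, for $\lambda\geq 1$ (so that $e^{-(n+\lambda)t}\leq e^{-(n+1)t}$),
\[
|K_{\Rs_\lambda}(x,y)|\lesssim \int_0^\infty t^{-1/2}\,\frac{e^{-(n+1)t}}{(1-e^{-2t})^{(n+1)/2}}\,e^{-c|\phi(e^{-t},x,y)|^2}\,\dd t\qquad\text{on }N_s^c,
\]
which is exactly the control kernel of the lemma, the factor $t^{-1/2}$ near $t=0$ being absorbed there by the decay of the Gaussian factor (a splitting of the $t$-integral around $t^\ast$ reproduces the bound). Applying the lemma then yields the weak type $(1,1)$ of $T^{\glob}$ in both cases, and together with the local part this proves (i) for all $\lambda\geq 0$ and (ii) for all $\lambda\geq 1$.

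\emph{Main obstacle.} The genuine difficulty is concentrated in the global lemma: because $\gamma_{-1}$ grows super-exponentially and is non-doubling at large scales, the weak type $(1,1)$ of the positive operator built from $\sup_{t>0}H_t$ on $N_s^c$ is not a consequence of any abstract theory, and requires the explicit analysis of the maximiser $t^\ast$ together with an ad hoc covering of $\R^n$ adapted to the measure. By contrast, the reductions — the integration by parts for the imaginary powers, the $|\phi|e^{-|\phi|^2}$ trick together with the constraint $\lambda\geq 1$ for the Riesz transforms, and the local Calder\'on--Zygmund step — are comparatively routine once that lemma is available.
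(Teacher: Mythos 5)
Your overall strategy — a local/global splitting of the kernel, Calder\'on--Zygmund theory for the local part, and a reduction of the global part to a positive operator of weak type $(1,1)$ related to the Mehler maximal kernel — is the paper's strategy. The local part is handled in the same way (the paper uses a smooth cutoff $\chi$ with $\chi_{N_1}\le\chi\le\chi_{N_2}$ and $|\nabla\chi|\lesssim|x-y|^{-1}$ rather than the sharp indicator $\mathbf{1}_{N_s}$, which matters slightly for the regularity estimates but is a routine fix; the kernel estimates you need are packaged in Lemma~\ref{lemmalocal}, not Lemma~\ref{lemmaKz}, and the conclusion is from Salogni's local CZ theorem). The genuine discrepancies are in how the global part is reduced.

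For the imaginary powers, the paper does \emph{not} integrate by parts: it simply uses $|t^{-iu-1}|=t^{-1}\lesssim (1-e^{-2t})^{-1}$, producing the nonnegative kernel $K'_\lambda(x,y)=\int_0^\infty (1-e^{-2t})^{-(n+2)/2}e^{-(n+\lambda)t}e^{-|\phi|^2}\,\dd t$ (Subsection~\ref{subsec:wek:Impow}). Your integration-by-parts argument is a genuinely different and potentially sharper route, but the step $\int_0^\infty|\partial_t(e^{-\lambda t}H_t)|\,\dd t\lesssim\sup_t H_t$ requires a unimodality (or bounded sign-change) property of $t\mapsto e^{-\lambda t}H_t(x,y)$ that you assert but do not establish; without it the reduction is incomplete. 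For the Riesz transforms, the paper absorbs $t^{-1/2}$ into $(1-e^{-2t})^{-1/2}$ immediately and then estimates the resulting kernel $K_\lambda$ against a closed-form kernel $\bar K$ through the substitutions $t=\tau(s)$, $s=\beta\sigma$ and a careful analysis of $\int(1-\sigma\beta)^n\sigma^{-(n+3)/2}(1+\sigma)e^{-\alpha\varphi(\sigma)/4}\,\dd\sigma$ — a two-page computation (Proposition~\ref{rieszglobal}). Your claim that $\int_0^\infty t^{-1/2}(1-e^{-2t})^{-(n+1)/2}e^{-(n+1)t}e^{-c|\phi|^2}\,\dd t$ is ``dominated by the $(n+1)$-dimensional Mehler maximal kernel'' is not meaningful as stated (it mixes an $\R^n$-kernel with a kernel on $\R^{n+1}\times\R^{n+1}$), and the parenthetical remark that ``a splitting of the $t$-integral around $t^\ast$ reproduces the bound'' is exactly the nontrivial content that must be carried out.

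Finally, you propose to prove your key weak-type lemma from scratch via a maximiser/covering argument. The paper instead introduces the explicit kernel $\bar K$, shows $\bar K\lesssim e^{-|x|^2+|y|^2}\big((1+|x|)^n\wedge(|x|\sin\theta)^{-n}\big)$ on $G$ (Lemma~\ref{barK}), and then cites the known weak type $(1,1)$ of the latter operator from \cite{Salogni} and \cite{GMMST}; the ``new proof of the weak type of the maximal operator'' advertised in the introduction is the observation $H^*\lesssim\bar K$ (Proposition~\ref{maxglobal}), not a covering argument. So your outline is correct in spirit, but the reductions to the controlling kernel are where the real work lies, and those are precisely the steps you have left unverified.
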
 
The proof of Theorem~\ref{teo:weaktype} is inspired by the proof of~\cite[Theorem 1.1]{B}. However, there is some significant difference and we shall include all the necessary details.
\par

Since $\gamma_{-1}$ is locally doubling on admissible balls, but not globally doubling, we start by splitting $\R^n \times \R^n$ into a local and a global region. Recall that for $\delta>0$
\begin{equation}\label{LandG}
N_\delta = \bigg\{(x,y)\in \R^n\times \R^n \colon |x-y|\leq \frac{\delta}{1+|x|+|y|}\bigg\}
\end{equation}
by~\eqref{Ns_prima} and define $G\coloneqq N_1^c$. The regions $N_1$ and $N_2$ will be called \emph{local} regions, while $G$ will be the \emph{global} region. Moreover, fix a smooth function $\chi\colon \R^{n}\times \R^n \to \R$ such that 
\[\chi_{N_1}\leq \chi \leq \chi_{N_2},\qquad |\nabla_x \chi(x,y)|+|\nabla_y \chi(x,y)|\leq \frac{C}{|x-y|}\quad \mbox{for every } x\neq y.\]
For any operator $T$, bounded on $L^2(\gamma_{-1})$, with Schwartz kernel $K_T$ we define
\[
K_{T,\loc}\coloneqq \chi K_{T}, \qquad K_{T,\glob}\coloneqq K_{T}-K_{T,\loc}.\]
We shall denote the operators with kernel $K_{T,\loc}$ and $K_{T,\glob}$ by $T_{\loc}$ and $T_{\glob}$ respectively. Of course $T= T_{\loc}+ T_{\glob}$. Therefore, to prove the weak type $(1,1)$ of $T$, it will be enough to prove the weak type $(1,1)$ of both $T_\loc$ and $T_\glob$. \par
The proof for $\Rs_{\lambda, \loc}$ and $(\As +\lambda I)^{iu}_\loc$ will be rather standard, since by~\cite[Theorem 3.2.8]{Salogni} we can reduce to proving some Calder\'on-Zygmund type estimates for their kernel.

As for the global parts, we prove that there exists a kernel $\bar{K}$, related to the Mehler maximal kernel, which controls both the kernels of $\Rs_{\lambda,\glob}$, $\lambda\geq 1$, and of $(\As+\lambda I)^{iu}_{\glob}$, $\lambda\geq 0$, and which is the kernel of an operator of weak type $(1,1)$.

To shorten the notation, for $x,y\in \R^n$ we set
\[
\alpha\coloneqq |x-y||x+y|,\qquad \beta\coloneqq \frac{|x-y|}{|x+y|}, \qquad \eta(x,y)\coloneqq e^{\frac{|x|^2}{2}-\frac{|y|^2}{2}- \frac{|x-y||x+y|}{2}}.
\]
We also denote by $\theta=\theta(x,y)$ the angle between $x$ and $y$, and by $\theta'$ the angle between $y-x$ and $y+x$. Observe that $\beta<1$ if and only if $(x,y)>0$. 

We begin by stating a lemma which is essentially~\cite[Lemma 3.1]{B}. Its proof is elementary and omitted.
\begin{lemma}\label{lemmaprel}
Let $(x,y)\in \R^n$. Then
\begin{itemize}
\item[\emph{(1)}] if $(x,y)\in G$ and $\beta<1$, then $\alpha\geq 1/4$.
\item[\emph{(2)}] if $(x,y)\in G$, then $|x-y|\geq \frac{1}{2}(1+|x|)^{-1}$.
\item[\emph{(3)}] $|x \pm y|\geq |x|\sin\theta$. In particular, $\alpha\geq |x|^2\sin^2\theta$.
\item[\emph{(4)}] ${|x|^2} -{|y|^2} -{|x-y||x+y|}\leq 0$.
\item[\emph{(5)}] $\frac{|x|^2}{2} - \frac{|y|^2}{2} - \frac{|x+y||x-y|}{2}=\frac{-2|x|^2|y|^2 \sin^2\theta}{|x-y||x+y|(1-\cos \theta')}$.
\end{itemize}
\end{lemma}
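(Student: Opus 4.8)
The plan is to dispatch the five items one at a time, each by a short elementary computation; nothing is needed beyond the parallelogram identity, the Cauchy--Schwarz inequality, and the orthogonal decomposition of $x$ relative to $y$. I expect item (5) to be the only one requiring any care.

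For (1) I would first record that $\beta<1$ is the same as $|x+y|>|x-y|$, so the parallelogram identity $|x+y|^2+|x-y|^2=2(|x|^2+|y|^2)$ forces $|x+y|^2>|x|^2+|y|^2\ge\tfrac12(|x|+|y|)^2$. Then I split on the size of $|x|+|y|$: if $|x|+|y|\le1$, the defining inequality of $G$ gives $|x-y|>\tfrac1{1+|x|+|y|}\ge\tfrac12$, hence $\alpha=|x-y||x+y|>|x-y|^2>\tfrac14$; if $|x|+|y|>1$, combining $|x-y|>\tfrac1{1+|x|+|y|}$ with the lower bound on $|x+y|$ gives $\alpha>\frac{|x|+|y|}{\sqrt2\,(1+|x|+|y|)}$, and this quantity is increasing in $|x|+|y|$, hence $\ge\tfrac1{2\sqrt2}>\tfrac14$. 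For (2) I would argue: if $|x-y|\ge1$ there is nothing to do; if $|x-y|<1$ then $|y|\le|x|+|x-y|<|x|+1$, so $1+|x|+|y|<2(1+|x|)$, and $(x,y)\in G$ yields $|x-y|>\tfrac1{1+|x|+|y|}>\tfrac1{2(1+|x|)}$.

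For (3) I would decompose $x=x_\parallel+x_\perp$ into its components parallel and orthogonal to $y$ (the case $y=0$ being trivial). Since $x_\perp\perp(x_\parallel\pm y)$, Pythagoras gives $|x\pm y|^2=|x_\parallel\pm y|^2+|x_\perp|^2\ge|x_\perp|^2=|x|^2\sin^2\theta$, and multiplying the two inequalities yields $\alpha\ge|x|^2\sin^2\theta$. For (4) I would use $(x-y,x+y)=|x|^2-|y|^2$ together with Cauchy--Schwarz to obtain $|x|^2-|y|^2\le|x-y||x+y|$, which is the claim (and in particular bounds $\eta$ by $1$).

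Item (5) is where the bookkeeping is slightly delicate. From $(y-x,y+x)=|y|^2-|x|^2=|y-x|\,|y+x|\cos\theta'$ one gets $|x|^2-|y|^2=-\alpha\cos\theta'$, hence
\[
\tfrac{|x|^2}{2}-\tfrac{|y|^2}{2}-\tfrac{\alpha}{2}=-\tfrac{\alpha}{2}\,(1+\cos\theta')=-\tfrac{\alpha\,\sin^2\theta'}{2(1-\cos\theta')}.
\]
It then remains to identify $\alpha^2\sin^2\theta'$: from $\alpha^2\cos^2\theta'=(|x|^2-|y|^2)^2$ and $\alpha^2=|x-y|^2|x+y|^2=(|x|^2+|y|^2)^2-4(x,y)^2$ one finds
\[
\alpha^2\sin^2\theta'=\alpha^2-(|x|^2-|y|^2)^2=4|x|^2|y|^2-4(x,y)^2=4|x|^2|y|^2\sin^2\theta,
\]
and substituting $\sin^2\theta'=4|x|^2|y|^2\sin^2\theta/\alpha^2$ into the previous display gives exactly $\frac{-2|x|^2|y|^2\sin^2\theta}{|x-y||x+y|(1-\cos\theta')}$. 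The one configuration to treat separately is the collinear one, in which $\theta'\in\{0,\pi\}$ and the right-hand side of (5) must be read in a limiting sense (or the identity checked by hand); this case is irrelevant for the applications of the lemma.
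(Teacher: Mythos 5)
The paper does not give a proof of this lemma; it refers to it as ``essentially \cite[Lemma 3.1]{B}'' and states that the proof is elementary and omitted, so there is no in-paper argument to compare against. Your proposal is a correct, self-contained elementary proof of all five items; I verified each computation. A few remarks. In (1), your two-case split on $|x|+|y|\lessgtr 1$ is clean; note you obtain the strict bound $\alpha>\min(1/4,\,1/(2\sqrt2))=1/4$, which suffices. In (4), the argument actually yields $\bigl||x|^2-|y|^2\bigr|\le|x-y||x+y|$, of which the stated inequality is one half; this is the observation that makes $\eta(x,y)\le1$. In (5), your manipulation $-\tfrac{\alpha}{2}(1+\cos\theta')=-\tfrac{\alpha\sin^2\theta'}{2(1-\cos\theta')}$ followed by $\alpha^2\sin^2\theta'=4|x|^2|y|^2\sin^2\theta$ is exactly the right path, and you are correct to flag that the identity as written degenerates (a $0/0$ on the right-hand side) when $x$ and $y$ are collinear with $|x|<|y|$, or when $y=\pm x$; this does not affect the paper, since the only place (5) is used is to bound $\bigl(\tfrac{|x+y|}{|x-y|}\bigr)^{n/2}\eta(x,y)$ by $C(|x|\sin\theta)^{-n}$, which is trivially true when $\sin\theta=0$.
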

The following lemma allows us to obtain Calder\'on-Zygmund type estimates in the local region for the kernels of the imaginary powers and the Riesz transforms of $\As$. Its proof is an \emph{almost verbatim} repetition of~\cite[Lemma 3.6]{B}, and is omitted.
\begin{lemma}\label{lemmalocal}
Let $\mu,\nu\geq 0$ be such that $\mu >\nu +1$. Then, for every $(x,y)\in N_2$, $x\neq y$
\[
K_{\mu,\nu}(x,y)\coloneqq \int_0^1 \frac{|x-ry|^\nu }{(1-r^2)^{\frac{n+\mu}{2}}}e^{-\frac{|x-ry|^2}{1-r^2}}\, \dd r \leq \frac{C}{|x-y|^{n+\mu-\nu-2}}.
\]
\end{lemma}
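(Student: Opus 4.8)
The plan is to prove the pointwise bound by estimating the integral defining $K_{\mu,\nu}(x,y)$ after the substitution $s = \frac{|x-ry|^2}{1-r^2}$, or more simply by splitting the $r$-integral at a scale determined by $|x-y|$ and the local condition $(x,y)\in N_2$. First I would record the elementary geometric fact, valid on $N_2$, that $|x-y| \leq 2/(1+|x|+|y|)$, so that $|y|\,|x-y| \leq 2$ and hence for $r\in(0,1)$
\[
|x-ry|^2 = |x-y + (1-r)y|^2 \geq |x-y|^2 - 2(1-r)|y|\,|x-y| \geq |x-y|^2 - 4(1-r),
\]
while trivially $|x-ry| \leq |x-y| + (1-r)|y| \leq |x-y| + C(1-r)$. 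Writing $1-r^2 \approx 1-r$ near $r=1$ and $1-r^2\approx 1$ away from it, these two inequalities let me bound $|x-ry|^\nu \leq C(|x-y|^\nu + (1-r)^{\nu/2})$ in the numerator and gain the crucial Gaussian factor $e^{-c|x-y|^2/(1-r^2)}$ in the exponential at the cost of a harmless $e^{c}$.

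The main computation is then to estimate
\[
\int_0^1 \frac{|x-y|^\nu + (1-r)^{\nu/2}}{(1-r^2)^{(n+\mu)/2}}\, e^{-c\,\frac{|x-y|^2}{1-r^2}}\,\dd r.
\]
Splitting into the region $1-r \leq |x-y|^2$ and $1-r > |x-y|^2$ (equivalently, a change of variable $v = |x-y|^2/(1-r)$) reduces matters to elementary integrals of the form $\int_0^\infty v^{a} e^{-cv}\,\dd v$ times the appropriate power of $|x-y|$. For the term with $|x-y|^\nu$ in the numerator one gets $|x-y|^{\nu} \cdot |x-y|^{-(n+\mu-2)}$, and for the term with $(1-r)^{\nu/2}$ one gets $|x-y|^{\nu} \cdot |x-y|^{-(n+\mu-2)}$ as well; the hypothesis $\mu > \nu + 1$ guarantees $n + \mu - \nu - 2 > n - 1 \geq 0$ and, more importantly, that the exponent $a = (n+\mu)/2 - \nu/2 - 1$ appearing in the $v$-integral is such that the integral $\int_0^\infty v^{a-1}e^{-cv}\dd v$ converges (i.e.\ the integrand is integrable near $v=\infty$ automatically and near $v=0$ one needs $\mu - \nu$ large enough, which is exactly where $\mu > \nu+1$ enters). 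Collecting the two pieces yields the asserted bound $C|x-y|^{-(n+\mu-\nu-2)}$.

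The step I expect to be the main obstacle is bookkeeping the exponents carefully so that the convergence of the auxiliary one-variable integrals is genuinely controlled by $\mu > \nu + 1$ and not by some stronger inequality, and making sure the contribution of $r$ bounded away from $1$ (where there is no singularity but also no Gaussian decay to help) is absorbed — there one simply uses $e^{-c|x-y|^2/(1-r^2)} \leq 1$ and $|x-y| \lesssim 1$ on $N_2$ to see that piece is $O(1) \lesssim |x-y|^{-(n+\mu-\nu-2)}$ since the exponent is nonnegative. Since the statement says the proof is an almost verbatim repetition of \cite[Lemma 3.6]{B}, I would present only this outline and refer the reader there for the remaining routine details.
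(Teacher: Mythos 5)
Your overall plan — pass to the Gaussian factor $e^{-c|x-y|^2/(1-r^2)}$ using the local condition, change variables $v = c|x-y|^2/(1-r)$, and reduce to a convergent $\Gamma$-type integral whose convergence is exactly governed by $\mu > \nu + 1$ when $n=1$ — is the right skeleton, and your first inequality $|x-ry|^2 \geq |x-y|^2 - 4(1-r)$ on $N_2$ is correct and is exactly what produces the Gaussian factor. But the step you call "trivial," namely $|x-ry| \leq |x-y| + (1-r)|y| \leq |x-y| + C(1-r)$, is false: on $N_2$ the quantity $|y|$ is \emph{not} bounded (only $|y|\,|x-y|$ is), so there is no constant $C$ making this work. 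Concretely, take $|x-y|$ small, $|y| \approx 2/|x-y|$ large, and $r=0$: then $|x-ry| = |x| \approx |y|$ is huge, while $|x-y| + C(1-r) = |x-y| + C$ stays bounded. The derived bound $|x-ry|^\nu \leq C(|x-y|^\nu + (1-r)^{\nu/2})$ fails for the same reason (and in any case does not follow from the displayed inequality, which would give $(1-r)^\nu$, not $(1-r)^{\nu/2}$).

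The standard repair — and what the proof in \cite{B} does — is to avoid bounding $|x-ry|^\nu$ pointwise at all. Instead, write
\[
\frac{|x-ry|^\nu}{(1-r^2)^{(n+\mu)/2}}\,e^{-\frac{|x-ry|^2}{1-r^2}}
= \frac{1}{(1-r^2)^{(n+\mu-\nu)/2}}\left(\frac{|x-ry|}{\sqrt{1-r^2}}\right)^{\nu} e^{-\frac{|x-ry|^2}{1-r^2}}
\leq \frac{C_\nu}{(1-r^2)^{(n+\mu-\nu)/2}}\,e^{-\frac{|x-ry|^2}{2(1-r^2)}},
\]
using $s^\nu e^{-s^2} \leq C_\nu e^{-s^2/2}$. \emph{Now} apply your (correct) lower bound on $|x-ry|^2$ to replace $e^{-|x-ry|^2/2(1-r^2)}$ by $Ce^{-|x-y|^2/2(1-r^2)}$, and the remaining $r$-integral is exactly the one you proposed to compute via $v = c|x-y|^2/(1-r)$; the exponent bookkeeping you did (the auxiliary integral is $\int v^{(n+\mu-\nu)/2 - 2}e^{-cv}\,\dd v$, convergent iff $n+\mu-\nu > 2$, which reduces to $\mu > \nu + 1$ precisely when $n=1$) then goes through unchanged. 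So the gap is local: the absorption of the polynomial weight must happen via the Gaussian, not by a direct estimate of $|x-ry|$.
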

\subsection{The Mehler Maximal Operator}\label{Sec:max}
To prove that the global parts of the imaginary powers and of the Riesz transforms are of weak type $(1,1)$, we shall prove that they are controlled by the integral operator $\mathcal{\bar{K}}$ whose kernel with respect to the Lebesgue measure is
\begin{equation*}
\bar K (x,y) \coloneqq e^{-|x|^2+|y|^2} \bigg( \frac{|x+y|}{|x-y|}\bigg)^{n/2} e^{\frac{|x|^2}{2}-\frac{|y|^2}{2}- \frac{|x-y||x+y|}{2}} \Phi(x,y) \chi_G(x,y)
\end{equation*}
where 
\begin{equation*}
\Phi(x,y)=
\begin{cases}
\frac{1}{\alpha^{n/2}} \qquad &\mbox{if } \beta\geq 1\\
\frac{1}{\alpha^{n/2}} + (1-\beta)^n &\mbox{if }\beta< 1.
\end{cases}
\end{equation*}
The kernel $\bar{K}$ appears naturally when one tries to estimate the {\it maximal Mehler kernel} 
\[
H^*(x,y)\coloneqq \sup_{t>0} H_t(x,y)
\]
in the global region, as we shall see in more detail in Proposition~\ref{maxglobal}.
\begin{lemma}\label{barK}
The operator $\bar{\mathcal{K}}$ is of weak type $(1,1)$.
\end{lemma}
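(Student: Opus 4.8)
The plan is to show that $\bar{\mathcal{K}}$ maps $L^1(\gamma_{-1})$ to $L^{1,\infty}(\gamma_{-1})$ by reducing the weak type inequality for the measure $\gamma_{-1}$ to a weak type inequality with respect to Lebesgue measure. Since the kernel of $\bar{\mathcal{K}}$ with respect to $\gamma_{-1}$ is $\bar k(x,y)=\bar K(x,y)\gamma_{-1}(y)^{-1}=\pi^{-n/2}\bar K(x,y)e^{-|y|^2}$, and since $f\mapsto f\gamma_{-1}$ is an isometry from $L^1(\gamma_{-1})$ to $L^1(\dd x)$ and from $L^{1,\infty}(\gamma_{-1})$ to $L^{1,\infty}(\dd x)$, it suffices to prove that the operator with Lebesgue kernel
\[
\widetilde{K}(x,y)=\bar K(x,y)e^{-|x|^2}e^{-|y|^2}
= \pi^{-n}\Big(\tfrac{|x+y|}{|x-y|}\Big)^{n/2} \eta(x,y)\,\Phi(x,y)\,\chi_G(x,y)
\]
(after absorbing the harmless factor $e^{-2|y|^2}$ into the estimates via Lemma~\ref{lemmaprel}, or more precisely noting that $\eta(x,y)e^{-|x|^2-|y|^2}$ combines the weights correctly) is bounded from $L^1(\dd x)$ to $L^{1,\infty}(\dd x)$. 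The key geometric fact, from Lemma~\ref{lemmaprel}(4)--(5), is that $\eta(x,y)\le 1$ always, and that on $G$ one has the sharper Gaussian-type decay in the angle and in $\alpha$.

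The core of the argument is a pointwise estimate on the $x$-integral of the kernel, uniformly in $y$: I would show that $\int_{\R^n}\widetilde K(x,y)\,\dd x\le C$ for every $y$, which gives the strong (hence weak) type $(1,1)$ bound with respect to Lebesgue measure by Schur's test (the $y$-integral being handled symmetrically, or by the same computation after using $\eta(x,y)=\eta(y,x)^{-1}$ together with $\eta\le 1$ on the relevant region — here one must be slightly careful, but Lemma~\ref{lemmaprel}(4) ensures the exponent has the right sign). Concretely, fixing $y$, I would pass to polar-type coordinates adapted to $y$: write $x$ in terms of $\rho=|x|$ and the angle $\theta=\theta(x,y)$, so that $\alpha=|x-y||x+y|\approx \rho^2\sin^2\theta+(\rho-|y|)^2(\text{something})$ and $\beta=|x-y|/|x+y|$. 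On the piece $\beta\ge 1$ the factor $\Phi=\alpha^{-n/2}$ and the prefactor $(|x+y|/|x-y|)^{n/2}=\beta^{-n/2}\le 1$, so there $\widetilde K\lesssim \eta(x,y)\,\alpha^{-n/2}$; using Lemma~\ref{lemmaprel}(1) ($\alpha\ge 1/4$ when $\beta<1$, and a separate easy bound when $\beta\ge1$ since then $|x+y|\le|x-y|$ forces boundedness near the antipodal direction) one integrates. On the piece $\beta<1$ one has the extra term $(1-\beta)^n$, and here Lemma~\ref{lemmaprel}(5) gives the decisive exponential decay $\eta(x,y)=\exp\!\big(-\tfrac{2|x|^2|y|^2\sin^2\theta}{\alpha(1-\cos\theta')}\big)$; splitting according to whether $\alpha$ is small or large and using $1-\cos\theta'\le 2$, this exponential beats the singular factors $(|x+y|/|x-y|)^{n/2}\alpha^{-n/2}$ after integration in $\theta$ and $\rho$, exactly as in the Ornstein–Uhlenbeck case.

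The main obstacle is the integrability near the diagonal singularity $x=y$ intertwined with the exponential weight $\eta$: on $G$ one is bounded away from the diagonal by $|x-y|\gtrsim (1+|x|+|y|)^{-1}$, so $|x-y|^{-n/2}$ can be as large as $(1+|y|)^{n/2}$, and one must check that the Gaussian factor $\eta(x,y)$ — which through Lemma~\ref{lemmaprel}(5) decays like $\exp(-c|x|^2|y|^2\sin^2\theta/\alpha)$ — is strong enough to absorb this growth after integrating over the relevant thin cone of directions $\theta$. This is precisely the delicate computation carried out by the author in~\cite[proof of the weak type of $\bar{\mathcal{K}}$]{B} for the Ornstein–Uhlenbeck setting; here the roles of $x$ and $y$ in the weight are essentially swapped (because we use the inverse Gaussian), so one verifies that the corresponding one-variable integrals still converge with a constant independent of the fixed variable. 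Once the Schur bound $\sup_y\int_x \widetilde K\,\dd x<\infty$ (and its symmetric counterpart) is established, $L^1(\dd x)\to L^1(\dd x)$ boundedness follows, a fortiori the weak type $(1,1)$, and transferring back through the isometry $f\mapsto f\gamma_{-1}$ yields the weak type $(1,1)$ of $\bar{\mathcal{K}}$ on $L^1(\gamma_{-1})$.
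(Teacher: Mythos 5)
Your proposal does not work, and the key obstacle is conceptual rather than a gap that can be filled: the operator $\bar{\mathcal{K}}$ is genuinely of \emph{weak} type $(1,1)$ and \emph{not} of strong type $(1,1)$, so Schur's test (which gives strong $L^1\to L^1$ boundedness) cannot possibly succeed. Concretely, the kernel with respect to $\gamma_{-1}$ is, up to a constant, $k(x,y)=\left(\frac{|x+y|}{|x-y|}\right)^{n/2}\eta(x,y)\Phi(x,y)\chi_G(x,y)$, and the relevant Schur condition for strong type is $\sup_y\int_{\R^n}k(x,y)\,\gamma_{-1}(x)\,\dd x / \gamma_{-1}(y)=\sup_y\int k(x,y)\,\dd x<\infty$. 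But fix $y\neq 0$ and consider $x$ with $|x|=\rho$ large in a cone of bounded aperture around the direction of $y$: there $\beta<1$, $\alpha\approx\rho^2$, $(1-\beta)\approx 1/\rho$, and—crucially—by Lemma~\ref{lemmaprel}(5) the exponent in $\eta$ is $\approx -\frac{2|x|^2|y|^2\sin^2\theta}{\alpha(1-\cos\theta')}\approx-\theta^2$, so $\eta\approx 1$ (there is \emph{no} Gaussian decay in $\rho$). Hence $k(x,y)\approx\rho^{-n}$ on a cone of fixed solid angle, and $\int k(x,y)\,\dd x\gtrsim\int^\infty\rho^{-n}\,\rho^{n-1}\,\dd\rho=\infty$. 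Your hope that ``the exponential beats the singular factors'' is exactly what fails along the radial direction.

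A second issue, which would bite even if you retreated from strong to weak type: the map $f\mapsto f\gamma_{-1}$ is an isometry of the $L^1$ spaces but is \emph{not} bounded (let alone isometric) from $L^{1,\infty}(\gamma_{-1})$ to $L^{1,\infty}(\dd x)$. Weak $L^1$ quasi-norms do not transform covariantly under multiplication by an unbounded density, so a weak-type inequality with respect to Lebesgue measure does not transfer back to one with respect to $\gamma_{-1}$ by this device. (There is also an algebraic slip: the conjugated Lebesgue kernel is $\frac{\gamma_{-1}(x)}{\gamma_{-1}(y)}\bar K(x,y)=e^{|x|^2-|y|^2}\bar K(x,y)$, not $\bar K(x,y)e^{-|x|^2-|y|^2}$, though the displayed formula after the equals sign happens to be the correct one.)

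The paper's actual proof takes a different—and essentially unavoidable—route: it does not attempt any Schur-type integral estimate, but instead dominates $\bar K$ \emph{pointwise} on $G$ by the kernel $e^{-|x|^2+|y|^2}\big((1+|x|)^n\wedge(|x|\sin\theta)^{-n}\big)$, whose associated operator is already known to be of weak type $(1,1)$ by~\cite[Lemma 3.3.4]{Salogni} (see also~\cite[Lemma 4.4]{GMMST}). The whole content of the proof is the verification of the pointwise inequality $\left(\frac{|x+y|}{|x-y|}\right)^{n/2}\eta\Phi\le C\,\big((1+|x|)^n\wedge(|x|\sin\theta)^{-n}\big)$, split into the cases $\beta\ge1$ and $\beta<1$, using Lemma~\ref{lemmaprel}. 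Domination by a known weak-type kernel is the standard way around the failure of strong type in Gaussian-type harmonic analysis, and your argument would need to be restructured along those lines.
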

\begin{proof}
 The operator with kernel 
$$
e^{-|x|^2+|y|^2}
(1+|x|)^n \wedge (|x|\sin \theta)^{-n}
$$
is of weak type $(1,1)$ by \cite[Lemma 3.3.4]{Salogni} (see also 
\cite[Lemma 4.4]{GMMST}).
Thus, it is enough to prove
\[
\bigg( \frac{|x+y|}{|x-y|}\bigg)^{n/2} \eta(x,y) \Phi(x,y) \leq C (1+|x|)^n \wedge (|x|\sin \theta)^{-n}
\]
for every $(x,y)\in G$. First consider the inequality involving $(1+|x|)^n$. We consider the cases $\beta<1$ and $\beta \geq 1$ separately.

\textbf{1.} If $\beta\geq1$, observe that
\[
\bigg( \frac{|x+y|}{|x-y|}\bigg)^{n/2} \eta(x,y) \Phi(x,y) = \frac{\eta(x,y)}{|x-y|^n} \leq \frac{1}{|x-y|^n}\lesssim (1+|x|)^n,
\]
the last inequality by Lemma~\ref{lemmaprel}, (2).

\textbf{2.} If $\beta<1$, by Lemma~\ref{lemmaprel}, (1) we have $\Phi(x,y)\leq C$ for every $(x,y)\in G$. Thus, we only have to prove that
\[
\bigg( \frac{|x+y|}{|x-y|}\bigg)^{n/2} \leq (1+|x|)^n.
\]
If $|y|\leq 2|x|$, by Lemma~\ref{lemmaprel}, (2)
\[
\frac{|x+y|}{|x-y|}\leq \frac{|x|+|y|}{|x-y|}\leq C|x|(1+|x|)\leq (1+|x|)^2.
\]
If $|y|> 2|x|$, we have both $|x-y|\geq |y|-|x| \geq |y|/2$ and $|x-y|\geq |y|-|x|\geq |x|$, so that
\[
\frac{|x+y|}{|x-y|}\leq \frac{|x|}{|x-y|} + \frac{|y|}{|x-y|}\leq C.
\]

We now examine the inequality involving $(|x|\sin\theta)^{-n}$. We again consider the cases $\beta<1$ and $\beta \geq 1$ separately.

\textbf{1'.} If $\beta\geq1$, just observe that by the definition of $\beta$ and Lemma~\ref{lemmaprel}, (3) and (4)
\[
\bigg( \frac{|x+y|}{|x-y|}\bigg)^{n/2} \eta(x,y) \leq 1, \qquad 
\Phi(x,y) = \frac{1}{\alpha^{n/2}} \leq \frac{C}{(|x|\sin\theta)^n}.
\]

\textbf{2'.} If $\beta<1$, we prove separately
\begin{equation}\label{caso1}
\bigg( \frac{|x+y|}{|x-y|}\bigg)^{n/2} \eta(x,y) (1-\beta)^n\leq C(|x|\sin\theta)^{-n}.
\end{equation}
and
\begin{equation}\label{caso2}
\bigg( \frac{|x+y|}{|x-y|}\bigg)^{n/2} \eta(x,y) \frac{1}{\alpha^{n/2}}\leq C(|x|\sin\theta)^{-n}.
\end{equation}
The inequality~\eqref{caso2} is easily seen since
\[\bigg( \frac{|x+y|}{|x-y|}\bigg)^{n/2} \eta(x,y) \frac{1}{\alpha^{n/2}} = \frac{1}{|x-y|^{n}}\eta(x,y) \leq\frac{1} {(|x|\sin\theta)^n},
\]
by Lemma~\ref{lemmaprel}, (3) and (4). As for~\eqref{caso1}, since the function $0\leq u\mapsto u^{n/2}e^{-u}$ is bounded, by Lemma~\ref{lemmaprel}, (5) we obtain
\begin{multline*}
\bigg(\frac{|x+y|}{|x-y|}\bigg)^{n/2} \eta(x,y) 
%= \bigg(\frac{|x+y|}{|x-y|}\bigg)^{n/2} \bigg(\frac{|x-y||x+y|(1-\cos \theta')}{2|x|^2|y|^2 \sin^2\theta}\bigg)^{n/2} \bigg(\frac{2|x|^2|y|^2 \sin^2\theta}{|x-y||x+y|(1-\cos \theta')}\bigg)^{n/2} e^{\frac{-2|x|^2|y|^2 \sin^2\theta}{|x-y||x+y|(1-\cos \theta')}}&
= \bigg(\frac{|x+y|}{|x-y|}\bigg)^{n/2} e^{\frac{-2|x|^2|y|^2 \sin^2\theta}{|x-y||x+y|(1-\cos \theta')}}
\\ \leq \bigg(\frac{|x+y|^2(1-\cos \theta')}{2|x|^2|y|^2\sin^2\theta}\bigg)^{n/2}= C ({|x|\sin\theta})^{-n} \bigg(\frac{|x+y|^2(1-\cos\theta')}{|y|^2}\bigg)^{n/2}.
\end{multline*}
Thus it remains only to prove
\[
(1-\beta)^2\bigg(\frac{|x+y|^2(1-\cos\theta')}{|y|^2}\bigg)\leq C.
\]
If $|x|\leq 2|y|$ this is immediate. Otherwise, an elementary computation shows that
\begin{align*}
(1-\beta)^2\bigg(\frac{|x+y|^2(1-\cos\theta')}{|y|^2}\bigg)= 2 h_\theta(|x|^2/|y|^2),
\end{align*}
where
\[h_\theta(t)= 1+t -\sqrt{(1-t)^2 -4t\sin^2\theta}.\]
Since the functions $h_\theta$ are bounded on $(4,\infty)$ uniformly in $\theta$, the proof is complete.
\end{proof}
As announced previously, we now show that the kernel $\bar{K}$ arises naturally when one tries to estimate 
the maximal Mehler kernel $H^*$ in the global region. The estimate, combined with Lemma~\ref{barK}, provides an alternative proof of the weak type $(1,1)$ of the global part of the maximal operator (see \cite{Salogni})
\begin{equation*}
\mathcal{H}^*f(x)=\sup_{t>0}|e^{-t\As}f(x)|.
\end{equation*}
\begin{proposition}\label{maxglobal}
There exists a constant $C$ such that $H^*(x,y) \leq C \bar K(x,y)$ for every $(x,y)\in G$.
\end{proposition}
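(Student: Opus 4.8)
The plan is to rewrite $H_t(x,y)$ in a parametrization that exhibits exactly the factors occurring in $\bar K$, thereby reducing the statement to a one-variable maximization, and then to settle that maximization by an elementary case analysis.

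First I would pass to the variable $s=e^{-t}\in(0,1)$, so that $H_t(x,y)=\pi^{-n/2}\,s^n(1-s^2)^{-n/2}e^{-|x-sy|^2/(1-s^2)}$, and then to $w=w(s)\coloneqq (1-s)/(1+s)\in(0,1)$. Using the elementary identities
\[
-\frac{|x-sy|^2}{1-s^2}=-|x|^2+|y|^2-\frac{|sx-y|^2}{1-s^2},\qquad \frac{|sx-y|^2}{1-s^2}=\frac{|x+y|^2 w}{4}+\frac{|x-y|^2}{4w}-\frac{|x|^2-|y|^2}{2}
\]
(the second obtained by writing $sx-y=\tfrac12(x+y)(s-1)+\tfrac12(x-y)(s+1)$), together with $s^n(1-s^2)^{-n/2}=2^{-n}w^{-n/2}(1-w)^n$, one gets
\[
H_t(x,y)=\frac{1}{\pi^{n/2}}\,\frac{(1-w)^n}{2^n w^{n/2}}\;e^{-|x|^2+|y|^2}\,e^{\frac{|x|^2-|y|^2}{2}}\,\exp\!\Big(-\frac{|x+y|^2 w}{4}-\frac{|x-y|^2}{4w}\Big).
\]
Since on $G$ one has $\bar K(x,y)=e^{-|x|^2+|y|^2}\,(|x+y|/|x-y|)^{n/2}\,\eta(x,y)\,\Phi(x,y)$ with $\eta(x,y)=e^{\frac{|x|^2-|y|^2}{2}-\frac{\alpha}{2}}$, cancelling the common factors reduces the claim to the elementary estimate
\[
\sup_{w\in(0,1)}\frac{(1-w)^n}{w^{n/2}}\exp\!\Big(-\frac{|x+y|^2 w}{4}-\frac{|x-y|^2}{4w}\Big)\ \lesssim\ \Big(\frac{|x+y|}{|x-y|}\Big)^{n/2}e^{-\alpha/2}\,\Phi(x,y).
\]

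To prove this, I would first pull out $e^{-\alpha/2}$ via $\frac{|x+y|^2 w}{4}+\frac{|x-y|^2}{4w}=\frac{\alpha}{2}+\frac14\big(|x+y|\sqrt w-|x-y|/\sqrt w\big)^2$, leaving $\sup_{w\in(0,1)}\frac{(1-w)^n}{w^{n/2}}e^{-\frac14(|x+y|\sqrt w-|x-y|/\sqrt w)^2}$ to be bounded by $(|x+y|/|x-y|)^{n/2}\Phi(x,y)$. The Gaussian-type factor is maximal at $w_0=|x-y|/|x+y|=\beta$, so I would split into the cases $\beta\ge1$ and $\beta<1$. If $\beta\ge1$ the maximizer lies outside $(0,1)$, $\Phi=\alpha^{-n/2}$ and the target is $|x-y|^{-n}$; since $|x+y|\le|x-y|$ one has $(|x+y|\sqrt w-|x-y|/\sqrt w)^2\ge|x-y|^2(1-w)^2/w$ on $(0,1)$, and the bound follows from $\sup_{u>0}u^{n/2}e^{-u}<\infty$. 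If $\beta<1$ the target is $|x-y|^{-n}+(|x+y|/|x-y|)^{n/2}(1-\beta)^n$; here I would substitute $w=\beta\tau$, so that $(|x+y|\sqrt w-|x-y|/\sqrt w)^2=\alpha(1-\tau)^2/\tau$ and $w^{-n/2}=(|x+y|/|x-y|)^{n/2}\tau^{-n/2}$, and treat $\tau\in(0,\tfrac12)$, $\tau\in[\tfrac12,1]$ and $\tau\ge1$ separately, using $(1-\beta\tau)^n\le 2^{n-1}\big((1-\beta)^n+\beta^n(1-\tau)^n\big)$ for $\tau\le1$, the identities $(|x+y|/|x-y|)^{n/2}\beta^n=\beta^{n/2}\le1$ and $\alpha^{-n/2}\le|x-y|^{-n}$ (from $|x-y|\le|x+y|$), and repeatedly $\sup_{u>0}u^a e^{-u}<\infty$.

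The crux is this last case $\beta<1$: one must check that the contributions of $H_t$ with $w$ far from the optimal $w_0=\beta$ are still absorbed by $\bar K$, the small-$\tau$ regime (i.e.\ small $t$) accounting for the singular term $\alpha^{-n/2}$ of $\Phi$ and the regime $\tau\approx1$ for the term $(1-\beta)^n$; matching each of the pieces produced by the trichotomy against the correct summand of $\Phi$ is the only delicate bookkeeping. Apart from this, every step is elementary, and no property specific to the global region is needed beyond the presence of $\chi_G$ in the definition of $\bar K$, which is immaterial since the statement is confined to $G$.
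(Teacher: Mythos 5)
Your proposal is correct and follows essentially the same route as the paper: your variable $w$ coincides with the paper's rescaled variable $s$ (the paper writes $t=\tau(s)=\log\frac{1+s}{1-s}$, which is the inverse of your $s\mapsto w=(1-s)/(1+s)$), your substitution $w=\beta\tau$ is the paper's $s=\beta\sigma$, and the resulting one-variable bound $\sup_{\tau}\frac{(1-\beta\tau)^n}{\tau^{n/2}}e^{-\frac14\alpha\varphi(\tau)}\lesssim\Phi(x,y)$ is exactly the paper's inequality \eqref{supPhi}. The only difference is cosmetic: you handle the case $\beta<1$ by a trichotomy in $\tau$, whereas the paper applies $(1-\sigma\beta)^n\lesssim|1-\sigma|^n+\sigma^n(1-\beta)^n$ once and bounds two separate suprema; both use the same elementary fact $\sup_{u>0}u^ae^{-u}<\infty$ together with $\alpha\geq 1/4$ from Lemma~\ref{lemmaprel}, (1), so they are equivalent.
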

\begin{proof}
We begin by applying the rescaling
\begin{equation}\label{taus}
\tau(s)\coloneqq \log \frac{1+s}{1-s},
\end{equation}
which was introduced in~\cite{GMMST}, to $H_t$. Then $H^*(x,y)= \sup_{0<s<1} H_{\tau(s)}(x,y)$. Since
\begin{align*}
H_{\tau(s)}(x,y) = e^{-|x|^2+|y|^2} e^{\frac{|x|^2}{2}-\frac{|y|^2}{2}}\frac{(1-s)^n}{(4s)^{n/2}} e^{-\frac{1}{4}(s|x+y|^2+\frac{1}{s}|x-y|^2)},
\end{align*}
we obtain\[
H^*(x,y) = e^{-|x|^2+|y|^2} \eta(x,y) \sup_{0<s<1}\frac{(1-s)^n}{(4s)^{n/2}} e^{-\frac{1}{4}(s|x+y|^2+\frac{1}{s}|x-y|^2 -2|x-y||x+y|)}.
\] 
After the substitution $s/\beta=\sigma$ in the supremum, we get
\begin{align*}
H^*(x,y)\approx e^{-|x|^2+|y|^2} \bigg( \frac{|x+y|}{|x-y|}\bigg)^{n/2} \eta(x,y)\sup_{0<\sigma< 1/\beta}\frac{(1-\sigma \beta)^n}{\sigma^{n/2}} e^{-\frac{1}{4}\alpha\varphi(\sigma)},
\end{align*}
where
\[\varphi(\sigma)\coloneqq \sigma + \frac{1}{\sigma}-2 = \frac{(\sigma-1)^2}{\sigma}.\]
It remains then to estimate the supremum. Since its argument is a decreasing function of $\beta$, if $\beta\geq1$ we get 
\[
\sup_{0<\sigma< 1/\beta}\frac{(1-\sigma \beta)^n}{\sigma^{n/2}} e^{-\frac{1}{4}\alpha\varphi(\sigma)}\leq \sup_{0<\sigma<\infty}\frac{(1-\sigma)^n}{\sigma^{n/2}} e^{-\frac{1}{4}\alpha\varphi(\sigma)}\lesssim \frac{1}{\alpha^{n/2}}.
\]
If $\beta<1$, observe that
\[
(1-\sigma\beta)^n =[(1-\sigma)+\sigma(1-\beta)]^n \lesssim |1-\sigma|^n + \sigma^n (1-\beta)^n
\]
so that
\begin{align*}
\sup_{0<\sigma< 1/\beta}\frac{(1-\sigma \beta)^n}{\sigma^{n/2}} e^{-\frac{1}{4}\alpha\varphi(\sigma)} &\lesssim \sup_{0<\sigma< \infty} \frac{|1-\sigma|^n}{\sigma^{n/2}} e^{-\alpha\varphi(\sigma)} + (1-\beta)^n\sup_{0<\sigma< \infty} \sigma^{n/2}e^{-\alpha\varphi(\sigma)} \\&\lesssim \frac{1}{\alpha^{n/2}} + (1-\beta)^n.
\end{align*}
Summarizing, we have proved that
\begin{equation}\label{supPhi}
\sup_{0<\sigma< 1/\beta}\frac{(1-\sigma \beta)^n}{\sigma^{n/2}} e^{-\frac{1}{4}\alpha\varphi(\sigma)}\lesssim \Phi(x,y),
\end{equation}
and this completes the proof.
\end{proof}
We are now ready to prove Theorem~\ref{teo:weaktype}.
\subsection{Proof of Theorem~\ref{teo:weaktype}, (ii)} As already said, we treat the local and the global parts of $\Rs_{\lambda}$ separately.
\begin{proposition}\label{proplocalriesz}
For every $\lambda\geq0$, $\Rs_{\lambda,\loc}$ is of weak type $(1,1)$.
\end{proposition}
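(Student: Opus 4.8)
The plan is to reduce the statement to classical local Calderón–Zygmund theory, which is available because $\gamma_{-1}$ is locally doubling on admissible balls (Lemma~\ref{adm}). First note that $\Rs_\lambda$ is bounded on $L^2(\gamma_{-1})$: this follows from the spectral gap of $\As$, since with $g=(\As+\lambda I)^{-1/2}f$ the Dirichlet form identity $\|\nabla g\|_{L^2(\gamma_{-1})}^2=2(\As g,g)_{L^2(\gamma_{-1})}$ gives $\|\Rs_\lambda f\|_{L^2(\gamma_{-1})}^2\le 2((\As+\lambda I)g,g)_{L^2(\gamma_{-1})}=2\|f\|_{L^2(\gamma_{-1})}^2$. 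By \cite[Theorem 3.2.8]{Salogni} it then suffices to verify that the Schwartz kernel $K_{\Rs_\lambda,\loc}=\chi K_{\Rs_\lambda}$, which is supported in $N_2$, satisfies the standard size and regularity estimates
\[
|K_{\Rs_\lambda}(x,y)|\lesssim \frac{1}{|x-y|^{n}},\qquad
|\nabla_x K_{\Rs_\lambda,\loc}(x,y)|+|\nabla_y K_{\Rs_\lambda,\loc}(x,y)|\lesssim \frac{1}{|x-y|^{n+1}}
\]
for $(x,y)\in N_2$ with $x\neq y$. By Lemma~\ref{adm} the first of these is precisely the Calderón–Zygmund size bound $|k_{\Rs_\lambda}(x,y)|\lesssim\gamma_{-1}(B(x,|x-y|))^{-1}$, since $B(x,|x-y|)\in\mathcal{B}_2$ for $(x,y)\in N_2$; the passage between $k_{\Rs_\lambda}$ and $K_{\Rs_\lambda}$ in the regularity bound is harmless on $N_2$, where $|y|\lesssim|x-y|^{-1}$, so the term produced by $\nabla_y\gamma_{-1}$ is absorbed into the size estimate.

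For the size bound I would start from~\eqref{tRieszkernel} and split the $r$-integral at $r=\tfrac12$. On $(0,\tfrac12)$ the factors $r^{n+\lambda-1}$, $(1-r^2)^{-(n+2)/2}$ and $(-\log r)^{-1/2}$ are bounded, and $|x-ry|\,e^{-|x-ry|^2/(1-r^2)}\le \sup_{u\ge0}u\,e^{-u^2}<\infty$, so this part contributes a constant, which is $\lesssim|x-y|^{-n}$ because $|x-y|\le 2$ on $N_2$. On $(\tfrac12,1)$ one has $-\log r\approx 1-r^2$, so absorbing $(-\log r)^{-1/2}$ into a half-power of $(1-r^2)$,
\[
\Big|\int_{1/2}^1\dots\,\dd r\Big|\lesssim \int_0^1\frac{|x-ry|}{(1-r^2)^{(n+3)/2}}\,e^{-|x-ry|^2/(1-r^2)}\,\dd r=K_{3,1}(x,y)\lesssim\frac{1}{|x-y|^{n}},
\]
the last step by Lemma~\ref{lemmalocal} with $(\mu,\nu)=(3,1)$ (note $\mu>\nu+1$).

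For the regularity bound I would write $\nabla(\chi K_{\Rs_\lambda})=(\nabla\chi)\,K_{\Rs_\lambda}+\chi\,\nabla K_{\Rs_\lambda}$. The first term is $\lesssim|x-y|^{-1}\,|K_{\Rs_\lambda}(x,y)|\lesssim|x-y|^{-n-1}$ by the size bound together with $|\nabla\chi(x,y)|\lesssim|x-y|^{-1}$. For the second term I would differentiate~\eqref{tRieszkernel} under the integral sign: each of $\nabla_x$, $\nabla_y$ either falls on the polynomial factor $x-ry$, producing an integrand dominated near $r=1$ (again using $-\log r\approx 1-r^2$) by that of $K_{3,0}$, or on the Gaussian, producing an integrand dominated by that of $K_{5,2}$; after the same splitting at $r=\tfrac12$, Lemma~\ref{lemmalocal} applied with $(\mu,\nu)=(3,0)$ and $(\mu,\nu)=(5,2)$ (both satisfying $\mu>\nu+1$) yields the bound $|x-y|^{-n-1}$ in either case. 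Together with the first paragraph this gives the proposition.

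I do not expect a conceptual obstacle here — the author themselves call this part ``rather standard'' — and the only point that genuinely has to be checked is that every pair $(\mu,\nu)$ arising after differentiation and after absorbing $(-\log r)^{-1/2}$ still satisfies the hypothesis $\mu>\nu+1$ of Lemma~\ref{lemmalocal}; everything else is bookkeeping, relying on the availability of the local Calderón–Zygmund machinery of~\cite{Salogni} granted by the local doubling of $\gamma_{-1}$ on admissible balls.
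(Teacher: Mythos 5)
Your proof is correct and follows essentially the same route as the paper: reduce to the local Calderón--Zygmund theorem of~\cite[Theorem 3.2.8]{Salogni}, dominate the kernel by $K_{3,1}$ after absorbing $(-\log r)^{-1/2}$ into a half-power of $1-r^2$, and dominate the gradient (accounting also for $\nabla\chi$) by $K_{3,0}+K_{5,2}+|x-y|^{-1}K_{3,1}$, then invoke Lemma~\ref{lemmalocal}. The paper states exactly these bounds without the explicit split at $r=1/2$ or the elementary $L^2$ estimate, both of which you add for completeness; these are harmless and the argument is otherwise identical.
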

\begin{proof}
Observe that by Lemma~\ref{lemmalocal}
\[\abs{K_{\Rs_{\lambda,\loc}}(x,y)}\lesssim K_{3,1}(x,y)\lesssim |x-y|^{-n}\]
and that for every $j=1,\dots, n$
\begin{align*}
\abs{\nabla_{x} K_{(\Rs_{\lambda,\loc})_j}(x,y)} + \abs{\nabla_{y} K_{(\Rs_{\lambda,\loc})_j}(x,y)}&\lesssim K_{3,0}(x,y) + K_{5,2}(x,y) + K_{3,1}(x,y)|x-y|^{-1}\\& \lesssim |x-y|^{-(n+1)}
\end{align*}
for every $(x,y)\in N_2$ such that $x\neq y$, $\lambda\geq0$, $j=1,\dots,n$. Therefore, $\Rs_{\lambda, \loc}$ is a Calder\'on-Zygmund operator and the conclusion follows by~\cite[Theorem 3.2.8]{Salogni}
\end{proof}
We now consider the global part. Observe first that for every $\lambda\geq 0$, by~\eqref{tRieszkernel}
\[
|K_{\Rs_\lambda}(x,y)|\lesssim \int_0^\infty \frac{e^{-(n+\lambda)t}}{(1-e^{-2t})^{(n+2)/2}}\frac{|x-e^{-t}y|}{\sqrt{1-e^{-2t}}}e^{-\frac{|x-e^{-t}y|^2}{1-e^{-2t}}} \, \dd t \eqqcolon K_\lambda(x,y),
\]
since $t \geq (1-e^{-2t})/2$ for every $t\geq 0$.
\begin{proposition}\label{rieszglobal}
Let $\lambda \geq 1$. For every $(x,y)\in G$
\begin{equation*}
K_\lambda(x,y)\leq C\bar{K}(x,y).
\end{equation*}
In particular, $\Rs_{\lambda, \glob}$ is of weak type $(1,1)$ for every $\lambda \geq 1$.
\end{proposition}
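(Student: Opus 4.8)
The plan is to adapt the proof of Proposition~\ref{maxglobal}. Writing $K_\lambda$ with the two powers of $1-e^{-2t}$ combined,
\[
K_\lambda(x,y)=\int_0^\infty\frac{e^{-(n+\lambda)t}\,|x-e^{-t}y|}{(1-e^{-2t})^{(n+3)/2}}\,\exp\!\left(-\frac{|x-e^{-t}y|^2}{1-e^{-2t}}\right)\dd t,
\]
I would perform the change of variables $t=\tau(\beta\sigma)$, $\sigma\in(0,1/\beta)$, with $\tau$ as in~\eqref{taus} (so that $e^{-\tau(s)}=\frac{1-s}{1+s}$, $1-e^{-2\tau(s)}=\frac{4s}{(1+s)^2}$, $x-e^{-\tau(s)}y=\frac{(x-y)+s(x+y)}{1+s}$, $\tau'(s)=\frac{2}{1-s^2}$). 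Exactly as in Proposition~\ref{maxglobal}, for $s=\beta\sigma$ one computes $|x-e^{-t}y|^2/(1-e^{-2t})=\tfrac14\alpha\varphi(\sigma)+\tfrac12\alpha+\tfrac12(|x|^2-|y|^2)$ with $\varphi(\sigma)=\sigma+\tfrac1\sigma-2$, so that $\exp(-|x-e^{-t}y|^2/(1-e^{-2t}))=e^{-\alpha\varphi(\sigma)/4}\,e^{-|x|^2+|y|^2}\,\eta(x,y)$; moreover $|x-e^{-t}y|\le\frac{|x-y|+s|x+y|}{1+s}=\frac{|x-y|(1+\sigma)}{1+s}$. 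Collecting the powers of $1-s$, $1+s$ and $s$ and using $|x-y|\,\beta^{-(n+1)/2}=\big(\tfrac{|x+y|}{|x-y|}\big)^{n/2}\sqrt\alpha$, this gives
\[
K_\lambda(x,y)\le C\,e^{-|x|^2+|y|^2}\Big(\tfrac{|x+y|}{|x-y|}\Big)^{n/2}\eta(x,y)\,\sqrt\alpha\int_0^{1/\beta}\frac{(1-\beta\sigma)^{n+\lambda-1}(1+\beta\sigma)^{1-\lambda}(1+\sigma)}{\sigma^{(n+3)/2}}\,e^{-\alpha\varphi(\sigma)/4}\,\dd\sigma.
\]
The hypothesis $\lambda\ge1$ enters precisely here: for $\sigma\in(0,1/\beta)$ it yields $(1+\beta\sigma)^{1-\lambda}\le1$ and $(1-\beta\sigma)^{n+\lambda-1}\le(1-\beta\sigma)^n$. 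Since $\bar K(x,y)=e^{-|x|^2+|y|^2}\big(\tfrac{|x+y|}{|x-y|}\big)^{n/2}\eta(x,y)\Phi(x,y)$ on $G$, the claimed estimate reduces to
\[
\mathcal I:=\sqrt\alpha\int_0^{1/\beta}\frac{(1-\beta\sigma)^n(1+\sigma)}{\sigma^{(n+3)/2}}\,e^{-\alpha\varphi(\sigma)/4}\,\dd\sigma\ \lesssim\ \Phi(x,y),\qquad (x,y)\in G.
\]

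To prove this I would distinguish the two cases in the definition of $\Phi$. If $\beta\ge1$, the integration runs over $(0,1/\beta)\subseteq(0,1)$, so $1+\sigma\le2$ and $1-\beta\sigma\le1-\sigma$; writing $\frac{(1-\sigma)^n}{\sigma^{n/2}}=\varphi(\sigma)^{n/2}$ and using $\sup_{v>0}v^{n/2}e^{-\alpha v/8}\le c_n\,\alpha^{-n/2}$ one gets $\mathcal I\lesssim\alpha^{(1-n)/2}\int_0^1\sigma^{-3/2}e^{-\alpha\varphi(\sigma)/8}\,\dd\sigma$; splitting this integral at $\sigma=\tfrac14$ and using $\varphi(\sigma)\ge\frac1{2\sigma}$ on $(0,\tfrac14)$ and $\varphi(\sigma)\ge(1-\sigma)^2$ on $(\tfrac14,1)$, both pieces are $\lesssim\alpha^{-1/2}$, so $\mathcal I\lesssim\alpha^{-n/2}=\Phi$. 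If $\beta<1$, then $\alpha\ge\tfrac14$ by Lemma~\ref{lemmaprel}\,(1), and, as in Proposition~\ref{maxglobal}, I would use $(1-\beta\sigma)^n\lesssim|1-\sigma|^n+\sigma^n(1-\beta)^n$ to split $\mathcal I$ into two pieces. In the piece carrying $(1-\beta)^n$ one is left with $\sqrt\alpha\int_0^{1/\beta}\sigma^{(n-3)/2}(1+\sigma)e^{-\alpha\varphi(\sigma)/4}\,\dd\sigma$, which — after extending the integration to $(0,\infty)$ and estimating separately on $(0,\tfrac12)$, $(\tfrac12,2)$, $(2,\infty)$ by elementary Gaussian bounds (using $\varphi(\sigma)\ge\frac1{4\sigma}$, $\varphi(\sigma)\gtrsim(1-\sigma)^2$, $\varphi(\sigma)\gtrsim\sigma$ respectively, together with $\alpha\ge\tfrac14$) — is $\lesssim1$; so this piece is $\lesssim(1-\beta)^n$. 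The piece carrying $|1-\sigma|^n$ is treated as in the case $\beta\ge1$ (writing $|1-\sigma|^n\sigma^{-n/2}=\varphi(\sigma)^{n/2}$ and pulling out the supremum) and is $\lesssim\alpha^{-n/2}$. Hence $\mathcal I\lesssim\alpha^{-n/2}+(1-\beta)^n=\Phi$, which proves $K_\lambda\le C\bar K$ on $G$.

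Finally, for the weak type $(1,1)$ of $\Rs_{\lambda,\glob}$: since $K_{\Rs_{\lambda,\glob}}=(1-\chi)K_{\Rs_\lambda}$ with $0\le1-\chi\le\chi_G$, for each $j=1,\dots,n$ and all $(x,y)$ one has $|K_{(\Rs_{\lambda,\glob})_j}(x,y)|\le\chi_G(x,y)\,|K_{\Rs_\lambda}(x,y)|\lesssim\chi_G(x,y)\,K_\lambda(x,y)\le C\bar K(x,y)$ (the last inequality by the estimate just proved when $(x,y)\in G$, and trivially otherwise). As $\bar{\mathcal K}$ is of weak type $(1,1)$ by Lemma~\ref{barK}, and since an operator whose kernel is, in absolute value, pointwise dominated by that of an operator of weak type $(1,1)$ is itself of weak type $(1,1)$, each $(\Rs_{\lambda,\glob})_j$, hence $\Rs_{\lambda,\glob}$, is of weak type $(1,1)$.

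The main obstacle is the bookkeeping in the change of variables — obtaining the exact exponents on $1-s$, $1+s$, $s$, which is what makes the threshold $\lambda=1$ visible — together with arranging the family of elementary Gaussian estimates in the case $\beta<1$ so that the powers of $\alpha$ match $\Phi$ exactly and the lower bound $\alpha\ge1/4$ is genuinely used; none of these steps is deep, but they require care.
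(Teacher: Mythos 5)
Your proof is correct, and the first half --- reducing the $\lambda$-dependence (you keep $\lambda$ explicit and use $(1+\beta\sigma)^{1-\lambda}\le1$ and $(1-\beta\sigma)^{n+\lambda-1}\le(1-\beta\sigma)^n$, where the paper simply notes $K_\lambda\le K_1$ and works with $\lambda=1$), and the changes of variables $t=\tau(s)$, $s=\beta\sigma$ leading to the displayed bound by $e^{-|x|^2+|y|^2}\big(\tfrac{|x+y|}{|x-y|}\big)^{n/2}\eta\cdot\sqrt\alpha\int_0^{1/\beta}\frac{(1-\beta\sigma)^n(1+\sigma)}{\sigma^{(n+3)/2}}e^{-\alpha\varphi(\sigma)/4}\,\dd\sigma$ --- matches the paper exactly. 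Where you diverge is the estimate of this last integral. The paper uses a Hölder-type trick: it factors out $\sup_\sigma\big(\tfrac{(1-\beta\sigma)^n}{\sigma^{n/2}}e^{-\alpha\varphi(\sigma)/4}\big)^{1-1/n}\lesssim\Phi^{1-1/n}$ (by~\eqref{supPhi}), leaving inside exactly $\frac{(1-\beta\sigma)(1+\sigma)}{\sigma^2}e^{-\alpha\varphi(\sigma)/(4n)}$, which is precisely what the change of variables $t=\alpha\varphi(\sigma)$ turns into a clean one-dimensional integral via $\sigma_\pm(t)$; this is then shown $\lesssim\Phi^{1/n}$ in both cases $\beta\gtrless1$. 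You instead split $(1-\beta\sigma)^n\lesssim|1-\sigma|^n+\sigma^n(1-\beta)^n$ at the outset, use $|1-\sigma|^n\sigma^{-n/2}=\varphi(\sigma)^{n/2}$ together with $\sup_v v^{n/2}e^{-\alpha v/8}\lesssim\alpha^{-n/2}$ to pull out the full power $\alpha^{-n/2}$, and estimate the remaining pieces by elementary Gaussian bounds. Both routes work; yours is more elementary (no fractional power of a sup, no $\sigma_\pm(t)$) at the cost of several more by-hand Gaussian estimates, and it more closely mirrors the argument in Proposition~\ref{maxglobal}. The concluding kernel-domination step is identical to the paper's.
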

\begin{proof}
First of all, observe that it is enough to prove the statement for $\lambda=1$, since $K_\lambda \leq K_1$ for $\lambda\geq 1$. The change of variable $t=\tau(s)$ (recall~\eqref{taus}) in the integral defining $K_1$ yields
\begin{align*}
K_{1}(x,y)
%&= \int_0^1 \frac{(1-s)^n}{(4s)^{n/2+1}}\frac{\tau(s)^{-1/2}}{1+s} \abs{(1+s)x -(1-s)y} e^{-\frac{\abs{(1+s)x-(1-s)y}^2}{4s}}\, \dd s \\& 
&\lesssim \int_0^1 \frac{(1-s)^n}{s^{(n+3)/2}} \abs{(1+s)x -(1-s)y} e^{-\frac{\abs{(1+s)x-(1-s)y}^2}{4s}}\, \dd s\\& = e^{-|x|^2+|y|^2} \eta(x,y) \int_0^1 \frac{(1-s)^n}{s^{(n+3)/2}}\abs{(1+s)x -(1-s)y} e^{-\frac{1}{4}\alpha \varphi(s/\beta)}\, \dd s,
\end{align*}
After the change of variable $s/\beta =\sigma$ in the integral we obtain
\begin{multline*}
\int_0^1 \frac{(1-s)^n}{s^{(n+3)/2}}\abs{(1+s)x -(1-s)y} e^{-\frac{1}{4}\alpha \varphi(s/\beta)}\, \dd s \\ = \frac{1}{\beta^{n/2}} \int_0^{1/\beta} \frac{(1-\sigma\beta)^n}{\sigma^{(n+3)/2}}\frac{\abs{(1+\sigma\beta)x - (1-\sigma\beta)y}}{\sqrt{\beta}}e^{-\frac{1}{4}\alpha \varphi(\sigma)}\, \dd \sigma.
\end{multline*}
Since
\begin{align*}
\frac{|(1+\sigma \beta)x -(1-\sigma\beta)y|}{\sqrt{\beta}} = \frac{|(x-y) +\sigma\beta(x+y)|}{\sqrt{\beta}} \leq \frac{|x-y|+\sigma|x-y|}{\sqrt{\beta}} = (1+\sigma)\sqrt{\alpha},
\end{align*}
we proved that
\begin{multline}\label{sqrtalpha}
K_{1}(x,y)\lesssim e^{-|x|^2+|y|^2}\bigg(\frac{|x+y|}{|x-y|}\bigg)^{n/2} \eta(x,y) \\ \times \sqrt{\alpha}\int_0^{1/\beta} \frac{(1-\sigma\beta)^n}{\sigma^{(n+3)/2}}(1+\sigma)e^{-\frac{1}{4}\alpha \varphi(\sigma)}\, \dd \sigma.
\end{multline}
Thus it remains only to prove that the quantity in~\eqref{sqrtalpha} is controlled by (a constant times) $\Phi$. Observe first that
\begin{multline*}
\sqrt{\alpha}\int_0^{1/\beta} \frac{(1-\sigma\beta)^n}{\sigma^{(n+3)/2}}(1+\sigma)e^{-\frac{1}{4}\alpha \varphi(\sigma)}\, \dd \sigma\\ \leq \sqrt{\alpha}\sup_{0<\sigma<1/\beta} \bigg( \frac{(1-\sigma \beta)^{n}}{\sigma^{n/2}}e^{-\frac{1}{4}\alpha \varphi(\sigma)}\bigg)^{1-\frac{1}{n}} \int_0^{1/\beta} (1+\sigma)\frac{1-\beta\sigma}{\sigma^2} e^{-\frac{1}{4n}\alpha\varphi(\sigma)}\, \dd \sigma
\\ \leq C \Phi(x,y)^{1-\frac{1}{n}} \sqrt{\alpha}\int_0^{1/\beta} (1+\sigma)\frac{1-\beta\sigma}{\sigma^2} e^{-\frac{1}{4n}\alpha\varphi(\sigma)}\, \dd \sigma.
\end{multline*}
The last inequality holds by~\eqref{supPhi}. We now separate the cases $\beta\geq 1$ and $\beta <1$. Observe that $\varphi$ is invertible in the intervals $(0,1)$ and $(1,\infty)$.

\textbf{1.} If $\beta\geq 1$, we have $1/\beta\leq 1$, and thus we can make the change of variables $\alpha\varphi(\sigma)=t$ in the integral, which gives
\begin{equation}\label{sigmameno}
\sigma = 1- \frac{\sqrt{t^2 +4\alpha t} -t}{2\alpha} \eqqcolon \sigma_-(t)\in (0,1]
\end{equation}
and hence
\begin{align*}
\sqrt{\alpha} \int_0^{1/\beta}\frac{(1-\sigma \beta)}{\sigma^2}(1+\sigma)e^{-\frac{1}{4n}\alpha \varphi(\sigma)}\, \dd \sigma &=\frac{1}{\sqrt{\alpha}}\int_{\alpha \varphi(1/\beta)}^\infty \frac{(1-\beta\sigma_-(t))}{1-\sigma_-(t)}e^{-\frac{1}{4n} t}\, \dd t \\& \leq \frac{1}{\sqrt{\alpha}} \int_0^\infty e^{-\frac{1}{4n} t}\, \dd t = C \Phi(x,y)^{\frac{1}{n}}.
\end{align*}
The first inequality holds since $\beta\geq 1$, and hence $1-\beta\sigma_-(t)\leq 1-\sigma_-(t)$.

\textbf{2.} If $\beta<1$ we split the integral
\[
\int_0^{1/\beta}\frac{(1-\sigma \beta)}{\sigma^2}(1+\sigma) e^{-\frac{1}{4n}\alpha \varphi(\sigma)}\, \dd \sigma = \int_0^{1}\dots \, \dd \sigma + \int_1^{1/\beta}\dots\, \dd \sigma,
\]
and we estimate the two integrals separately. We shall make the change of variables $\alpha \varphi(\sigma) = t$ in both cases. In the first integral we get as before $\sigma=\sigma_-(t)$ (see~\eqref{sigmameno}) and hence
\begin{align*}
\int_0^{1}\frac{(1-\sigma \beta)}{\sigma^2}(1+\sigma)e^{-\frac{1}{4n}\alpha\varphi(\sigma)}\, \dd \sigma =\frac{1}{\alpha}\int_0^\infty \frac{(1-\beta\sigma_-(t))}{1-\sigma_-(t)}e^{-\frac{1}{4n} t}\, \dd t.
\end{align*}
It is not hard to see that
\begin{align*}
1-\sigma_-(t) = \frac{\sqrt{t^2 +4\alpha t} -t}{2\alpha}\geq C\min \bigg(1,\frac{\sqrt{t}}{\sqrt{\alpha}}\bigg) \geq C\ \frac{\min (1,\sqrt{t})}{\sqrt{\alpha}},
\end{align*}
the first inequality since $\sqrt{1+z} -1\geq C \min (z,\sqrt{z})$, and the second inequality since $\alpha\geq 1/4$ by Lemma~\ref{lemmaprel}, (1).

Moreover
\[
1-\beta \sigma_-(t)= 1-\beta +\frac{\beta}{2\alpha}\bigg(\sqrt{t^2+4\alpha t} -t\bigg) \leq 1-\beta +\beta \frac{\sqrt{t}}{\sqrt{\alpha}} \leq 1-\beta + \frac{\sqrt{t}}{\sqrt{\alpha}},
\]
thanks to the inequality $\sqrt{a+b}\leq \sqrt{a}+ \sqrt{b}$. Therefore
\begin{align*}
\frac{1}{\alpha}\int_0^\infty \frac{(1-\beta\sigma_-(t))}{1-\sigma_-(t)^2}e^{-\frac{1}{4n} t}\, \dd t&\leq C \frac{1}{\sqrt{\alpha}}\int_0^\infty e^{-\frac{1}{4n} t}\max\bigg(1,\frac{1}{\sqrt{t}}\bigg)\bigg(1-\beta + \frac{\sqrt{t}}{\sqrt{\alpha}}\bigg)\, \dd t \\&\leq C\bigg(\frac{1-\beta}{\sqrt{\alpha}} +\frac{1}{\alpha}\bigg).
\end{align*}
We now consider the second integral. Here the change of variables $\alpha \varphi(\sigma)=t$ gives
\begin{equation}\label{sigmapiu}
\sigma =1+ \frac{\sqrt{t^2 +4\alpha t}+t}{2\alpha} \eqqcolon \sigma_+(t) \geq 1
\end{equation}
and
\begin{align*}
\int_1^{1/\beta}\frac{(1-\sigma \beta)}{\sigma^2}(1+\sigma) e^{-\frac{1}{4n}\alpha \varphi(\sigma)}\, \dd \sigma
&=\frac{1}{\alpha}\int_0^{\alpha \varphi(1/\beta)} \frac{(1-\beta\sigma_+(t))}{\sigma_+(t)-1}e^{-\frac{t}{4n} }\, \dd t\\
& \leq \frac{(1-\beta)}{\alpha}\int_0^\infty \frac{1}{\sigma_+(t)-1}e^{-\frac{t}{4n} }\, \dd t.
\end{align*}
Observe finally that
\[
\sigma_+(t)-1 = \frac{t+\sqrt{t^2+4\alpha t}}{2\alpha}\geq2 \frac{\sqrt{t}}{\sqrt{\alpha}}
\]
so that
\[
\frac{(1-\beta)}{\alpha}\int_0^\infty \frac{e^{-\frac{t}{4n} }}{\sigma_+(t)-1}\, \dd t \leq \frac{(1-\beta)}{\alpha} \int_0^\infty \frac{\sqrt{\alpha}}{\sqrt{t}} e^{-\frac{1}{4n} t}\, \dd t = C\frac{1-\beta}{\sqrt{\alpha}}
\]
which concludes the proof.
\end{proof}
\subsection{Proof of Theorem~\ref{teo:weaktype}, (i)}\label{subsec:wek:Impow}
The strategy of the proof is essentially the same as that for the Riesz transforms of the previous section. We shall then be very sketchy.
\begin{proposition}\label{impowloc}
For every $\lambda\geq0$ and every $u\in\R\setminus \{0\}$, $(\As+\lambda I)^{iu}_\loc$ is of weak type $(1,1)$ .
\end{proposition}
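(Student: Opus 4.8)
The plan is to follow the proof of Proposition~\ref{proplocalriesz} almost verbatim: I would show that $(\As+\lambda I)^{iu}_\loc$ is a Calder\'on-Zygmund operator in the sense of~\cite[Theorem 3.2.8]{Salogni}, which immediately yields the weak type $(1,1)$. Two ingredients are needed. The first is $L^2(\gamma_{-1})$-boundedness: by the spectral theorem $(\As+\lambda I)^{iu}$ is bounded (in fact unitary) on $L^2(\gamma_{-1})$, since $\abs{(k+\lambda)^{iu}}=1$ for every integer $k\geq n$ and every $\lambda\geq 0$; hence $(\As+\lambda I)^{iu}_\loc$, whose kernel with respect to the Lebesgue measure is $\chi$ times the kernel in~\eqref{impowLeb} with $\chi$ supported in $N_2$, is bounded on $L^2(\gamma_{-1})$ as well. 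The second ingredient, which is the substantial part, is the standard size and gradient estimates for this local kernel on $N_2$.

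For the size estimate I would start from~\eqref{impowLeb}, use that $\abs{(-\log r)^{-iu-1}}=(-\log r)^{-1}$ together with the elementary bound $(-\log r)^{-1}\lesssim (1-r^2)^{-1}$ valid on $(0,1)$, and obtain
\[
\abs{K_{(\As+\lambda I)^{iu}}(x,y)}\lesssim \int_0^1 \frac{1}{(1-r^2)^{(n+2)/2}}\, e^{-\abs{\phi(r,x,y)}^2}\, \dd r = K_{2,0}(x,y)\lesssim \abs{x-y}^{-n}
\]
for $(x,y)\in N_2$ with $x\neq y$, by Lemma~\ref{lemmalocal} applied with $(\mu,\nu)=(2,0)$ (the same bound also follows at once from Lemma~\ref{lemmaKz} with $\Re z=0$). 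For the gradient, only the Gaussian factor in~\eqref{impowLeb} depends on $x$ and $y$, so differentiating under the integral sign (legitimate for $x\neq y$) produces a factor $2(x-ry)/(1-r^2)$ or $2r(x-ry)/(1-r^2)$; bounding $r\leq 1$ and again $(-\log r)^{-1}\lesssim(1-r^2)^{-1}$, I would get
\[
\abs{\nabla_x K_{(\As+\lambda I)^{iu}}(x,y)}+\abs{\nabla_y K_{(\As+\lambda I)^{iu}}(x,y)}\lesssim \int_0^1 \frac{\abs{x-ry}}{(1-r^2)^{(n+4)/2}}\, e^{-\abs{\phi(r,x,y)}^2}\, \dd r = K_{4,1}(x,y)\lesssim \abs{x-y}^{-(n+1)}
\]
on $N_2$ with $x\neq y$, by Lemma~\ref{lemmalocal} with $(\mu,\nu)=(4,1)$. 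Finally, since $K_{(\As+\lambda I)^{iu}_\loc}=\chi K_{(\As+\lambda I)^{iu}}$, the Leibniz rule $\nabla_x(\chi K)=(\nabla_x\chi)K+\chi\nabla_x K$ (and similarly in $y$) together with the bound $\abs{\nabla_x\chi}+\abs{\nabla_y\chi}\lesssim\abs{x-y}^{-1}$ upgrades the two displays to $\abs{K_{(\As+\lambda I)^{iu}_\loc}(x,y)}\lesssim\abs{x-y}^{-n}$ and $\abs{\nabla_x K_{(\As+\lambda I)^{iu}_\loc}(x,y)}+\abs{\nabla_y K_{(\As+\lambda I)^{iu}_\loc}(x,y)}\lesssim\abs{x-y}^{-(n+1)}$ for $(x,y)\in N_2$, $x\neq y$. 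Thus $(\As+\lambda I)^{iu}_\loc$ is a Calder\'on-Zygmund operator and~\cite[Theorem 3.2.8]{Salogni} concludes the proof.

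I do not anticipate a genuine obstacle here: this is the imaginary-power analogue of Proposition~\ref{proplocalriesz}, with the kernels $K_{2,0}$ and $K_{4,1}$ playing the role of $K_{3,1}$, $K_{3,0}$, $K_{5,2}$ there. The only point that calls for a moment's attention is the logarithmic weight $(-\log r)^{-1}$, which on its own is not integrable near $r=1$; this causes no trouble because it always appears multiplied by the Gaussian $e^{-\abs{\phi(r,x,y)}^2}$, and Lemma~\ref{lemmalocal} is tailored exactly to absorb the extra $(1-r^2)^{-1}$ singularity produced by the bound $(-\log r)^{-1}\lesssim(1-r^2)^{-1}$.
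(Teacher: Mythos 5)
Your proposal is correct and takes essentially the same approach as the paper: the paper likewise bounds $\abs{K_{(\As+\lambda I)^{iu}_\loc}(x,y)}\lesssim K_{2,0}(x,y)\lesssim\abs{x-y}^{-n}$ and $\abs{\nabla_{x} K_{(\As+\lambda I)^{iu}_\loc}(x,y)}+\abs{\nabla_{y} K_{(\As+\lambda I)^{iu}_\loc}(x,y)}\lesssim K_{4,1}(x,y)+K_{2,0}(x,y)\abs{x-y}^{-1}\lesssim\abs{x-y}^{-(n+1)}$ via Lemma~\ref{lemmalocal} and the cutoff's Leibniz term, then invokes~\cite[Theorem 3.2.8]{Salogni}. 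You have merely spelled out the elementary bound $(-\log r)^{-1}\lesssim(1-r^2)^{-1}$ and the $L^2$-boundedness, which the paper leaves implicit.
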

\begin{proof}
Again by Lemma~\ref{lemmalocal}
\[\abs{K_{(\As+\lambda I)^{iu}_\loc}(x,y)}\lesssim K_{2,0}(x,y)\lesssim |x-y|^{-n}\]
and
\begin{align*}
\abs{\nabla_{x} K_{(\As+\lambda I)^{iu}_\loc}(x,y)} + \abs{\nabla_{y} K_{(\As+\lambda I)^{iu}_\loc}(x,y)}&\lesssim K_{4,1}(x,y) + K_{2,0}(x,y)|x-y|^{-1}\\& \lesssim |x-y|^{-(n+1)}
\end{align*}
for every $(x,y)\in N_2$ such that $x\neq y$, $\lambda\geq0$. The conclusion follows again by~\cite[Theorem 3.2.8]{Salogni}.
\end{proof}
Before treating the global part, observe that by~\eqref{impowLeb}
\[
|K_{(\As+\lambda I)^{iu}}(x,y)|\lesssim \int_0^\infty \frac{e^{-(n+\lambda)t}}{(1-e^{-2t})^{(n+2)/2}} e^{-\frac{\abs{x-e^{-t}y}^2}{1-e^{-2t}}}\, \dd t \coloneqq K_\lambda'(x,y).
\]
\begin{proposition}\label{impowglobal}
Let $\lambda \geq 0$. Then, for every $(x,y)\in G$
\begin{equation}
K_\lambda'(x,y)\leq C\bar{K}(x,y).
\end{equation}
In particular, $(\As+\lambda I)^{iu}_\glob$ is of weak type $(1,1)$ for every $u\in \R\setminus \{0\}$ and $\lambda \geq 0$.
\end{proposition}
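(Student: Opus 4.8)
The plan is to follow, step for step, the argument used for the Riesz transforms in Proposition~\ref{rieszglobal}, the present task being slightly easier. Since $e^{-\lambda t}\le 1$ for $\lambda,t\ge 0$, we have $K'_\lambda\le K'_0$ pointwise, so it suffices to bound $K'_0$. Apply the rescaling $t=\tau(s)$ of~\eqref{taus} (so $e^{-t}=\tfrac{1-s}{1+s}$, $1-e^{-2t}=\tfrac{4s}{(1+s)^2}$, $\dd t=\tfrac{2}{1-s^2}\,\dd s$) and discard the harmless factor $1+s\le 2$, to get
\[
K'_0(x,y)\lesssim \int_0^1 \frac{(1-s)^{n-1}}{s^{(n+2)/2}}\, e^{-\frac{|(1+s)x-(1-s)y|^2}{4s}}\,\dd s .
\]
Expanding $|(1+s)x-(1-s)y|^2=|x-y|^2+2s(|x|^2-|y|^2)+s^2|x+y|^2$ and using $\alpha=|x-y||x+y|$, $\beta=\tfrac{|x-y|}{|x+y|}$, $\varphi(\sigma)=\sigma+\tfrac1\sigma-2$ splits the exponential as $e^{-|x|^2+|y|^2}\,\eta(x,y)\,e^{-\frac14\alpha\varphi(s/\beta)}$, exactly as in the computation preceding~\eqref{supPhi}; the substitution $\sigma=s/\beta$ then gives
\[
K'_0(x,y)\lesssim e^{-|x|^2+|y|^2}\Big(\tfrac{|x+y|}{|x-y|}\Big)^{n/2}\eta(x,y)\int_0^{1/\beta}\frac{(1-\sigma\beta)^{n-1}}{\sigma^{(n+2)/2}}\,e^{-\frac14\alpha\varphi(\sigma)}\,\dd\sigma ,
\]
so the Proposition reduces to the scalar bound $\int_0^{1/\beta}\frac{(1-\sigma\beta)^{n-1}}{\sigma^{(n+2)/2}}\,e^{-\frac14\alpha\varphi(\sigma)}\,\dd\sigma\lesssim\Phi(x,y)$ for every $(x,y)\in G$.

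For this I would use the splitting device employed after~\eqref{sqrtalpha}: write the integrand as $\big(\tfrac{(1-\sigma\beta)^n}{\sigma^{n/2}}e^{-\frac14\alpha\varphi(\sigma)}\big)^{(n-1)/n}\cdot\sigma^{-3/2}e^{-\frac{1}{4n}\alpha\varphi(\sigma)}$, bound the first factor by $\big(\sup_{0<\sigma<1/\beta}\cdots\big)^{(n-1)/n}\lesssim\Phi(x,y)^{(n-1)/n}$ via~\eqref{supPhi}, and pull it out of the integral; it then remains to show $\int_0^{1/\beta}\sigma^{-3/2}e^{-\frac{1}{4n}\alpha\varphi(\sigma)}\,\dd\sigma\lesssim\alpha^{-1/2}$. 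This follows from the change of variables $t=\alpha\varphi(\sigma)$, splitting the range at $\sigma=1$ into the branches $\sigma_\pm(t)$ of~\eqref{sigmameno}--\eqref{sigmapiu} as in Proposition~\ref{rieszglobal}: since $\sqrt{\varphi(\sigma)}=|1-\sigma|/\sqrt\sigma$, the Jacobian telescopes to $\sigma^{-3/2}\,\dd\sigma=\tfrac{\dd t}{(1+\sigma)\sqrt{\alpha t}}\le\tfrac{\dd t}{\sqrt{\alpha t}}$, so each piece is at most $\alpha^{-1/2}\int_0^\infty t^{-1/2}e^{-t/4n}\,\dd t=C\alpha^{-1/2}$. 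Multiplying the two bounds and using $\Phi(x,y)\ge\alpha^{-n/2}$, hence $\alpha^{-1/2}\le\Phi(x,y)^{1/n}$, gives $\lesssim\Phi(x,y)$, i.e. $K'_0(x,y)\le C\bar K(x,y)$ on $G$ (for $n=1$ the first factor is trivial and one estimates $\int_0^{1/\beta}\sigma^{-3/2}e^{-\frac14\alpha\varphi(\sigma)}\,\dd\sigma\lesssim\alpha^{-1/2}=\Phi(x,y)$ directly).

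Finally, since $1-\chi$ is supported in $N_1^c=G$ and bounded by $1$, on $G$ we have $|K_{(\As+\lambda I)^{iu}_{\glob}}(x,y)|\le|K_{(\As+\lambda I)^{iu}}(x,y)|\lesssim K'_\lambda(x,y)\le C\bar K(x,y)$ — the middle step by the estimate $|K_{(\As+\lambda I)^{iu}}|\lesssim K'_\lambda$ noted just before the statement, the last by what was just proved — while the global kernel vanishes off $G$; hence $|(\As+\lambda I)^{iu}_{\glob}f|\le C\,\bar{\mathcal K}(|f|)$ pointwise, and the weak type $(1,1)$ of $(\As+\lambda I)^{iu}_{\glob}$ follows from Lemma~\ref{barK}. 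Together with Proposition~\ref{impowloc} and $(\As+\lambda I)^{iu}=(\As+\lambda I)^{iu}_{\loc}+(\As+\lambda I)^{iu}_{\glob}$, this completes Theorem~\ref{teo:weaktype}, (i). The one point requiring care — the main obstacle — is the scalar estimate in the second paragraph: it must hold \emph{uniformly over $(x,y)\in G$ with no a priori lower bound on $\alpha$}, unlike the corresponding step for $\Rs_\lambda$, where $\alpha\ge\tfrac14$ (Lemma~\ref{lemmaprel}, (1)) was invoked when $\beta<1$; this forbids replacing the exact telescoping $\sigma^{-3/2}\,\dd\sigma=\tfrac{\dd t}{(1+\sigma)\sqrt{\alpha t}}$ by a crude bound such as $1-\sigma_-(t)\ge c$, and it is precisely this uniformity that makes the present argument marginally shorter than the one for the Riesz transforms.
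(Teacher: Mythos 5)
Your argument is correct and follows the same overall route as the paper's proof: change of variables $t=\tau(s)$, then $s/\beta=\sigma$; extraction of the factor $\Phi(x,y)^{(n-1)/n}$ via~\eqref{supPhi}; reduction to showing $\int_0^{1/\beta}\sigma^{-3/2}e^{-\frac{1}{4n}\alpha\varphi(\sigma)}\,\dd\sigma\lesssim\alpha^{-1/2}\leq\Phi(x,y)^{1/n}$; split at $\sigma=1$ and substitute $t=\alpha\varphi(\sigma)$. Your initial reduction $K'_\lambda\leq K'_0$ is also what the paper does, just made implicit there by bounding $(1-\sigma\beta)^{n+\lambda-1}\leq(1-\sigma\beta)^{n-1}$.

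The one genuine point of departure is the final scalar estimate. The paper writes the post-substitution integrand as $\frac{\sqrt{\sigma_\mp(t)}}{|1\mp\sigma_\mp(t)|}=\sqrt{\alpha/t}\,\sqrt{g_\mp(\alpha/t)}$ with $g_\mp(z)=2\,\frac{2z+1\mp\sqrt{1+4z}}{(\sqrt{1+4z}\mp1)^2}$, and then argues that $g_\mp$ are bounded on $[0,\infty)$. Your observation that $\sqrt{\varphi(\sigma)}=|1-\sigma|/\sqrt{\sigma}$, so the Jacobian telescopes exactly to $\sigma^{-3/2}\,\dd\sigma=\dd t/\big((1+\sigma)\sqrt{\alpha t}\big)$, is a cleaner way to see the same thing: indeed $(\sqrt{1+4z}\mp1)^2=2(2z+1\mp\sqrt{1+4z})$, so $g_-\equiv g_+\equiv 1$, and the boundedness check is vacuous. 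Your closing remark about uniformity in $\alpha$ (there is no a priori lower bound on $\alpha$ in $G$ when $\beta\geq1$) is accurate, and your exact telescoping is precisely what delivers it — in contrast with the corresponding step in Proposition~\ref{rieszglobal}, case $\beta<1$, where the paper explicitly invokes $\alpha\geq\tfrac14$ from Lemma~\ref{lemmaprel}, (1) to bound $1-\sigma_-(t)$ from below.
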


\begin{proof}
With the changes of variables $t=\tau(s)$ first, and then $s/\beta =\sigma$ in the integral defining $K'_\lambda$, we get as before
\begin{align*}
K'(x,y) \leq C e^{-|x|^2+|y|^2} \bigg(\frac{|x+y|}{|x-y|}\bigg)^{n/2} \eta(x,y) \int_0^{1/\beta}\frac{(1-\sigma \beta)^{n-1}}{\sigma^{n/2+1}}e^{-\frac{1}{4}\alpha \varphi(\sigma)}\, \dd \sigma
\end{align*}
and also
\[
\int_0^{1/\beta}\frac{(1-\sigma \beta)^{n-1}}{\sigma^{n/2+1}}e^{-\frac{1}{4}\alpha \varphi(\sigma)}\, \dd \sigma\leq C \Phi(x,y)^{1-\frac{1}{n}} \int_0^{1/\beta} \frac{1}{\sigma^{3/2}} e^{-\frac{1}{4n} \alpha\varphi(\sigma)}\, \dd \sigma,
\]
hence we only need to prove that
\[
\int_0^{1/\beta} \frac{1}{\sigma^{3/2}} e^{-\frac{1}{4n} \alpha\varphi(\sigma)}\, \dd \sigma\leq C \Phi(x,y)^{1/n}.
\] 
Observe first that 
\[
\int_0^{1/\beta} \frac{1}{\sigma^{3/2}} e^{-\frac{1}{4n} \alpha\varphi(\sigma)}\, \dd \sigma =\int_0^{\min(1,1/\beta)}\dots \, \dd \sigma + \int_{\min(1,1/\beta)}^{1/\beta}\dots\, \dd \sigma,
\] 
where the second integral is identically zero if $\beta\geq 1$. In both the integrals the function $\varphi$ is invertible, so that by the change of variables $\alpha \varphi(\sigma)=t$ we get as before
\[\int_0^{\min(1,1/\beta)} \frac{1}{\sigma^{3/2}} e^{-\frac{1}{4n} \alpha\varphi(\sigma)}\, \dd \sigma \leq \frac{C}{\alpha} \int_0^{\infty}\frac{\sqrt{\sigma_-(t)}}{1-\sigma_-(t)} e^{-\frac{t}{4n}}\, \dd t \]
while
\[\int_{\min(1,1/\beta)}^{1/\beta} \frac{1}{\sigma^{3/2}} e^{-\frac{1}{4n} \alpha\varphi(\sigma)}\, \dd \sigma \leq \frac{C}{\alpha} \int_0^{\infty}\frac{\sqrt{\sigma_+(t)}}{\sigma_+(t)-1} e^{-\frac{t}{4n}}\, \dd t,\]
where $\sigma_-(t)$ and $\sigma_+(t)$ are as in~\eqref{sigmameno} and~\eqref{sigmapiu}. Now observe that
\[
\frac{\sqrt{\sigma_\mp(t)}}{\pm1\mp\sigma_\mp(t)} = \frac{\sqrt{\alpha}}{\sqrt{t}}\sqrt{g_\mp(\alpha/t)}, \qquad g_\mp(z)=2 \frac{2z+1\mp\sqrt{1+4z}}{(\sqrt{1+4z}\mp 1)^2}.
\]
Since by elementary analysis both $g_-$ and $g_+$ are bounded on $[0,\infty)$, we obtain
\[
\int_0^{1/\beta} \frac{1}{\sigma^{3/2}} e^{-\frac{1}{4n} \alpha\varphi(\sigma)}\, \dd \sigma\leq \frac{C}{\sqrt{\alpha}} \int_0^\infty \frac{1}{\sqrt{t}}e^{-\frac{t}{4n}}\, \dd t = \frac{C}{\sqrt{\alpha}}\leq C \Phi(x,y)^{1/n},
\] 
and the proof is then complete.
\end{proof}

\section{Boundedness from $H^1(\gamma_{-1})$ to $L^1(\gamma_{-1})$}\label{BoundednessH1OUrovesciato}
In this section, we characterize the boundedness of $(\As +\lambda I)^{iu}$ and $\Rs_\lambda$ from the Hardy space $H^1(\gamma_{-1})$ to $L^1(\gamma_{-1})$, in terms of $\lambda\geq 0$. The main results are Theorems~\ref{ImpowH1} and~\ref{RieszH1} below, and their proof will occupy the remainder of this section.

\begin{theorem}\label{ImpowH1}
Let $\lambda\geq 0$ and $u\neq 0$. Then $(\As+\lambda I)^{iu}$ is bounded from $H^1(\gamma_{-1})$ to $L^1(\gamma_{-1})$ if and only if $\lambda>0$.
\end{theorem}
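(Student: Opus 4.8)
The statement is an ``iff'', and the two implications are of very different character. The easy one is the ``only if'': when $\lambda=0$ a single bad atom already destroys boundedness. The plan here is to take $B=B(0,1)$, which is admissible at scale $1$, and invoke Lemma~\ref{Psi}~(3) with $\lambda=0$ to produce $\psi\in C^\infty_c(B)$ with $\int_B\psi\,\dd\gamma_{-1}=0$ and $(\psi,\Psi_{0,\sigma_0})_{L^2(B,\gamma_{-1})}\neq0$ for some $\sigma_0\in S^{n-1}$. Rescaling $\psi$ by a small constant (which changes neither the moment condition nor the nonvanishing of the pairing) turns it into an $H^1$-atom, so $\psi\in H^1(\gamma_{-1})$; then Lemma~\ref{lemmaunbounded0} gives $\As^{iu}\psi\notin L^1(\gamma_{-1})$, hence $\As^{iu}$ is unbounded from $H^1(\gamma_{-1})$ to $L^1(\gamma_{-1})$.

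For the ``if'' direction, fix $\lambda>0$. By Theorem~\ref{teo:weaktype}~(i), $(\As+\lambda I)^{iu}$ is of weak type $(1,1)$, so by the classical principle that a weak type $(1,1)$ operator which is \emph{uniformly bounded on atoms} extends boundedly from the atomic space to $L^1$ (cf.\ \cite[p.\ 95]{Grafakos}), it suffices to show $\sup\{\|(\As+\lambda I)^{iu}a\|_{L^1(\gamma_{-1})}\colon a\text{ an }H^1\text{-atom}\}<\infty$. Let $a$ be supported in an admissible ball $B$ and split $\R^n=4B\cup(4B)^c$. On $4B$: since the spectral multiplier $(k+\lambda)^{iu}$ is unimodular, $(\As+\lambda I)^{iu}$ is unitary on $L^2(\gamma_{-1})$, so by Cauchy--Schwarz and Lemma~\ref{adm} (local doubling, comparing $\gamma_{-1}(4B)$ and $\gamma_{-1}(B)$) one gets $\int_{4B}|(\As+\lambda I)^{iu}a|\,\dd\gamma_{-1}\le\gamma_{-1}(4B)^{1/2}\|a\|_{L^2(\gamma_{-1})}\lesssim(\gamma_{-1}(4B)/\gamma_{-1}(B))^{1/2}\lesssim1$.

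On $(4B)^c$ one uses the moment condition $\int a\,\dd\gamma_{-1}=0$: for $x\notin B$, by the kernel representation \eqref{rulekernel} (valid off the support, by Theorem~\ref{teo:complex_powers} and approximation) one writes $(\As+\lambda I)^{iu}a(x)=\int_B[k_{(\As+\lambda I)^{iu}}(x,y)-k_{(\As+\lambda I)^{iu}}(x,c_B)]a(y)\,\dd\gamma_{-1}(y)$, and since $\|a\|_{L^1(\gamma_{-1})}\le1$ it remains to bound $\int_{(4B)^c}\sup_{y\in B}|k_{(\As+\lambda I)^{iu}}(x,y)-k_{(\As+\lambda I)^{iu}}(x,c_B)|\,\dd\gamma_{-1}(x)$ uniformly over admissible $B$. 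Split the kernel by $\chi$ into its local part (supported in $N_2$) and global part (supported in $G$). For the local part, a mean value estimate together with the Calder\'on--Zygmund bounds underlying Proposition~\ref{impowloc} (Lemma~\ref{lemmalocal}) dominates $\sup_{y\in B}|k_\loc(x,y)-k_\loc(x,c_B)|$ by $r_B\,\gamma_{-1}(c_B)^{-1}|x-c_B|^{-(n+1)}$ on the set where it is nonzero — where moreover $|x-c_B|\lesssim1$ and $\gamma_{-1}(x)\approx\gamma_{-1}(c_B)$ (both forced by $(x,y)\in N_2$, $y\in B$) — so the contribution is $\lesssim r_B\int_{4r_B\le|x-c_B|\lesssim1}|x-c_B|^{-(n+1)}\,\dd x\lesssim1$.

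The global part is where the hypothesis $\lambda>0$ must be used, and I expect it to be the main obstacle: the bound $K'_\lambda\le\bar K$ behind Proposition~\ref{impowglobal} only yields the weak type estimate of $\bar{\mathcal K}$ (Lemma~\ref{barK}), which is not enough for an $L^1$ bound — consistently with the fact that $\As^{iu}$ ($\lambda=0$) fails and with the counterexample above. The gain comes from the factor $e^{-(n+\lambda)t}$ in \eqref{impowkernel}: each $y$-derivative of $k_{(\As+\lambda I)^{iu}}(x,y)$ carries $r^{n+\lambda-1}$ (plus the harmless extra $(1-r^2)^{-1/2}$ from differentiating $e^{-|\psi(r,x,y)|^2}$), which after the rescaling $\tau(s)$ and the substitution $s/\beta=\sigma$ of Proposition~\ref{maxglobal} becomes an extra factor $(1-\sigma\beta)^{\lambda}$; this strengthens $\Phi(x,y)$ just enough that $\int_G\sup_{y\in B}|k_\glob(x,y)-k_\glob(x,c_B)|\,\dd\gamma_{-1}(x)\lesssim1$. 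The delicate points will be (i) performing the mean value estimate in $y$ while absorbing the resulting $r_B^2/(1-r^2)$-type losses into the Gaussian $e^{-|\psi(r,x,c_B)|^2}$, using $|x-c_B|\ge4r_B$ on $(4B)^c$, and (ii) carrying out the $x$-integration so as to keep track of the $\lambda$-gain and obtain a bound independent of $B$, which may be a large ball near the origin or a tiny ball far away.
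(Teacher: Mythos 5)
Your ``only if'' direction is essentially the paper's: invoke Lemma~\ref{Psi}~(3) to manufacture a function in $C^\infty_c(B(0,1))$ with zero $\gamma_{-1}$-mean that pairs nontrivially with some $\Psi_{0,\sigma_0}$, rescale it into an $H^1$-atom, and apply Lemma~\ref{lemmaunbounded0} (the paper just uses the explicit hemisphere atom~\eqref{atomexpl}). That part is fine.

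For the ``if'' direction your plan departs from the paper structurally, but the part that carries the weight is not done. The paper does \emph{not} invoke weak type $(1,1)$ here at all: it cites \cite[Theorem~8.2 and Remark~8.3]{CMM} and verifies the local H\"ormander condition
\[
\sup_{B\in\mathcal B_1} r_B\sup_{y\in B}\int_{(2B)^c}\abs{\nabla_y k_{(\As+\lambda I)^{iu}}(x,y)}\,\dd\gamma_{-1}(x)<\infty,
\]
splitting only the $r$-integral as $(0,3/4)\cup(3/4,1)$. Your version (weak type plus uniform atom bound plus mean-value estimate) ultimately needs the same kind of kernel estimate, so the two routes are equivalent at the level of the hard computation, but you only sketch that computation and explicitly defer the ``delicate points''. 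That deferred step is precisely where $\lambda>0$ enters, so the proposal as written has a genuine gap. Two specific concerns: first, you attribute the need for $\lambda>0$ to the spatial global region $G$, but in the paper's proof the hypothesis is used in the small-$r$ part $I_1(y,B)$, which after the change of variables $rx-y=v$ reduces to $\int_0^{3/4}\frac{r^{\lambda-1}}{-\log r}\,\dd r$; this converges iff $\lambda>0$, and the divergence at $\lambda=0$ is not localized to $G$ (it survives even after the crude bound $\int_{(2B)^c}e^{-c|rx-y|^2}\,\dd x\le \int_{\R^n}$, independent of the local/global split in $(x,y)$). Second, your global heuristic via $(1-\sigma\beta)^\lambda$ is suggestive but never converted into a bound independent of $B$, which is the entire point; without carrying out an analogue of the paper's $I_1/I_2$ computation (or equivalently, re-running Lemma~\ref{lemmatech} and the argument of~\cite[p.~310]{MM} in your difference-quotient form), the proposal stops short of a proof.
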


\begin{theorem}\label{RieszH1}
Let $\lambda\geq 0$ and $j=1,\dots,n$.
\begin{itemize}
\item[\emph{(i)}] If $n=1$, $(\Rs_\lambda)_j$ is bounded from $H^1(\gamma_{-1})$ to $L^1(\gamma_{-1})$ if and only if $\lambda>1$;
\item[\emph{(ii)}] if $n\geq 2$, $(\Rs_\lambda)_j$ is not bounded from $H^1(\gamma_{-1})$ to $L^1(\gamma_{-1})$.
\end{itemize}
\end{theorem}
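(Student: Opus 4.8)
The proof falls into a positive part ($n=1$, $\lambda>1$) and two negative parts (any $n$ with $\lambda\in[0,1]$; and $n\geq2$ with any $\lambda$), which together exhaust all cases. For the positive direction, recall that by Theorem~\ref{teo:weaktype}(ii) the operator $\Rs_\lambda$ is of weak type $(1,1)$ whenever $\lambda\geq1$; by the classical argument recalled in the introduction, it then suffices to show that $\sup\{\|(\Rs_\lambda)_j a\|_{L^1(\gamma_{-1})}\colon a\ \text{is an }H^1\text{-atom}\}<\infty$ when $n=1$ and $\lambda>1$. Split $(\Rs_\lambda)_j=(\Rs_{\lambda,\loc})_j+(\Rs_{\lambda,\glob})_j$. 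For the local part, Proposition~\ref{proplocalriesz} shows it is a Calderón--Zygmund operator, and since $\gamma_{-1}$ is comparable to a constant on dilates of admissible balls (Lemma~\ref{adm}) the classical $H^1$-to-$L^1$ estimate for Calderón--Zygmund operators applies atom by atom, uniformly — and this step works for every $n\geq1$ and every $\lambda\geq0$. For the global part I would use the cancellation $\int a\,\dd\gamma_{-1}=0$ of an atom supported in $B\in\mathcal{B}_1$ to write
\[
(\Rs_{\lambda,\glob})_j a(x)=\int_B\big[k_{(\Rs_{\lambda,\glob})_j}(x,y)-k_{(\Rs_{\lambda,\glob})_j}(x,c_B)\big]a(y)\,\dd\gamma_{-1}(y),
\]
estimate the bracket by $r_B\sup_{\xi\in B}|\nabla_\xi k_{(\Rs_{\lambda,\glob})_j}(x,\xi)|$ together with $\|a\|_{L^1(\gamma_{-1})}\leq1$, and reduce matters to the integral Hörmander-type inequality
\[
\int_{\R^n}\sup_{\xi\in B}\big|\nabla_\xi k_{(\Rs_{\lambda,\glob})_j}(x,\xi)\big|\,\dd\gamma_{-1}(x)\lesssim r_B^{-1}\qquad\text{uniformly over }B\in\mathcal{B}_1.
\]
This is the technical core: one works on the explicit kernel~\eqref{Rieszkernel}, performs the substitution $e^{-t}=r$, differentiates in $\xi$ (which costs at most a factor $1+|\xi|\approx r_B^{-1}$), and splits the $r$-integral as in Proposition~\ref{rieszglobal}; in dimension $n=1$ the resulting integral near $r=1$ is integrable precisely when $\lambda>1$, which is where the threshold appears.

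For the negative direction when $\lambda\in[0,1]$, I would invoke Lemma~\ref{lemmaunbounded}(i) directly. By Lemma~\ref{Psi}(3) applied with $B=B(0,1)$ there are $\sigma_0\in S^{n-1}$ and $\psi\in C_c^\infty(B(0,1))$ with $\int\psi\,\dd\gamma_{-1}=0$ and $(\psi,\Psi_{\lambda,\sigma_0})_{L^2(B,\gamma_{-1})}\neq0$. A suitable scalar multiple $a=c\,\psi$ meets the size and cancellation conditions of Definition~\ref{atom}, hence is an $H^1$-atom supported in the admissible ball $B(0,1)$, and still $(\Psi_{\lambda,\sigma_0},a)_{L^2(B,\gamma_{-1})}\neq0$; Lemma~\ref{lemmaunbounded}(i) then yields $(\Rs_\lambda)_j a\notin L^1(\gamma_{-1})$. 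This settles the ``only if'' part of (i) for $n=1$, $\lambda\leq1$, and part (ii) for $\lambda\leq1$.

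The remaining and hardest case is $n\geq2$ with $\lambda>1$, which is not reachable through $\Psi_{\lambda,\sigma}$: when $\lambda>1$ the leading term $\rho^{-\lambda}(\log\rho)^{-1/2}$ in the asymptotics underlying Lemma~\ref{lemmaunbounded} is integrable at infinity, so atoms near the origin map into $L^1(\gamma_{-1})$. The plan is to exploit the tangential degrees of freedom available only for $n\geq2$: fix $j$, pick $k\neq j$, and for $R$ large take an admissible ball $B_R=B(Re_k,c/R)$ together with an $H^1$-atom $a_R$ supported in $B_R$, antisymmetric in the $y_j$-variable about $(c_{B_R})_j$ (so that $\int a_R\,\dd\gamma_{-1}=0$ automatically), whose tangential moment $m_R:=\int_{B_R}(y_j-(c_{B_R})_j)\,a_R(y)\,\dd\gamma_{-1}(y)$ satisfies $|m_R|\gtrsim1$ after normalization. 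One then estimates $(\Rs_\lambda)_j a_R(x)$ for $x$ in a far region (conveniently the antipodal cap around $-R'e_k$ with $R'\gtrsim R$) by expanding the kernel~\eqref{Rieszkernel} to first order in $y-c_{B_R}$: the zeroth-order term drops out by cancellation and the first-order term is governed by $m_R$. Carrying out the $r$-integral with the substitutions used in Propositions~\ref{rieszglobal} and~\ref{maxglobal}, one finds that $\|(\Rs_\lambda)_j a_R\|_{L^1(\gamma_{-1})}$ is unbounded as $R\to\infty$ (growing like a positive power of $\log R$), because a tangential moment is not damped by the Gaussian weight the way a radial one is. Hence $\Rs_\lambda$ is not uniformly bounded on $H^1$-atoms, so \emph{a fortiori} not bounded from $H^1(\gamma_{-1})$ to $L^1(\gamma_{-1})$, for every $\lambda\geq0$.

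The two cases with $\lambda\leq1$ are essentially immediate once Lemmata~\ref{Psi} and~\ref{lemmaunbounded} are available. The real work is concentrated in (a) the uniform integral Hörmander estimate for $(\Rs_{\lambda,\glob})_j$ in the positive direction — a delicate but routine analysis of the explicit kernel, where the role of $\lambda=1$ and of $n=1$ surfaces — and (b) the construction and kernel analysis of the far, tangentially oscillating atoms for $n\geq2$, $\lambda>1$: choosing the atom correctly, controlling uniformly in $R$ the error terms in the Taylor expansion of the kernel, and showing the main term genuinely diverges. I expect (b) to be the main obstacle, since it is precisely the point where the dimension enters and where the $\Psi_{\lambda,\sigma}$-orthogonality obstruction of Lemma~\ref{lemmaunbounded} is no longer at our disposal.
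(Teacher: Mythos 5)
Your positive direction ($n=1$, $\lambda>1$) and your negative direction for $\lambda\le1$ are essentially the paper's, but with two points worth correcting. The paper does not factor through weak type $(1,1)$ and uniform atom-boundedness: it appeals directly to the criterion of~\cite[Theorem 8.2 and Remark 8.3]{CMM} and verifies the local Hörmander-type condition
\[
\sup_{B\in\mathcal{B}_1}r_B\sup_{y\in B}\int_{(2B)^c}\abs{\partial_y k_{\Rs_\lambda}(x,y)}\,\gamma_{-1}(x)\,\dd x<\infty.
\]
Your route is a valid alternative, but the technical core is the same and you have not identified it: the estimate closes only after an integration by parts in $r$, using the scalar identities $-\phi=(1-r^2)\,\partial_r\psi$ and $2\psi e^{-\psi^2}\partial_r\psi=-\partial_r e^{-\psi^2}$ to cancel the worst term of $\partial_y k_{\Rs_\lambda}$. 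These identities have no componentwise analogue for $n\ge2$, and that is precisely why the positive result is confined to $n=1$. Moreover, after the cancellation the threshold comes from $\int_0^{3/4}r^{\lambda-2}(-\log r)^{-1/2}\,\dd r$, which is finite iff $\lambda>1$; the $r\to1$ end only yields $Cr_B^{-1}$ for every $\lambda\geq0$. Your claim that the constraint appears near $r=1$ is wrong: it appears near $r=0$, i.e.\ at large time.

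For $n\ge2$, $\lambda>1$, your construction differs in two crucial choices and I do not see how it can work. The paper takes cubes $Q(\xi)$ of sidelength $2/\xi$ centred at $\xi e_1$, atoms antisymmetric in the \emph{tangential} variable $y_2$, applies the \emph{radial} component $(\Rs_\lambda)_1$, and integrates over a forward region $A_Q$ with $x_1\in(\xi+4/\xi,\,\xi+1)$, just beyond the cube. The argument needs $x_1-ry_1>0$ on this region, the nonnegativity of the factor $1-e^{-4rx_2y_2/(1-r^2)}$ produced by the $y_2$-antisymmetry, and the concentration of the Mehler kernel along the ray through $c_Q$. You propose instead to apply the \emph{tangential} component $(\Rs_\lambda)_j$ to an atom antisymmetric in $y_j$, and to integrate over an \emph{antipodal} cap around $-R'e_k$. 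On the antipodal side $e^{-|x-e^{-t}y|^2/(1-e^{-2t})}$ is exponentially suppressed for every $t>0$ (the Mehler kernel peaks near $x\approx e^{-t}y$, on the \emph{same} side of the origin as $y$), so that region contributes nothing like a logarithmic divergence; and with $x_j,y_j$ both near $0$ the factor $x_j-ry_j$ has no controlled sign. You correctly flag this case as the main obstacle, but the configuration you sketch is the wrong one; the paper's forward region and radial Riesz component are not incidental conveniences but the essential choices that make the computation close.
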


\subsection{Proof of Theorem~\ref{ImpowH1}}
We first need a preliminary lemma, inspired by~\cite[Lemma 7.1]{MM}.

\begin{lemma}\label{lemmatech}
For every ball $B$ and $y\in B$, let
\[r_{B,y}=r_B/(4|y|).\]
\begin{itemize}
\item[\emph{1.}] If $r_{B,y}\geq 1$, then $\abs{rx-y}\geq |x-c_B|/8$ for every $r\in (3/4,1)$ and $x\in (2B)^c$;
\item[\emph{2.}] If $r_{B,y}<1$, then $\abs{rx-y}\geq |x-c_B|/8$ for every $r\in (1- r_{B,y},1)$ and $x\in (2B)^c$.
\item[\emph{3.}] For every $c>0$, there exists $C>0$ such that
\[(1-r^2)^{-n/2}\int_{(2B)^c} e^{-c\frac{|x-c_B|^2}{1-r^2}}\, \dd x \leq C \varphi(r_B/\sqrt{1-r^2})\]
for every $r\in [0,1]$ and $B\in \mathcal{B}_1$, where $\varphi(s)=(1+s)^{n-2} e^{-s^2}$.
\end{itemize}
\end{lemma}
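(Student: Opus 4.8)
The three parts are essentially independent: (1) and (2) are elementary consequences of the triangle inequality, while (3) is a Gaussian tail estimate obtained after rescaling. For (1), I would first observe that $r_{B,y}\ge1$ means $|y|\le r_B/4$, and since $y\in B$ this forces $|c_B|\le|y|+r_B\le\tfrac54 r_B$, so the ball is close to the origin on its own scale. Then for $x\in(2B)^c$ one has $|x-c_B|\ge 2r_B$, hence both $|x|\ge|x-c_B|-|c_B|\ge\tfrac38|x-c_B|$ and $|x|\ge 2r_B-\tfrac54 r_B=\tfrac34 r_B>|y|$, so that $|rx-y|\ge r|x|-|y|\ge\tfrac34\cdot\tfrac38|x-c_B|-\tfrac14 r_B\ge\tfrac18|x-c_B|$, the last step using $r_B\le\tfrac12|x-c_B|$.

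For (2), the point is that $r>1-r_{B,y}$ is exactly the inequality $(1-r)|y|<r_B/4$. Writing $rx-y=r(x-y)-(1-r)y$, one gets $|rx-y|\ge r|x-y|-(1-r)|y|\ge r|x-y|-r_B/4$; on $(2B)^c$ one has $|x-y|\ge|x-c_B|-|y-c_B|\ge|x-c_B|-r_B\ge\tfrac12|x-c_B|$ and $r_B/4\le\tfrac18|x-c_B|$, so $|rx-y|\ge(\tfrac r2-\tfrac18)|x-c_B|$, which already yields the claim whenever $r\ge\tfrac34$. For the remaining values of $r$ one necessarily has $r_{B,y}>\tfrac14$, i.e.\ $|y|<r_B$ and $|c_B|<2r_B$, so the ball again sits near the origin; here one argues more carefully, distinguishing whether $rx$ is roughly aligned with $y$ and using $|x|\gtrsim r_B\gtrsim|y|$ together with $|x-c_B|\ge 2r_B$ and the fact that $(1-r)|y|<r_B/4$ still forces $r$ close to $1$ when $|y|$ is comparable to $r_B$. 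Producing a single admissible constant valid across the whole range of $r$ is the subtlest point, and I expect this case analysis to be the main technical obstacle of the lemma.

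For (3), substitute $x=c_B+\sqrt{1-r^2}\,u$ to rewrite the left-hand side as $\int_{\{|u|>R\}}e^{-c|u|^2}\,\dd u$, with $R\coloneqq 2r_B/\sqrt{1-r^2}$, while the right-hand side equals $C\bigl(1+\tfrac R2\bigr)^{n-2}e^{-R^2/4}$. When $R\lesssim1$ both sides are comparable to a positive constant, so the inequality holds once $C$ is large enough. When $R$ is large, pass to polar coordinates and integrate by parts once via $\rho^{n-1}e^{-c\rho^2}=-\tfrac1{2c}\rho^{n-2}\tfrac{\dd}{\dd\rho}e^{-c\rho^2}$ (iterating the argument for $n\ge3$, and checking directly for $n=1,2$), which gives $\int_R^\infty\rho^{n-1}e^{-c\rho^2}\,\dd\rho\lesssim_{c,n}R^{n-2}e^{-cR^2}$; since $R^{n-2}$ and $(1+R)^{n-2}$ are comparable for $R\gtrsim1$, and $e^{-cR^2}\le e^{-R^2/4}$ for $c\ge\tfrac14$ (the range of $c$ relevant in the applications), the desired bound follows with $\varphi(s)=(1+s)^{n-2}e^{-s^2}$.
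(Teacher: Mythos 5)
Your proof of items (1) and (2) is correct on the range $r\in(3/4,1)$, via a different triangle-inequality decomposition than the paper's. The paper handles both items with the single split $rx-y=r(x-c_B)-r(y-c_B)-(1-r)y$, which gives $|rx-y|\ge r|x-c_B|-r|y-c_B|-(1-r)|y|\ge\frac34|x-c_B|-r_B-(1-r)|y|$, and then bounds $(1-r)|y|$ by $r_B/16$ in case (1) and by $r_B/4$ in case (2). Your alternative — $|rx-y|\ge r|x|-|y|$ in case (1), $|rx-y|\ge r|x-y|-(1-r)|y|$ in case (2) — reaches the same conclusion with the same effort, and in case (2) your computation $(\frac r2-\frac18)\ge\frac18$ already works for $r\ge\frac12$, which more than covers what is needed.

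You should, however, drop the attempted extension of (2) to $r\le 3/4$: the inequality is actually false there. For example, with $n=1$, $c_B=1/2$, $r_B=1$, $y=-13/50\in B$, $r=1/10$ and $x=y/r=-13/5\in(2B)^c$, one has $r_{B,y}=25/26<1$ and $r>1-r_{B,y}=1/26$, yet $rx-y=0$. In the paper the Lemma is only ever invoked with $r$ in $(\max(3/4,\,1-r_{B,y}),1)$ — see the split of the $r$-interval in the proof of Theorem~\ref{ImpowH1}, and likewise in Lemma~\ref{Lemma4B} — and the paper's own proof of item (2) also silently assumes $r\ge3/4$ when it writes $r|x-c_B|\ge\frac34|x-c_B|$. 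So the hypothesis of (2) should be read as $r\in(\max(3/4,\,1-r_{B,y}),1)$; your argument then covers it completely, and the ``main technical obstacle'' you were anticipating does not exist.

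For (3) the paper gives no argument and simply cites \cite[Lemma 7.1, (iii)]{MM}; your rescaling-plus-Gaussian-tail computation is the natural proof and is correct. Your caveat about the constant $c$ is also well taken: with $\varphi(s)=(1+s)^{n-2}e^{-s^2}$ fixed, the estimate can only hold for $c\ge1/4$, since for $c<1/4$ and $R=2r_B/\sqrt{1-r^2}$ large the left-hand side is of order $R^{n-2}e^{-cR^2}$, which outgrows $(1+R/2)^{n-2}e^{-R^2/4}$. Note, though, that $c\ge1/4$ is not ``the range of $c$ relevant in the applications'': in the proof of Theorem~\ref{ImpowH1} the Lemma is applied with $c=1/128$ (coming from $e^{-|rx-y|^2/(2(1-r^2))}$ together with $|rx-y|\ge|x-c_B|/8$). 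The statement should therefore carry a $c$-dependent exponent, say $\varphi(s)=(1+s)^{n-2}e^{-c's^2}$ for some $c'=c'(c)>0$; this is harmless downstream, since all that is used after a change of variables is the finiteness of $\int_0^\infty s\,\varphi(s)\,\dd s$.
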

\begin{proof}
We begin by 1. If $r\in (3/4,1)$, $x\in (2B)^c$ and since $|y|\leq r_B/4$ by assumption 
\begin{align*}
|rx-y| &\geq r|x-c_B| - r|c_B-y|-(1-r)|y|
\\& \geq \frac{3}{4}|x-c_B| -r_B -\frac{1}{16}r_B \geq \frac{1}{8}|x-c_B| + \bigg[\frac{5}{8}|x-c_B| -\frac{17}{16}r_B\bigg]\geq \frac{1}{8}|x-c_B| 
\end{align*}
since
\[\frac{5}{8}|x-c_B| -\frac{17}{16} r_B \geq \frac{5}{4}r_B -\frac{17}{16}r_B\geq 0. \]
The point 2.\ can be proved analogously, since by assumption $(1-r)\leq r_B/(4|y|)$ and then
\begin{align*}
|rx-y| &\geq r|x-c_B| - r|c_B-y|-(1-r)|y|\geq \frac{3}{4}|x-c_B| -r_B -\frac{1}{4}r_B \geq \frac{1}{8}|x-c_B|
\end{align*}
as above. The point 3.\ is~\cite[Lemma 7.1, (iii)]{MM}.
\end{proof}

\begin{proof}[Proof of Theorem~\ref{ImpowH1}]
We begin by proving that $\As^{iu}$ is unbounded from $H^1(\gamma_{-1})$ to $L^1(\gamma_{-1})$. Let
\[E_+\coloneqq B(0,1) \cap \{ y_n\geq 0\}, \qquad E_-\coloneqq B(0,1) \cap \{ y_n<0\}\]
be the upper and lower hemispheres or radius 1, respectively, and consider the atom
\begin{equation}\label{atomexpl}
a(y) = \gamma_{-1}(B)^{-1}(\chi_{E_+}(y) - \chi_{E_-}(y)).
\end{equation}
If $e_n=(0,\dots,0,1)$
\begin{equation}\label{aPsien}
(a,\Psi_{0,e_n}) =\int_0^\infty e^{-t}t^{(n-2)/2} \bigg(\int_{E_+}\bigg[ e^{2(e_n,y) \sqrt{t}} - e^{-2(e_n,y) \sqrt{t}}\bigg]\, \dd y \bigg) \, \dd t>0
\end{equation}
since the function $s\mapsto e^{s}-e^{-s}$ is positive for every $s>0$. Thus, $\As^{iu} a \notin L^1(\gamma_{-1})$ by Lemma~\ref{lemmaunbounded0}.

Let now $\lambda>0$. We prove that $(\As +\lambda I)^{iu}$ is bounded $H^1(\gamma_{-1}) \to L^1(\gamma_{-1})$ by verifying the local Hormander-type condition (cf.~\cite[Theorem 8.2 and Remark 8.3]{CMM})
\begin{equation}\label{localHormImg}
\sup_{B\in \mathcal{B}_1} r_B \sup_{y\in B} \int_{(2B)^c}\abs{\nabla_y k_{(\As+\lambda I)^{iu}}(x,y)}\gamma_{-1}(x)\,\dd x<\infty.
\end{equation}
Up to a constant factor, by~\eqref{impowkernel} the integral in~\eqref{localHormImg} is
\begin{align}\label{inf}
I(y,B)&=\int_{(2B)^c} \bigg|\int_0^1 \frac{r^{n+\lambda-1}(-\log r)^{-iu-1}}{(1-r^2)^{n/2}} \nabla_y e^{-|\psi(r,x,y)|^2} \,\dd r\bigg|\, \dd x \nonumber \\&\leq \int_0^1 \frac{r^{n+\lambda-1}(-\log r)^{-1}}{(1-r^2)^{(n+1)/2}} \int_{(2B)^c} 2\frac{\abs*{rx-y}}{\sqrt{1-r^2}} e^{-\frac{|rx-y|^2}{1-r^2}}\, \dd x \, \dd r.
\end{align}
By using the inequality $s \exp(-s^2)\leq c \exp(-s^2/2)$ for every $s\geq 0$,
\begin{align*}
I(y,B) &\lesssim \int_0^1 \frac{r^{n+\lambda-1}(-\log r)^{-1}}{(1-r^2)^{(n+1)/2}} \int_{(2B)^c} e^{-\frac{|rx-y|^2}{2(1-r^2)}}\, \dd x \, \dd r
\\& = \bigg(\int_0^{\frac{3}{4}} + \int_{\frac{3}{4}}^1\bigg)\frac{r^{n+\lambda-1}(-\log r)^{-1}}{(1-r^2)^{(n+1)/2}} \int_{(2B)^c} e^{-\frac{|rx-y|^2}{2(1-r^2)}}\, \dd x \, \dd r =I_1(y,B) + I_2(y,B).
\end{align*}
As for $I_1(y,B)$,
\begin{align*}
I_1(y,B) &\leq C \int_0^{3/4} \frac{r^{n+\lambda-1}}{(-\log r)} \int_{(2B)^c} e^{-c |rx-y|^2}\, \dd x \, \dd r
\\& \leq C \int_0^{3/4} \frac{r^{n+\lambda-1}}{(-\log r)} \frac{1}{r^n}\int_{\R^n} e^{-|v|^2}\, \dd v \, \dd r =C \int_0^{3/4} \frac{r^{\lambda-1}}{(-\log r)}\leq C
\end{align*}
for every $y\in B$ and every $\lambda >0$.

As for $I_2(y,B)$, observe that there exists $C>0$ such that
\[\frac{1}{(1-r^2)^{(n+1)/2}(- \log r)}\leq \frac{C}{(1-r^2)^{(n+3)/2}}\]
for every $r\in (3/4,1)$. By means of Lemma~\ref{lemmatech} we can then argue exactly as in~\cite[p.\ 310]{MM}. Thus, we get
\[ 
I_2(y,B)\leq \frac{C}{r_B}\qquad \mbox{if $r_{B,y}\geq 1$}.
\]
If $r_{B,y}<1$, we split
\[(3/4,1) = (3/4, 1-r_{B,y}) \cup (1-r_{B,y},1)\]
meaning that $(3/4, 1-r_{B,y})$ is the empty set if $3/4>1-r_{B,y}$, and thus one of the integrals below is identically zero. We split accordingly
\[
I_2(y,B)= I_2^1(y,B) + I_2^2(y,B)
\]
and again as~\cite[p.\ 310]{MM}, we get
\[ 
I_2^j(y,B)\leq \frac{C}{r_B}\qquad \mbox{if $r_{B,y}<1$, for $j=1,2$}.
\]
Now~\eqref{localHormImg} follows easily.
\end{proof}

\subsection{Proof of Theorem~\ref{RieszH1}}

\begin{proof}[Proof of Theorem~\ref{RieszH1}]
We first prove that if $\lambda \in [0,1]$, then $(\Rs_\lambda)_j \colon H^1(\gamma_{-1}) \not \to L^1(\gamma_{-1})$ for every $n\geq 1$. To do this, let $a$ be the $H^1$-atom defined in~\eqref{atomexpl}, and observe that~\eqref{aPsien} together with Lemma~\ref{lemmaunbounded}, (i) imply that $(\Rs_\lambda)_j a \notin L^1(\gamma_{-1})$. This proves both the ``only if'' part of (i), and (ii) for $\lambda\in [0,1]$.

\smallskip

To complete the proof of (i), it remains to prove that $\Rs_\lambda$ is bounded from $H^1(\gamma_{-1})$ to $L^1(\gamma_{-1})$ when $n=1$ and $\lambda>1$. We do this by verifying the local Hormander-type condition (cf.~\cite[Theorem 8.2 and Remark 8.3]{CMM})
\begin{equation}\label{localHormRiesz}
\sup_{B\in \mathcal{B}_1} r_B \sup_{y\in B} \int_{(2B)^c}\abs{\partial_y k_{\Rs_\lambda}(x,y)}\gamma_{-1}(x)\,\dd x<\infty.
\end{equation}
Up to a constant factor, by~\eqref{Rieszkernel} the integral in~\eqref{localHormRiesz} is
\begin{equation}\label{inf}
\int_{(2B)^c} \bigg|\int_0^1 \frac{r^{\lambda}}{\sqrt{- \log r} (1-r^2)} \partial_y \bigg[\phi(r,x,y) e^{-\psi(r,x,y)^2}\bigg] \,\dd r\bigg|\, \dd x.
\end{equation}
We first focus on the inner integral of~\eqref{inf}, which is
\[\int_0^1 \frac{r^{\lambda}(\partial_y \phi)e^{-\psi(r,x,y)^2} }{\sqrt{- \log r} (1-r^2)} \, \dd r + \int_0^1 \frac{-2r^\lambda \psi \phi (\partial_y \psi) e^{-\psi^2}}{\sqrt{- \log r} (1-r^2)} \, \dd r.\]
The first term is
\[\int_0^1 e^{-\psi(r,x,y)^2} \frac{r^{\lambda+1} }{(1-r^2)^{3/2} \sqrt{-\log r}}\, \dd r\]
Since $-\phi= (1-r^2)\partial_r \psi$ and $2\psi e^{-\psi^2}(\partial_r \psi) = -(\partial_r e^{-\psi^2})$, the second term is
\begin{equation}\label{infinizio}
\begin{split}
\int_0^1 \frac{2r^\lambda \psi (\partial_r \psi) e^{-\psi^2} (\partial_y \psi)}{\sqrt{-\log r}}\, \dd r = - \int_0^1 \frac{-r^\lambda }{\sqrt{1-r^2}\sqrt{-\log r}}(\partial_r e^{-\psi^2}) \,\dd r. 
\end{split}
\end{equation}
Integrating by parts, the boundary terms vanish and~\eqref{infinizio} equals
\[
\begin{split}
\int_0^1 e^{-\psi^2}\bigg( \frac{-\lambda r^{\lambda-1}}{\sqrt{1-r^2}\sqrt{-\log r}} - \frac{r^{\lambda-1}}{2\sqrt{1-r^2} (-\log r)^{3/2}} -\frac{r^{\lambda+1}}{(1-r^2)^{3/2} \sqrt{-\log r}}\bigg) \, \dd r.
\end{split}
\]
Therefore, two terms cancel out in the inner integral of~\eqref{inf}, which then equals
\begin{align*}
\int_{(2B)^c} \bigg|\int_0^1 e^{-\psi^2}\bigg( &\frac{-\lambda r^{\lambda-1}}{\sqrt{1-r^2}\sqrt{-\log r}} - \frac{r^{\lambda-1}}{2\sqrt{1-r^2} (-\log r)^{3/2}}\bigg) \,\dd r\bigg|\, \dd x 
\\& \leq C \int_{(2B)^c} \int_0^1 \frac{r^{\lambda-1}e^{-\psi^2}}{\sqrt{1-r^2} }\bigg(\frac{1}{\sqrt{-\log r}} + \frac{1}{(-\log r)^{3/2}}\bigg)\, \dd r\, \dd x \\&= J_1(y,B) + J_2(y,B).
\end{align*}
For each $\ell$ we split each of the integrals $J_\ell$ into $J_\ell^1$ and $J_\ell^2$ according to the splitting $(0,1)= (0,3/4)\cup (3/4,1)$. First observe that
\begin{align*}
J_\ell^1(y,B) &\leq C \int_0^{3/4} \frac{r^{\lambda-1}}{(-\log r)^{1/2}} \int_{(2B)^c} e^{-c(rx-y)^2}\, \dd x\, \dd r \leq C \int_0^{3/4} \frac{r^{\lambda-2}}{(-\log r)^{1/2}} \, \dd r\leq C
\end{align*}
for every $y\in B$ and every $\lambda>1$. As for the remaining integrals,
\[J_\ell^2(y,B)\leq C \int_{3/4}^1 \frac{1}{(1-r^2)^2}\int_{(2B)^c}e^{-\frac{(rx-y)^2}{1-r^2}}\, \dd x\, \dd r\]
and thus one can argue as above (\emph{i.e.}, as~\cite[p.\ 310]{MM} together with Lemma~\ref{lemmatech} when $n=1$) to get
\[J_\ell^2(y,B)\leq \frac{C}{r_B}\]
for every $\ell=1,2$. This completes the proof of (i).\par

\smallskip

(ii) Let now $\lambda\geq 0$ and $n\geq 2$. Inspired by~\cite{MMS}, we prove that $(\Rs_\lambda)_j$ is unbounded on admissible cubes at scale 1 when $j=1$. The statement for $j=1$ then follows by Remark~\ref{rem:cubic}, and the proofs for $j>1$ are analogous. Observe, however, that the statement for $\lambda\in [0,1]$ and every $j=1,\dots,n$ follows by the first part of the proof.

Consider the family of cubes $Q=Q(\xi)$ centred in $(\xi,0,\dots,0)$ and of sidelength $2/\xi$, for $\xi$ large. Let 
\[Q_+ = Q \cap \{y_2\geq 0\}, \quad Q_- = Q \cap \{y_2 < 0\} \]
and define
\[ a= - \frac{1}{\gamma_{-1}(Q)}(\chi_{Q_+} - \chi_{Q_-}).\]
Since $a$ is an atom supported in an admissible cube for every $\xi$ large enough, its atomic norm is bounded independently of $\xi$. We shall prove that $\|(\Rs_\lambda)_1 a\|_{L^1(\gamma_{-1})} \to \infty$ as $\xi \to \infty$.

First of all, observe that by~\eqref{Rieszkernel}
\[(\Rs_\lambda)_1 a(x) e^{|x|^2} = \frac{2 \pi^{-\frac{n+1}{2}}}{\gamma_{-1}(Q)} \int_{Q_+} \int_0^1 \frac{r^{n+\lambda-1}(x_1-ry_1)e^{-|\psi(r,x,y)|^2}}{(1-r^2)^{(n+2)/2}\sqrt{- \log r}} (1-e^{-\tau(r,x_2,y_2)}) \, \dd r \,\dd\gamma_{-1}(y)\]
where
\[\tau(r,x_2,y_2)=\frac{4rx_2y_2}{1-r^2}.\] 
Define now the function
\[\nu(\xi,x_1) \coloneqq \sqrt{\frac{x_1 -\xi}{x_1}}, \qquad x\in Q\]
and the set
\begin{multline*}
A_Q \coloneqq \bigg\{x\in \R^n \colon x_1 \in \bigg(\xi +\frac{4}{\xi}, \xi +1\bigg), \; x_2 \in \bigg[\frac{\nu(\xi,x_1)}{2},\nu(\xi,x_1)\bigg], \\ |x_k|\leq \nu(\xi,x_1), \, 3\leq k\leq n\bigg\}. 
\end{multline*}
If $x\in A_Q$, $y\in Q_+$ and $r\in (0,1)$, then $x_1 -ry_1 > 0$ and $\tau(r,x_2,y_2)\geq 0$, so that $1-e^{-\tau(r,x_2,y_2)}\geq 0$. Thus the integral defining $(\Rs_\lambda)_1 a(x) e^{|x|^2}$ is positive. Therefore, if $x\in A_Q$ we can bound $(\Rs_\lambda)_1 a(x)e^{|x|^2}$ from below by restricting the integral over $(0,1)$ to the integral on the subset
\[I(x_1) \coloneqq \bigg\{r\in (0,1) \colon \bigg|r-\frac{\xi}{x_1}\bigg|< \frac{\nu(\xi,x_1)}{2\xi}\bigg\},\]
whose measure is $|I(x_1)|= \nu(\xi,x_1)/\xi$. Since, for every $r\in I(x_1)$,
\[
\begin{split}
r > \frac{\xi}{x_1} -\frac{\nu(\xi,x_1)}{2\xi}& 
%> \frac{\xi}{\xi+1} - \frac{1}{2\xi} \sqrt{\frac{x_1-\xi}{x_1}}\\&
 > \frac{\xi}{\xi+1} - \frac{1}{2}\frac{1}{\xi \sqrt{\xi}} \xrightarrow{\xi \to \infty}{ 1},
\end{split}
\]
we can suppose $\xi$ large enough to imply $I(x_1) \subset \left(\frac{1}{2},1\right)$. With this choice $-\log r \leq C(1-r^2)$ for every $r\in I(x_1)$, so that if $x\in A_Q$ then
\[
\begin{split}
(\Rs_\lambda)_1 a(x) e^{|x|^2} \geq \frac{c}{\gamma_{-1}(Q)} \int_{Q_+} \int_{I(x_1)} \frac{(x_1-ry_1)e^{-|\psi(r,x,y)|^2}}{(1-r^2)^{(n+3)/2}} (1-e^{-\tau(r,x_2,y_2)}) \, \dd r\, \dd\gamma_{-1}(y)
\end{split}
\]
which yields
\begin{multline*}
\|(\Rs_\lambda)_1 a\|_{L^1(\gamma_{-1})} \geq \frac{c}{\gamma_{-1}(Q)} \int_{A_Q}\int_{Q_+} \int_{I(x_1)} \frac{(x_1-ry_1)}{(1-r^2)^{(n+3)/2}} e^{-|\psi(r,x,y)|^2}\\ \times (1-e^{-\tau(r,x_2,y_2)}) \, \dd r \,\dd\gamma_{-1}(y)\, \dd x.
\end{multline*}
Assume for a moment that for $x\in A_Q$, $y\in Q_+$ and $r \in I(x_1)$
\begin{equation}\label{eq1}
1-r^2 \approx \nu(\xi,x_1)^2,
\end{equation}
\begin{equation}\label{eq2}
x_1 -ry_1 \geq c(x_1-\xi),
\end{equation}
\begin{equation}\label{eq3}
e^{-|\psi(r,x,y)|^2} \geq c,
\end{equation}
\begin{equation}\label{eq4}
1-e^{-\tau(r,x_2,y_2)} \approx \frac{y_2}{\nu(\xi,x_1)},
\end{equation}
\begin{equation}\label{eq5}
\frac{1}{\gamma_{-1}(Q)}\int_{Q_+}y_2 \gamma_{-1}(y)\,\dd y \geq \frac{c}{\xi}.
\end{equation}
Then,
\[
\begin{split}
\|(\Rs_\lambda)_1 a\|_{L^1(\gamma_{-1})} &\geq \frac{c}{\gamma_{-1}(Q)} \int_{A_Q} \int_{Q_+} \frac{1}{\nu(\xi,x_1)^{n+3}} \frac{y_2}{\nu(\xi,x_1)} (x_1-\xi) |I(x_1)|\gamma_{-1}(y)\, \dd y \, \dd x\\& \geq c\int_{A_Q} \frac{1}{\nu(\xi,x_1)^{n+1}} \frac{1}{\xi}\, \dd x\\&= \frac{c}{\xi} \int_{\xi + 4/\xi}^{\xi +1} \frac{x_1}{x_1-\xi}\, \dd x_1 \geq c \log \xi,
\end{split}
\]
and the unboundedness of $(\Rs_\lambda)_1$ follows. It remains then to prove~\eqref{eq1}-\eqref{eq5}. From now on, we assume that $x\in A_Q$, $y\in Q_+$ and $r \in I(x_1)$.

We start by observing that, since \[\frac{4}{x_1}<\frac{4}{\xi}< x_1 -\xi,\]
then by taking the geometric mean of the first and third quantities we obtain
\begin{equation}\label{noneps}
\frac{4}{x_1}< 2\nu(\xi,x_1) < x_1-\xi.
\end{equation}
For $\xi$ large enough, this yields
\begin{equation}\label{eps}
\frac{3}{\xi}< 2\nu(\xi,x_1) < x_1-\xi.
\end{equation}
We now prove~\eqref{eq1}. Since $r >\frac{1}{2}$, we get
\[1-r^2 \approx 1-r = \nu(\xi,x_1)^2 - \bigg(r- \frac{\xi}{x_1}\bigg)\]
where the equality holds by definition of $\nu$. By the definition of $I(x_1)$ and~\eqref{eps}, we get
\[\bigg| r- \frac{\xi}{x_1}\bigg| < \frac{\nu(\xi,x_1)}{2\xi}< \frac{\nu(\xi,x_1)^2}{3},\]
which gives~\eqref{eq1}. 

As for~\eqref{eq2}, observe that the definition of $I(x_1)$ and~\eqref{eps} also imply
\[r < \frac{\nu(\xi,x_1)}{2\xi} + \frac{\xi}{x_1} < \frac{x_1 - \xi}{4 \xi} + \frac{\xi}{x_1}\]
and since $0\leq y_1 < \xi + \frac{1}{\xi}$,
\[
\begin{split}
x_1 &-r y_1 > x_1 - \bigg( \frac{x_1 - \xi}{4 \xi} + \frac{\xi}{x_1}\bigg) \bigg( \xi + \frac{1}{\xi}\bigg)\\&= \bigg(x_1 - \frac{\xi^2}{x_1}\bigg) - \frac{x_1 - \xi}{4} - \frac{1}{\xi}\bigg(\frac{x_1 - \xi}{4\xi} + \frac{\xi}{x_1}\bigg)\\&> (x_1 - \xi) - \frac{x_1 - \xi}{4} - \frac{1}{\xi}\bigg(\frac{x_1 - \xi}{4\xi} + \frac{\xi}{x_1}\bigg)= \frac{3}{4}(x_1 - \xi)-\frac{x_1 - \xi}{4\xi^2} - \frac{1}{x_1} \geq \frac{1}{4}(x_1 -\xi),
\end{split}
\]
where we used~\eqref{noneps} in the last inequality. Thus~\eqref{eq2} is proved.

To prove~\eqref{eq3}, observe that
\[
\begin{split}
|rx_1 -y_1| &\leq \bigg| x_1\bigg(r- \frac{\xi}{x_1}\bigg)\bigg| + | \xi -y_1 |\\&\leq (\xi +1)\frac{\nu(\xi,x_1)}{2\xi} + \frac{1}{\xi}\leq c\bigg[ \frac{\nu(\xi,x_1)}{2} + \frac{1}{x_1}\bigg] \leq c \, \nu(\xi,x_1) \leq c \sqrt{1-r^2}
\end{split}
\]
where the last two inequalities are a consequence of~\eqref{noneps} and~\eqref{eq1} respectively. If $k\geq 2$, by~\eqref{eps} and the definition of $A_Q$
\[
\begin{split}
|rx_k -y_k| &\leq |x_k| + |y_k| \leq \nu(\xi,x_1) + \frac{1}{\xi} \leq c \nu(\xi,x_1) \leq c \sqrt{1-r^2},
\end{split}
\]
where the last inequality holds again by~\eqref{eq1}. This shows that $|\psi|^2 \leq c$, from which~\eqref{eq3} follows. 

For~\eqref{eq4}, observe that
\begin{equation}\label{tau}
\tau = \frac{4rx_2 y_2}{1-r^2} \approx \frac{4r \nu(\xi,x_1) y_2}{\nu(\xi,x_1)^2} \approx \frac{y_2}{\nu(\xi,x_1)}
\end{equation}
since $x_2 \approx \nu(\xi,x_1)$ and by~\eqref{eq1}, and that
\[0\leq \tau \leq c\frac{y_2}{\nu(\xi,x_1)} \leq c \frac{1}{\xi} \sqrt{\frac{x_1}{x_1 - \xi}} \leq c \sqrt{\frac{\xi+1}{\xi}} \leq c\]
by~\eqref{tau} and~\eqref{eps}, if $\xi$ is sufficiently large. Therefore
\[
1-e^{-\tau} \approx \tau \approx \frac{y_2}{\nu(\xi,x_1)}
\]
which proves~\eqref{eq4}.

Finally, we prove~\eqref{eq5}. Since for every positive and sufficiently small $u$ one has $e^{u^2}\leq 1+u$, while $e^u -1 \geq u$ for every $u\geq 0$, if $\xi$ is large enough
\[
\frac{1}{\gamma_{-1}(Q)}\int_{Q_+}y_2 \gamma_{-1}(y)\, \dd y = c \frac{\int_0^{1/\xi}y_2 e^{y_2^2} \, \dd y_2}{\int_0^{1/\xi}e^{y_2^2} \, \dd y_2} \geq c \frac{e^{1/\xi^2} -1}{\int_0^{1/\xi}e^{y_2^2} \, \dd y_2} \geq \frac{c}{\xi}
\]
which is~\eqref{eq5}. This completes the proof of (ii) and of the theorem.
\end{proof}

\section{Boundedness from $X^1_\mu(\gamma_{-1})$ to $L^1(\gamma_{-1})$}\label{BoundednessX1OUrovesciato}
Theorems~\ref{teo:X1Impow} and~\ref{teo:X1Riesz} below characterize the boundedness of $(\As +\lambda I)^{iu}$ and $(\Rs_\lambda)_j$, $\lambda\geq 0$, from the Hardy spaces $X^1_\mu(\gamma_{-1})$, $\mu\geq 0$, to $L^1(\gamma_{-1})$, in terms of $\lambda$ and $\mu$. Their proof will occupy the remainder of the paper.

\begin{theorem}\label{teo:X1Impow}
Let $\lambda,\mu \geq 0$ and $u\neq 0$.
\begin{itemize}
\item[\emph{(i)}] $\As^{iu}$ is bounded from $X^1_\mu(\gamma_{-1})$ to $L^1(\gamma_{-1})$ if and only if $\mu =0$;
\item[\emph{(ii)}] if $\lambda>0$, then $(\As+\lambda I)^{iu}$ is bounded from $X^1_\mu(\gamma_{-1})$ to $L^1(\gamma_{-1})$ for every $\mu\geq 0$.
\end{itemize}
\end{theorem}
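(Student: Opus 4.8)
The plan is to deduce part (ii) directly from the $H^1$-theory and to prove part (i) by separate arguments for the two implications. For (ii) it suffices to observe that every $X^1_\mu$-atom is an $H^1$-atom: condition (ii) of Definition~\ref{atom-lambda} requires $a\in q^2_\mu(\bar B)^\perp$, and the constants belong to $q^2_\mu(\bar B)$ since $(\As+\mu I)c=\mu c$ is constant, so $\int_{\R^n}a\,\dd\gamma_{-1}=0$. Hence $X^1_\mu(\gamma_{-1})\hookrightarrow H^1(\gamma_{-1})$ continuously; since $\lambda>0$, Theorem~\ref{ImpowH1} gives that $(\As+\lambda I)^{iu}$ is bounded from $H^1(\gamma_{-1})$ to $L^1(\gamma_{-1})$, and composing the two we obtain boundedness of $(\As+\lambda I)^{iu}$ from $X^1_\mu(\gamma_{-1})$ to $L^1(\gamma_{-1})$ for every $\mu\geq 0$.

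For the ``only if'' implication of (i), assume $\mu>0$; it is enough to exhibit a single $X^1_\mu$-atom $a$ with $\As^{iu}a\notin L^1(\gamma_{-1})$. By Lemma~\ref{Psi}, (3) there is $\psi\in C^\infty_c(B(0,1))$ with zero $\gamma_{-1}$-integral and $(\psi,\Psi_{0,\sigma_0})_{L^2(B,\gamma_{-1})}\neq 0$ for some $\sigma_0\in S^{n-1}$; by Corollary~\ref{corlnotm}, (1) the function $(\As+\mu I)\psi$ is a nonzero multiple of an $X^1_\mu$-atom $a$. Integrating by parts and using $\As\Psi_{0,\sigma_0}=0$ from Lemma~\ref{Psi}, (1),
\[
((\As+\mu I)\psi,\Psi_{0,\sigma_0})_{L^2(B,\gamma_{-1})}=\mu\,(\psi,\Psi_{0,\sigma_0})_{L^2(B,\gamma_{-1})}\neq 0
\]
because $\mu>0$, so $(\Psi_{0,\sigma_0},a)_{L^2(B,\gamma_{-1})}\neq 0$. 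Lemma~\ref{lemmaunbounded0} then gives $\As^{iu}a\notin L^1(\gamma_{-1})$, whence $\As^{iu}$ is not bounded from $X^1_\mu(\gamma_{-1})$ to $L^1(\gamma_{-1})$.

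For the ``if'' implication of (i) ($\mu=0$) the cancellation of $X^1_0$-atoms is essential, since $\As^{iu}$ is \emph{not} bounded from $H^1(\gamma_{-1})$ (Theorem~\ref{ImpowH1} with $\lambda=0$). Following the scheme of~\cite{B}, the plan is to combine the weak type $(1,1)$ of $\As^{iu}$ from Theorem~\ref{teo:weaktype}, (i) with the uniform bound $\sup\{\|\As^{iu}a\|_{L^1(\gamma_{-1})}\colon a\text{ an }X^1_0\text{-atom}\}<\infty$; an operator simultaneously uniformly bounded on $X^1_0$-atoms and of weak type $(1,1)$ extends to a bounded operator from $X^1_0(\gamma_{-1})$ to $L^1(\gamma_{-1})$ by the classical argument of~\cite[p.~95]{Grafakos}. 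For the uniform bound, fix an $X^1_0$-atom $a$ supported in an admissible ball $B$ and split $\R^n=2B\cup(2B)^c$. On $2B$, the $L^2(\gamma_{-1})$-boundedness of $\As^{iu}$, the local doubling of Lemma~\ref{adm} and the size condition give $\|\As^{iu}a\|_{L^1(2B,\gamma_{-1})}\leq\gamma_{-1}(2B)^{1/2}\|\As^{iu}a\|_{L^2(\gamma_{-1})}\lesssim 1$. On $(2B)^c$, set $b=\As^{-1}a$: by Theorem~\ref{support:pres} (with $\lambda=0$), $\supp b\subseteq\bar B$ and $\|b\|_{L^2(\gamma_{-1})}\leq r_B^2\gamma_{-1}(B)^{-1/2}$, whence $\int_B|b|\,\dd\gamma_{-1}\leq r_B^2$; writing $\As^{iu}a=\As^{iu+1}b$ (spectral calculus) and invoking the kernel representation of Theorem~\ref{teo:complex_powers},
\[
\int_{(2B)^c}|\As^{iu}a(x)|\,\dd\gamma_{-1}(x)\leq\int_B|b(y)|\Big(\int_{(2B)^c}|k_{\As^{iu+1}}(x,y)|\,\dd\gamma_{-1}(x)\Big)\,\dd\gamma_{-1}(y),
\]
so it suffices to bound the inner integral by $Cr_B^{-2}$, uniformly for $y\in B$. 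Its portion over the local region $N_2$ is controlled by Lemma~\ref{lemmaKz} with $\Re z=1$, which gives $|k_{\As^{iu+1}}(x,y)|\lesssim e^{-|y|^2}|x-y|^{-(n+2)}$, so that, using $e^{|x|^2-|y|^2}\approx 1$ on $N_2$, this portion is $\lesssim\int_{r_B\leq|x-y|\lesssim 1}|x-y|^{-(n+2)}\,\dd x\approx r_B^{-2}$; its portion over the global region $G$ must be shown to be $\lesssim 1$ (hence $\lesssim r_B^{-2}$, since $r_B\leq 1$) by a Mehler-type estimate in the spirit of Section~\ref{Sec:max}.

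This last global estimate is the main obstacle. The kernel of $\As^{iu+1}$ is one degree more singular at the diagonal than that of $\As^{iu}$ — it carries an extra factor $(1-e^{-2t})^{-1}$ under the integral sign — so the rescalings $t=\tau(s)$ and $s/\beta=\sigma$ used in Proposition~\ref{impowglobal} must be carried through a more singular integrand, and the resulting pointwise bound must then be integrated against $\gamma_{-1}$ over $G$ with a constant that does not depend on the admissible ball $B\ni y$. A minor additional point is to justify the identity $\As^{iu+1}b(x)=\int k_{\As^{iu+1}}(x,y)b(y)\,\dd\gamma_{-1}(y)$ for $x\notin\bar B$ when $b=\As^{-1}a$ is only in $L^2(\gamma_{-1})$ with compact support, Theorem~\ref{teo:complex_powers} being stated for test functions: one approximates $a$ in $L^2(\gamma_{-1})$ by smooth functions supported in a fixed neighbourhood of $B$ and passes to the limit.
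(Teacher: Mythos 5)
Parts (ii) and the ``only if'' direction of (i) are correct and essentially identical to the paper's argument: (ii) follows from the embedding $X^1_\mu(\gamma_{-1})\hookrightarrow H^1(\gamma_{-1})$ and Theorem~\ref{ImpowH1}, and the ``only if'' part uses the atom $(\As+\mu I)\psi_0$ built from Lemma~\ref{Psi} and Corollary~\ref{corlnotm} exactly as in the paper, together with Lemma~\ref{lemmaunbounded0}.

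The gap is in the ``if'' direction of (i). You replace the paper's argument — which bounds the \emph{$L^2(B,\gamma_{-1})$-average} of $\int_{(2B)^c}|k_{\As^{iu+1}}(x,y)|\,\dd\gamma_{-1}(x)$ over $y\in B$ — by a \emph{pointwise} bound $\int_{(2B)^c}|k_{\As^{iu+1}}(x,y)|\,\dd\gamma_{-1}(x)\leq Cr_B^{-2}$ via a spatial split into $N_2$ and $G$. The local-region estimate you give is fine, but your expectation that the global contribution is $\lesssim 1$ uniformly in $y$ is wrong: a computation with the rescaling $t=\tau(s)$, $s=\sigma\beta$ as in Proposition~\ref{maxglobal} shows that the extra $t^{-1}$ factor (relative to the kernel of $\As^{iu}$) contributes an additional factor of order $1/\beta=|x+y|/|x-y|$, which near the boundary of $N_1$, where $|x-y|\approx(1+|x|+|y|)^{-1}$, is of order $|y|^2$. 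Consequently the global integral is of order $|c_B|^2$, not $O(1)$. For an admissible ball this is still $\lesssim r_B^{-2}$, so a pointwise bound may well be achievable, but it requires a genuinely sharper analysis of the global kernel that you have not supplied; this is the substantive part of the proof, not a routine adaptation of Section~\ref{Sec:max}.

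The paper avoids this issue entirely by a different mechanism: it splits the \emph{subordination integral in $t$} at $t=r_B^2$ rather than the spatial integral. The tail $t>r_B^2$ is handled by $L^\infty$-contractivity of $e^{-t\As}$, and the head $t\le r_B^2$ by rewriting $\int_{(2B)^c}h_t(x,y)\,\dd\gamma_{-1}(x)=e^{-t\As}1_{(2B)^c}(y)$ (using symmetry of $h_t$), applying Minkowski's integral inequality, and invoking Takeda's inequality (Lemma~\ref{almostTakeda}), which gives the exponential decay $e^{-cr_B^2/t}$ on the $L^2(B)$-average of $e^{-t\As}1_{(2B)^c}$. This sidesteps any pointwise kernel bound in the global region (as well as your approximation concern about $b=\As^{-1}a$ being merely $L^2$). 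To complete your proof you would either need to prove the stronger pointwise global estimate $\lesssim r_B^{-2}$ directly, or switch to the paper's Takeda-based averaging argument.
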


\begin{theorem}\label{teo:X1Riesz}
Let $\lambda,\mu \geq 0$ and $j=1,\dots, n$.
\begin{itemize}
\item[\emph{(i)}] If $\lambda \in [0,1)$, then $(\Rs_\lambda)_j$ is unbounded from $X^1_\mu(\gamma_{-1})$ to $L^1(\gamma_{-1})$ for every $\mu \geq 0$;
\item[\emph{(ii)}] $(\Rs_1)_j$ is bounded from $X^1_\mu(\gamma_{-1}) $ to $L^1(\gamma_{-1})$ if and only if $\mu=1$;
\item[\emph{(iii)}] if $\lambda>1$, then $(\Rs_\lambda)_j$ is bounded from $X^1_\mu(\gamma_{-1})$ to $L^1(\gamma_{-1})$ for every $\mu\geq 0$.
\end{itemize}
\end{theorem}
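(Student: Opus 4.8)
\emph{Strategy.} The necessity parts of (i) and (ii) follow from the unboundedness criterion of Lemma~\ref{lemmaunbounded}, fed with explicit $X^1_\mu$-atoms produced by Corollary~\ref{corlnotm}; the sufficiency parts follow, via the classical argument recalled in the introduction, from the weak type $(1,1)$ of $\Rs_\lambda$ for $\lambda\geq 1$ (Theorem~\ref{teo:weaktype}, (ii)) together with the uniform bound $\sup\{\|(\Rs_\lambda)_j a\|_{L^1(\gamma_{-1})}\colon a\ \text{an}\ X^1_\mu\text{-atom}\}<\infty$.

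\emph{Unboundedness.} Fix $B=B(0,1)$ and, for $\psi\in C_c^\infty(B)$ with $\int\psi\,\dd\gamma_{-1}=0$, let $a=(\As+\mu I)\psi$, a multiple of an $X^1_\mu$-atom by Corollary~\ref{corlnotm}, (1). If $\lambda\in[0,1]$ and $\mu\neq\lambda$, integration by parts and the $\lambda$-harmonicity of $\Psi_{\lambda,\sigma}$ yield $(\Psi_{\lambda,\sigma},a)_{L^2(B,\gamma_{-1})}=(\lambda-\mu)(\psi,\Psi_{\lambda,\sigma})_{L^2(B,\gamma_{-1})}$, which by Lemma~\ref{Psi}, (3) is nonzero for a suitable $\psi$ and some $\sigma_0$; Lemma~\ref{lemmaunbounded}, (i) then gives $(\Rs_\lambda)_j a\notin L^1(\gamma_{-1})$. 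If $\mu=\lambda\in[0,1)$, then $(\Psi_{\lambda,\sigma},a)_{L^2(B,\gamma_{-1})}=0$ automatically (since $\Psi_{\lambda,\sigma}\in h^2_\lambda(\bar B)\subseteq q^2_\lambda(\bar B)$), while $(\As+\lambda I)\Phi_{\lambda,\sigma}=2\Psi_{\lambda,\sigma}$ gives $(\Phi_{\lambda,\sigma},a)_{L^2(B,\gamma_{-1})}=2(\psi,\Psi_{\lambda,\sigma})_{L^2(B,\gamma_{-1})}$, nonzero for a suitable $\psi$, so Lemma~\ref{lemmaunbounded}, (ii) applies (this is where $\lambda<1$ is needed). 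This establishes (i) for all $\mu\geq0$ and the necessity of $\mu=1$ in (ii).

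\emph{Boundedness.} Let $a$ be an $X^1_\mu$-atom supported in $B\in\mathcal{B}_1$ and put $b=(\As+\mu I)^{-1}a$. By Theorem~\ref{support:pres}, $\supp b\subseteq\bar B$ and $\|b\|_{L^2(\gamma_{-1})}\leq r_B^2\gamma_{-1}(B)^{-1/2}$, so $\|b\|_{L^1(\gamma_{-1})}\leq r_B^2$; a short integration by parts also gives $\int_{\R^n}\As b\,\dd\gamma_{-1}=0$, hence $\int b\,\dd\gamma_{-1}=0$ when $\mu>0$ (using $\As b=a-\mu b$ and $\int a\,\dd\gamma_{-1}=0$). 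From the identity $(\As+\lambda I)^{-1/2}(\As+\mu I)=(\As+\lambda I)^{1/2}+(\mu-\lambda)(\As+\lambda I)^{-1/2}$ one gets $(\Rs_\lambda)_j a=\partial_j(\As+\lambda I)^{1/2}b+(\mu-\lambda)(\Rs_\lambda)_j b$. On $2B$, both summands are bounded in $L^1(\gamma_{-1})$ by $\gamma_{-1}(2B)^{1/2}$ times their $L^2$-norms, which are $\lesssim\gamma_{-1}(B)^{-1/2}$ (for the first because $\|\partial_j(\As+\lambda I)^{1/2}b\|_2\leq\sqrt2\|(\As+\lambda I)b\|_2=\sqrt2\|a+(\lambda-\mu)b\|_2$, via the identity $\|\nabla f\|_{L^2(\gamma_{-1})}^2=2(\As f,f)_{L^2(\gamma_{-1})}$, which also gives the $L^2$-boundedness of $\Rs_\lambda$ used for the second); by Lemma~\ref{adm} this is $\lesssim 1$. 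On $(2B)^c$, using the support and $L^1$-smallness of $b$, and — when $\mu>0$ — its vanishing $\gamma_{-1}$-integral (which, by the mean value theorem, buys an extra factor $r_B$), the far-region norm is controlled by $r_B^2$ (resp.\ $r_B^3$ if $\mu>0$) times $\sup_{z\in B}\int_{(2B)^c}(|k_{\partial_j(\As+\lambda I)^{1/2}}(x,z)|+|k_{\Rs_\lambda}(x,z)|)\gamma_{-1}(x)\,\dd x$, or of the corresponding gradient-in-$z$ kernels when $\mu>0$ — with the $\Rs_\lambda$-term absent when $\mu=\lambda$. Splitting $(2B)^c$ along $N_1$ and $G$: on $(2B)^c\cap N_1$ one has $\gamma_{-1}(x)\approx\gamma_{-1}(z)$ and the Calder\'on–Zygmund bounds behind Propositions~\ref{proplocalriesz} and~\ref{impowloc} (with Lemma~\ref{lemmalocal}) make the integral $\lesssim r_B^{-2}$ (resp.\ $r_B^{-3}$), so this piece is $\lesssim 1$ for all $n$; the genuinely new work is on $(2B)^c\cap G$.

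\emph{Main obstacle.} On the global piece I would run the changes of variables and estimates behind Proposition~\ref{rieszglobal} (and their analogues for $\partial_j(\As+\lambda I)^{1/2}$). The catch is that the kernel $\bar K$ of Section~\ref{Sec:max} controls the kernels in play but is only of weak type $(1,1)$ — $\int\bar K(x,z)\gamma_{-1}(x)\,\dd x=\infty$ — so one cannot afford to lose any of the gains $r_B^2$ (support preservation) and $r_B$ (cancellation of $b$), and one must track the angular variable $\theta$ and the parameters $\alpha,\beta$ jointly with the admissibility $r_B\lesssim(1+|c_B|)^{-1}$. The estimate closes precisely because the extra decay $e^{-(\lambda-1)t}$ present for $\lambda>1$ renders the relevant global kernel $L^1(\gamma_{-1})$-summable after the $r_B$-gains; at the borderline $\lambda=1$ this fails for the $(\Rs_1)_j b$-term, which is why (ii) forces $\mu=1$: only then does that term disappear, leaving the better-behaved $\partial_j(\As+I)^{1/2}$ (whose heat representation carries $t^{-3/2}$ rather than $t^{-1/2}$) together with the extra $r_B$ available since $\mu=1>0$. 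For any other $\mu$ the surviving $\Rs_1$-term is exactly the obstruction already detected in part~(ii)'s necessity.
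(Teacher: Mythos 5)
Your treatment of the unboundedness statements (all of (i), and the ``only if'' of (ii)) is correct and matches the paper's: the same atoms $a_\mu=(\As+\mu I)\psi_0$, the same integration-by-parts identities against $\Psi_{\lambda,\sigma}$ and $\Phi_{\lambda,\sigma}$, and the same appeal to Lemma~\ref{lemmaunbounded}. The high-level boundedness strategy (weak type $(1,1)$ from Theorem~\ref{teo:weaktype} plus uniform boundedness on atoms) is also the right one.

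The gap is in your proof of the uniform bound on atoms. You write $(\Rs_\lambda)_j a=\partial_j(\As+\lambda I)^{1/2}b+(\mu-\lambda)(\Rs_\lambda)_j b$ with $b=(\As+\mu I)^{-1}a$, and then try to control both summands on $(2B)^c$ through kernel estimates split along $N_1$ and $G$. On the global region $G$ you correctly observe that the controlling kernel $\bar K$ is merely of weak type and not in $L^1$, and that the $(\Rs_\lambda)_j b$-term is the sticking point; but you then assert without proof that ``the estimate closes'' for $\lambda>1$ because of the extra factor $e^{-(\lambda-1)t}$, which is precisely the hard part. As it stands the argument is incomplete: nothing in your sketch shows that $\sup_{z\in B}\int_{(2B)^c\cap G}|k_{\Rs_\lambda}(x,z)|\,\dd\gamma_{-1}(x)$ is $O(r_B^{-2})$ (or $O(r_B^{-3})$) with the required uniformity in $B$, and the weak-type machinery of Section~\ref{Sec:max} does not deliver that. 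The paper avoids this entirely with two tools you did not use. First, Lemma~\ref{Lemma4B} is a \emph{direct} $L^1((4B)^c)$ estimate for $\nabla(\As+\lambda I)^{1/2}$ acting on $L^1$-functions supported in $\bar B$, valid for all $\lambda\geq 1$, proved by an elementary integral computation (via Lemma~\ref{lemmatech}) and with no reference to $\bar K$; combined with Theorem~\ref{support:pres} this gives the uniform bound on $X^1_\lambda$-atoms, handling both $\lambda=1$ (so (ii)'s ``if'' direction) and $\lambda>1$. Second, the passage from $X^1_\lambda$-atoms to $X^1_\mu$-atoms for $\lambda>1$ is done not by your additive splitting, but by the multiplicative reduction of Lemma~\ref{lemma4cose}: $G_{\lambda,\mu}(\As)=(\As+\lambda I)(\As+\mu I)^{-1}$ carries $X^1_\mu$-atoms to (multiples of) $X^1_\lambda$-atoms, one has the intertwining $(\Rs_\lambda)_j G_{\mu,\lambda}(\As)=G_{\mu,\lambda}(\As-I)(\Rs_\lambda)_j$, and $G_{\mu,\lambda}(\As-I)$ is bounded on $L^1(\gamma_{-1})$ when $\lambda>1$ because $G_{\mu,\lambda}(\cdot-1)$ is bounded and holomorphic on a neighbourhood of $\sigma_1(\As)\cup\{\infty\}=\{\Re z\geq 0\}\cup\{\infty\}$. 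This multiplier trick is what lets the extra regularity of $\lambda>1$ be exploited cleanly, replacing the hard global estimate you sketch but do not carry out.
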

\subsection{Proof of Theorem~\ref{teo:X1Impow}}
\begin{lemma}\label{almostTakeda}
For every $t\in (0,r_B^2]$ and for all $B\in \mathcal{B}_1$
\begin{equation*}
\frac{1}{\gamma_{-1}(B)}\int_B (e^{-t\As}1_{(2B)^c})^2\, \dd \gamma_{-1} \leq C e^{-cr_B^2/t}.
\end{equation*}
\end{lemma}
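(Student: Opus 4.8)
The plan is to estimate the heat semigroup acting on the indicator of the complement of $2B$ by a Gaussian-in-time tail bound, exploiting the explicit Mehler kernel $H_t$ from~\eqref{ht} together with the local doubling property of $\gamma_{-1}$ on admissible balls (Lemma~\ref{adm}). First I would fix $B\in\mathcal B_1$ with centre $c_B$ and radius $r_B\le\min(1,1/|c_B|)$, and fix $t\in(0,r_B^2]$. For $x\in B$ I write
\[
e^{-t\As}1_{(2B)^c}(x)=\int_{(2B)^c}h_t(x,y)\,\gamma_{-1}(y)\,\dd y=\int_{(2B)^c}H_t(x,y)\,\dd y,
\]
and use the formula $H_t(x,y)=\dfrac{e^{-nt}}{\pi^{n/2}(1-e^{-2t})^{n/2}}e^{-|x-e^{-t}y|^2/(1-e^{-2t})}$. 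Since $t\le r_B^2\le1$, we have $1-e^{-2t}\approx t$, and $e^{-nt}\le1$, so it suffices to bound
\[
\int_{(2B)^c}\frac{1}{t^{n/2}}\,e^{-c|x-e^{-t}y|^2/t}\,\dd y
\]
uniformly for $x\in B$.

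The key geometric point is that for $x\in B$ and $y\in(2B)^c$ one has $|x-y|\ge r_B$, and $|x-e^{-t}y|$ is comparable to $|x-y|$ minus a controlled error: indeed $|x-e^{-t}y|\ge|x-y|-(1-e^{-t})|y|$, and on an admissible ball $|y|\lesssim|c_B|+r_B+|x-y|$ with $(1-e^{-t})\lesssim t\le r_B^2\le r_B\min(1,1/|c_B|)$, so $(1-e^{-t})|y|\lesssim r_B + r_B|x-y|/|c_B|\cdot|c_B|\lesssim r_B(1+|x-y|)$; a more careful bookkeeping (splitting according to whether $|x-y|\le \tfrac12 r_B^{-1}$ is relevant, or simply using $|y|\le |c_B|+|x-y|+r_B$ and $(1-e^{-t})|c_B|\le t|c_B|\le r_B^2|c_B|\le r_B$) gives $|x-e^{-t}y|\ge c|x-y|$ once, say, $|x-y|\ge C r_B$, and $|x-e^{-t}y|^2\ge c|x-y|^2 - C r_B^2$ in general. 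Then I split $(2B)^c=\{C r_B\le|x-y|\le 1\}\cup\{|x-y|>1\}$ (or just estimate directly), substitute $v=(x-e^{-t}y)/\sqrt t$, and bound
\[
\int_{(2B)^c}\frac{1}{t^{n/2}}e^{-c|x-e^{-t}y|^2/t}\,\dd y\lesssim e^{-c r_B^2/(2t)}\int_{\R^n}\frac{1}{t^{n/2}}e^{-c|x-e^{-t}y|^2/(2t)}\,\dd y\lesssim e^{-c r_B^2/(2t)},
\]
where the last Gaussian integral is a constant (the Jacobian $e^{nt}$ of $y\mapsto e^{-t}y$ is bounded). This gives the pointwise bound $e^{-t\As}1_{(2B)^c}(x)\le C e^{-c r_B^2/t}$ for every $x\in B$, which after squaring and integrating against $\gamma_{-1}$ over $B$ (and dividing by $\gamma_{-1}(B)$) yields the claim with the exponent halved, i.e. $Ce^{-cr_B^2/t}$ after renaming $c$.

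The main obstacle is the careful control of the ``drift'' error $(1-e^{-t})|y|$ on the global part of $(2B)^c$ (where $|y|$ can be large): one must make sure that the factor $e^{-c|x-e^{-t}y|^2/t}$, after extracting $e^{-c r_B^2/t}$, still integrates to a constant even though the substitution $y\mapsto e^{-t}y$ mildly dilates the domain. This is handled by noting that for $|y|$ large, $|x-e^{-t}y|\ge(1-e^{-t})|y|-|x|\gtrsim t|y|$ for $|y|$ beyond a fixed threshold, so that region contributes $\int_{|y|\text{ large}} t^{-n/2}e^{-ct|y|^2}\,\dd y\lesssim 1$ with room to spare for the extracted Gaussian factor; the bounded region $Cr_B\le|x-y|\lesssim 1$ is entirely elementary. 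Combining the two regions completes the proof.
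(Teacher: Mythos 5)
Your approach is genuinely different from the paper's: the paper simply observes that $-\As$ is the weighted Laplacian of the weighted manifold $(\R^n,d_{Euc},\gamma_{-1})$, invokes Lemma~\ref{adm} for $\gamma_{-1}(2B\setminus B)\approx\gamma_{-1}(B)$, and cites Takeda's inequality \cite[Theorem~12.9]{Grigor'yan}, a one--line argument. Your elementary Mehler-kernel computation is a reasonable alternative and in fact aims at a stronger, pointwise bound, but as written it has two genuine gaps.

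First, the triangle-inequality manipulations you rely on, namely ``$|x-e^{-t}y|\ge c|x-y|$ once $|x-y|\ge C r_B$'' and ``$|x-e^{-t}y|^2\ge c|x-y|^2-Cr_B^2$ in general,'' are not true uniformly in $r_B\le 1$ and $t\le r_B^2$. For example, take $n=1$, $c_B=1/r_B$, $x=c_B+r_B$, $y=c_B+2r_B$ and $t=r_B^2$: then $e^{-t}y=x+O(r_B^3)$, so $|x-e^{-t}y|$ is nearly zero while $|x-y|=r_B$. Your own bookkeeping gives $(1-e^{-t})|y|\lesssim r_B(1+|x-y|)$, which is \emph{comparable} to $|x-y|$ near $\partial(2B)$ and therefore cannot be subtracted. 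The remedy is a preliminary reduction you never make explicit: if $t\ge \varepsilon r_B^2$ for a small fixed $\varepsilon>0$, then $e^{-cr_B^2/t}\ge e^{-c/\varepsilon}$ while $e^{-t\As}1_{(2B)^c}\le 1$ (the kernel $H_t(x,\cdot)$ integrates to $1$), so the estimate is trivial; it then suffices to work with $t\le \varepsilon r_B^2$, in which regime $(1-e^{-t})|y|\le r_B/2$ for all $y\in(2B)^c$ with $|y|\lesssim 1/r_B$, and the subtraction goes through.

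Second, the treatment of large $|y|$ is incorrect as stated. The inequality ``$|x-e^{-t}y|\ge(1-e^{-t})|y|-|x|$'' is not a valid lower bound; what holds is $|x-e^{-t}y|\ge e^{-t}|y|-|x|$, which for $|y|\gtrsim 1/r_B\gtrsim|x|$ gives $|x-e^{-t}y|\gtrsim|y|$ \emph{uniformly in $t$}, not $\gtrsim t|y|$. Correspondingly, the integral you claim is bounded, $\int_{|y|\,\text{large}}t^{-n/2}e^{-ct|y|^2}\,\dd y$, is in fact $\sim t^{-n}$, which blows up as $t\to 0$; the correct far-field contribution is $\int_{|y|\gtrsim 1/r_B}t^{-n/2}e^{-c|y|^2/t}\,\dd y\lesssim e^{-c'/(r_B^2 t)}\le e^{-c'r_B^2/t}$. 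With these repairs (the $t\gtrless\varepsilon r_B^2$ dichotomy, the near/far split at $|y|\sim 1/r_B$, and the corrected far bound) your elementary argument does close, but the sketch as written does not.
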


\begin{proof}
Since the operator $-\As$ is the weighted Laplacian on the weighted manifold $(\R^n,d_{Euc}, \gamma_{-1})$ and $\gamma_{-1}(2B\setminus B) \approx \gamma_{-1}(B)$ for all admissible balls $B$ by Lemma~\ref{adm}, the estimate is a consequence of Takeda's inequality~\cite[Theorem 12.9]{Grigor'yan}.
\end{proof}

\begin{proof}[Proof of Theorem~\ref{teo:X1Impow}]
(i) We begin by proving that $\As^{iu} \colon X^1_\mu(\gamma_{-1})\not \to L^1(\gamma_{-1})$ if $\mu>0$. Let $B=B(0,1)$, $\sigma_0\in S^{n-1}$ and choose $\psi_0 \in C_c^\infty(B)$ with integral zero with respect to $\dd \gamma_{-1}$ which is not orthogonal to $\Psi_{0,\sigma_0}$ in $L^2(B,\gamma_{-1})$. Such a $\psi_0$ exists by Lemma~\ref{Psi}, (3). Let $a= (\As+\mu I)\psi_0$, which is a multiple of an $X^1_\mu$-atom by Corollary~\ref{corlnotm}. Then
\begin{align*}
(\Psi_{0,\sigma_0}, a)_{L^2(B,\gamma_{-1})}=((\As +\mu I)\Psi_{0,\sigma_0}, \psi_0)_{L^2(B,\gamma_{-1})} = \mu (\Psi_{0,\sigma_0}, \psi_0)_{L^2(B,\gamma_{-1})}\neq 0.
\end{align*}
Thus, $\As^{iu} a \notin L^1(\gamma_{-1})$ by Lemma~\ref{lemmaunbounded0}.\par
To prove that $\As^{iu}$ is bounded $X^1_0(\gamma_{-1})\to L^1(\gamma_{-1})$, we follow the same line as \cite[Theorem 4.1]{MMV}, though with some differences, to prove that
\[
\sup \{ \|\As^{iu} a\|_1 \colon \, \mbox{$a$ is an $X^1_0$-atom}\}<\infty.
\]
Since $\As^{iu}$ is of weak type $(1,1)$ by Theorem~\ref{teo:weaktype}, (i), this implies the boundedness $X^1_0(\gamma_{-1})\to L^1(\gamma_{-1})$ by a classical argument~\cite[p.\ 95]{Grafakos}.\par

Let $a$ be an $X^1_0$-atom supported in an admissible ball $B$. We split
\[\|\As^{iu} a\|_1 = \|1_{2B} \As^{iu}a\|_1 + \|1_{(2B)^c}\As a\|_1.\]
By Schwarz's inequality, the size condition for $a$ and the spectral theorem
\[ \|1_{2B}\As^{iu}a \|_1\leq \gamma_{-1}(2B)^{1/2} \| \As^{iu}\|_2 \|a\|_2 \leq \bigg(\frac{\gamma_{-1}(2B)}{\gamma_{-1}(B)}\bigg)^{1/2}\]
which is bounded independently of $B$ since $\gamma_{-1}$ is locally doubling. 

As for the other term, we can write $\As^{iu}a  = \As^{iu+1} \As^{-1}a$.
Thus, by Schwarz's inequality and Theorem~\ref{support:pres}
\begin{align*}
\|1_{(2B)^c} \As^{iu}a\|_1 & \leq \|\As^{-1}a\|_2 \bigg[\int_B \bigg(\int_{(2B)^c}|k_{\As^{iu+1}}(x,y)|\,\dd\gamma_{-1}(x)\bigg)^2 \, \dd \gamma_{-1}(y)\bigg]^{1/2}\\&\leq\,C\, r_B^2 \bigg[\frac{1}{\gamma_{-1}(B)}\int_B \bigg(\int_{(2B)^c}|k_{\As^{iu+1}}(x,y)|\,\dd\gamma_{-1}(x)\bigg)^2 \, \dd \gamma_{-1}(y)\bigg]^{1/2}.
\end{align*}
It remains then to show that
\begin{equation}\label{lerB-2}
\frac{1}{\gamma_{-1}(B)}\int_B \bigg(\int_{(2B)^c}|k_{\As^{iu+1}}(x,y)|\,\dd\gamma_{-1}(x)\bigg)^2 \, \dd \gamma_{-1}(y)\bigg]^{1/2}\leq C r_B^{-2}
\end{equation}
for some $C$ independent of the ball $B$. By the subordination formula,
\[k_{\As^{iu+1}}(x,y) = c(u)\int_0^\infty t^{-iu-2} h_t(x,y)\, \dd t,\qquad\forall x\in B, y\in (2B)^c.\]
We split the integral on the right hand side as the sum of integrals over $(0,r_B^2]$ and $(r_B^2,\infty)$. Then
\begin{align*}
\int_{(2B)^c} \bigg|\int_{r_B^2}^\infty t^{-iu-2} h_t(x,y)\,\dd t\bigg| \, \dd \gamma_{-1}(x)&\leq \int_{r_B^2}^\infty t^{-2}\, \dd t \int_{(2B)^c}h_t(x,y)\, \dd \gamma_{-1}(x) \leq r_{B}^{-2}
\end{align*}
by the contractivity of $e^{-t\As}$ on $L^\infty$. Thus\[\bigg[\frac{1}{\gamma_{-1}(B)}\int_B \bigg(\int_{(2B)^c}\bigg|\int_{r_B^2}^\infty t^{-iu-2} h_t(x,y)\, \dd t\bigg|\,\dd\gamma_{-1}(x)\bigg)^2 \, \dd \gamma_{-1}(y)\bigg]^{1/2}\leq C \,r_B^{-2}.
\]
As for the integral over $(0,r_B^2]$, we observe that
\begin{align*}
\int_{(2B)^c} \left|\int_0^{r_B^2} t^{-iu-2} h_t(x,y)\, \dd t\right| \,\dd\gamma_{-1}(x) 
&\le \int_0^{r_B^2}t^{-2} \int_{(2B)^c} h_t(x,y) \,\dd\gamma_{-1}(x) \,\dd t \\
&= \int_0^{r_B^2}t^{-2} e^{-t\As}1_{(2B)^c}(y)\ \dd t,
\end{align*}
where, in the last line, we have used the symmetry of $h_t(x,y)$. Thus
\begin{align*}
\bigg[\frac{1}{\gamma_{-1}(B)}\int_B&\left(\int_{(2B)^c} \left|\int_0^{r_B^2} t^{-iu-2} h_t(x,y) \,\dd t \right|\,\dd\gamma_{-1}(x)\right)^2\, \dd\gamma_{-1}(y) \bigg]^{1/2}\\
&\le \bigg[\frac{1}{\gamma_{-1}(B)}\int_B\left(\int_0^{r_B^2} t^{-2} e^{-t\As}1_{(2B)^c}(y) \dd t\right)^2 \,\dd\gamma_{-1}(y)\bigg]^{1/2}\\
&\le \int_0^{r_B^2} t^{-2} \bigg[\frac{1}{\gamma_{-1}(B)}\int_B\left( e^{-t\As}1_{(2B)^c}(y)\right)^2 \,\dd\gamma_{-1}(y)\bigg]^{1/2}\ \dd t.
\end{align*}
The last line follows by Minkowski's integral inequality. By Lemma~\ref{almostTakeda} the last integral is bounded by 
\[\int_{0}^{r_B^2}e^{-cr_B^2/(2t)}t^{-2}\, \dd t = \frac{1}{r_B^2}\int_0^1 e^{-c/(2r)}r^{-2}\, \dd r\leq Cr_{B}^{-2}.\]
This concludes the proof of (i).\par

\smallskip

To prove (ii), observe that if $\lambda>0$ then $(\As+\lambda I)^{iu}$ is bounded $H^1(\gamma_{-1}) \to L^1(\gamma_{-1})$ by Theorem~\ref{ImpowH1}. \emph{A fortiori}, it is then bounded $X^1_\mu(\gamma_{-1}) \to L^1(\gamma_{-1})$ for every $\mu\geq 0$.
\end{proof}

\subsection{Proof of Theorem~\ref{teo:X1Riesz}}
\begin{lemma}\label{Lemma4B}
For every $\lambda \geq 1$ there exists $C>0$ such that for every ball $B\in \mathcal{B}_1$
\[\|\nabla (\As+\lambda I)^{1/2} f\|_{L^1((4B)^c)}\leq \frac{C}{r_B^2}\|f\|_{L^1(B)} \quad \forall \, f\in L^1(\gamma_{-1}), \; \supp f\subseteq \bar{B}.\]
\end{lemma}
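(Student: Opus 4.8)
The plan is to express $\nabla(\As+\lambda I)^{1/2}$ through its Schwartz kernel and to estimate the kernel pointwise for $x$ far from $\bar B$. First I would write, via the subordination formula for $(\As+\lambda I)^{1/2}$ (which can be obtained exactly as in Section~\ref{Sec:prel} from the formula $\xi^{1/2}=c\int_0^\infty(1-e^{-s\xi})s^{-3/2}\,\dd s$ applied to $\xi=\As+\lambda I$, combined with Proposition~\ref{Prop:negativeline}),
\[
\nabla(\As+\lambda I)^{1/2}f(x)=\int_{\R^n}K(x,y)f(y)\,\dd y,
\]
where $K(x,y)$, computed from $H_t(x,y)$ as in~\eqref{tRieszkernel} but with a different power of $t$ in front, has the shape
\[
K(x,y)= c\int_0^\infty t^{-3/2}\,\frac{e^{-(n+\lambda)t}}{(1-e^{-2t})^{(n+2)/2}}\,(x-e^{-t}y)\,e^{-|\phi(e^{-t},x,y)|^2}\,\dd t
\]
up to lower-order terms coming from differentiating the $t^{-1/2}$-type factors (these are even better behaved). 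The kernel with respect to $\gamma_{-1}$ then carries the extra factor $e^{-|x|^2}$, exactly as in~\eqref{Rieszkernel}.

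The heart of the matter is a bound of the form
\[
\int_{(4B)^c}|k_{\nabla(\As+\lambda I)^{1/2}}(x,y)|\,\gamma_{-1}(x)\,\dd x\le \frac{C}{r_B^2}\qquad\text{uniformly for }y\in B\in\mathcal B_1.
\]
Given this, the lemma follows immediately by Minkowski's integral inequality (Fubini):
$\|\nabla(\As+\lambda I)^{1/2}f\|_{L^1((4B)^c)}\le\int_B|f(y)|\big(\int_{(4B)^c}|k(x,y)|\,\gamma_{-1}(x)\,\dd x\big)\,\dd y\le \frac{C}{r_B^2}\|f\|_{L^1(B)}$. To prove the kernel bound I would split the $t$-integral at $t=r_B^2$. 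For $t>r_B^2$ one uses the crude estimate $|x-e^{-t}y|e^{-|\phi|^2}\lesssim (1-e^{-2t})^{1/2}e^{-|\phi|^2}$ absorbed into the Gaussian, reducing matters to $\int_{r_B^2}^\infty t^{-3/2}\,e^{-nt}\,h_t^{\mathrm{Leb-like}}(x,y)\,\dd t$; integrating $\gamma_{-1}(x)\,\dd x$ first and using that $e^{-t\As}$ is $L^\infty$-contractive (equivalently $\int_{\R^n}H_t(x,y)\,e^{|x|^2-|y|^2}\dots$ controlled), one gets $\int_{r_B^2}^\infty t^{-3/2}\,\dd t\approx r_B^{-1}\lesssim r_B^{-2}$ since $r_B\le 1$. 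For the delicate range $0<t\le r_B^2$, where the factor $t^{-3/2}$ is singular, I would use that $x\in(4B)^c$ and $y\in B$ force $|x-y|\gtrsim r_B$ (actually $|x-y|\ge 3r_B$), so that on $N_s$-type portions Lemma~\ref{lemmaKz} (or the local estimate in Lemma~\ref{lemmalocal}) yields polynomial-in-$|x-y|^{-1}$ decay, and the Gaussian factor $e^{-c|x-y|^2/t}$ combined with $\gamma_{-1}(x)=\pi^{n/2}e^{|x|^2}$ is controlled by a heat-kernel mass estimate of the type used in Lemma~\ref{almostTakeda} (Takeda's inequality), giving $\int_0^{r_B^2}t^{-3/2}e^{-cr_B^2/t}\,\dd t\lesssim r_B^{-1}\lesssim r_B^{-2}$.

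The main obstacle I expect is the $0<t\le r_B^2$ range: there the $t^{-3/2}$ singularity is one power worse than in the imaginary-power estimate~\eqref{lerB-2}, and one must genuinely exploit the separation $|x-y|\gtrsim r_B$ together with the super-Gaussian weight $e^{|x|^2}$ in $\gamma_{-1}$. The clean way is to observe that for $(x,y)$ with $x\in(4B)^c$, $y\in B\in\mathcal B_1$ one has $|x-e^{-t}y|\ge c|x-c_B|$ for small $t$ (cf.\ Lemma~\ref{lemmatech}), so that $e^{-|\phi(e^{-t},x,y)|^2}e^{|x|^2-|y|^2}$ is bounded by $Ce^{-c|x-c_B|^2/t}$ times a harmless factor; then $\int_{(4B)^c}e^{-c|x-c_B|^2/t}\,\dd x\le C(t)^{n/2}$, and integrating against $t^{-3/2}(1-e^{-2t})^{-(n+2)/2}\lesssim t^{-3/2-(n+2)/2}$ over $(0,r_B^2]$ and using the Gaussian tail $e^{-c r_B^2/t}$ (available because $|x-c_B|\ge 3r_B$ on $(4B)^c$) produces, after the substitution $t=r_B^2 s$, exactly the bound $r_B^{-2}$. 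One also has to check that the assumption $\lambda\ge1$ is used only to guarantee $L^p$-boundedness/well-definedness of $(\As+\lambda I)^{1/2}$ and the convergence of the subordination integral at $t\to\infty$ (where $e^{-(n+\lambda)t}$ with $\lambda\ge1$, hence $n+\lambda\ge n+1>0$, suffices), and that all constants depend only on $n$, $\lambda$ and the admissibility scale, not on $B$.
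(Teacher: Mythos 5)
Your overall strategy — reduce to the pointwise kernel bound $\int_{(4B)^c}|K(x,y)|\,\gamma_{-1}(x)/\gamma_{-1}(y)\,\dd x\le C r_B^{-2}$, observe that $\lambda\ge 1$ controls the integral at $t\to\infty$, and split the $t$-integral into a ``large'' and a ``small'' region — is in the right spirit, and the treatment of $t>r_B^2$ goes through (the extra $(1-e^{-2t})^{-1/2}\approx t^{-1/2}$ you dropped makes it give $r_B^{-2}$, not $r_B^{-1}$, but that is still the target). However there is a genuine gap in the small-$t$ range, and it is precisely the point that the paper's argument is designed to deal with.

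After using the identity $|\phi(r,x,y)|^2-|\psi(r,x,y)|^2=|x|^2-|y|^2$, the Gaussian that survives is $e^{-|\psi|^2}=e^{-|rx-y|^2/(1-r^2)}$, i.e.\ a Gaussian in $|e^{-t}x-y|$. Your step ``$|x-e^{-t}y|\ge c|x-c_B|$ for small $t$, so $e^{-|\phi|^2}e^{|x|^2-|y|^2}\le Ce^{-c|x-c_B|^2/t}$'' confuses this with the gradient factor $|x-e^{-t}y|$: the quantity $|x-e^{-t}y|$ sits outside the Gaussian, while the decay you need is in $|e^{-t}x-y|$. Crucially, the lower bound $|rx-y|\gtrsim|x-c_B|$ does \emph{not} hold uniformly for $t\le r_B^2$, $x\in(4B)^c$, $y\in\bar B$. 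For instance with $n=1$, $B=B(1,1)$, $y=2\in\bar B$, $x=5\in(4B)^c$, one has $rx-y=0$ at $r=2/5$, and $2/5>e^{-1}=e^{-r_B^2}$, so this happens inside your range $t\le r_B^2$. This is why Lemma~\ref{lemmatech} provides the lower bound only on a $y$-dependent range $r\in(1-r_{B,y},1)$ (with $r_{B,y}=r_B/(4|y|)$), which is in general strictly smaller than your $y$-independent $r\ge e^{-r_B^2}$ because $r_{B,y}$ can be as small as $\sim r_B^2/8$. The paper handles this by splitting instead at the fixed threshold $r=3/4$, treating $r<3/4$ by a crude change of variables $v=rx-y$ (which already yields $r_B^{-1}$, and this is where $\lambda\ge1$ enters), and then for $r\in(3/4,1)$ splitting further at $r=1-r_{B,y}$: on $(1-r_{B,y},1)$ Lemma~\ref{lemmatech} gives the Gaussian gain, and on $(3/4,1-r_{B,y})$ one simply extends the $x$-integral to $\R^n$ and bounds the explicit $r$-integral by $1/r_{B,y}+|y|/\sqrt{r_{B,y}}\lesssim r_B^{-2}$. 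Finally, your appeal to a Takeda-type estimate (Lemma~\ref{almostTakeda}) is misplaced here: Takeda's inequality controls the $L^2$-mass of $e^{-t\As}1_{(2B)^c}$ and is used in the paper for the imaginary powers, but it does not control the gradient of the heat kernel, which is what appears in Lemma~\ref{Lemma4B}; the paper's proof of this lemma does not use it at all.
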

\begin{proof}
Theorem~\ref{teo:complex_powers} provides the kernel of the translated square root $(\As+\lambda I)^{1/2}$. Then, it is not hard to see that
\begin{align*}
\|\nabla(\As+\lambda I)^{1/2} f\|_{L^1((4B)^c)} &\lesssim \int_{(4B)^c}\int_B \int_0^1 \frac{r^{n+\lambda-1}|x-ry| e^{-\frac{|rx-y|^2}{1-r^2}}}{(1-r^2)^{\frac{n+2}{2}}(-\log r)^{3/2}}  \, \dd r |f(y)| \, \dd \gamma_{-1}(y) \, \dd x\\& = \int_B I(y)|f(y)|\, \dd \gamma_{-1}(y)\,
\end{align*}
where for $y\in B$
\begin{equation}\label{integrale4B}
I(y)= \int_0^1 \frac{r^{n+\lambda-1}}{(1-r^2)^{(n+2)/2}(-\log r)^{3/2}}  \int_{(4B)^c} |x-ry|e^{-\frac{|rx-y|^2}{1-r^2}}\, \dd x \, \dd r.
\end{equation}
We aim at showing that $I(y)\leq Cr_B^{-2}$ for every $y\in B$. We split $I(y)$ into $I_1(y) + I_2(y)$ according to the splitting $(0,1)= (0,3/4) \cup (3/4,1)$. Thus
\begin{align*}
I_1(y) \leq C \int_0^{3/4} \frac{r^{n+\lambda-1}}{(-\log r)^{3/2}}\int_{(4B)^c} |x-ry|e^{-c|rx-y|^2}\, \dd x \, \dd r.
\end{align*}
By the change of variables $rx-y=v$ in the inner integral and extending the integral over $(4B)^c$ to $\R^n$, we get
\begin{align*}
I_1(y) \leq C \int_0^{3/4} \frac{r^{\lambda-2}}{(-\log r)^{3/2}}\int_{\R^n} |v+(1-r^2)y|e^{-c|v|^2}\, \dd v \, \dd r.
\end{align*}
Now observe that, since $|y|\leq |c_B|+r_B \leq 2/r_B$,
\[|v+(1-r^2)y| \leq |v|+|y|\leq |v|+\frac{2}{r_B}\leq C\frac{|v|+1}{r_B}\]
and hence
\[
I_1(y) \leq \frac{C}{r_B} \int_0^{3/4} \frac{r^{\lambda-2}}{(-\log r)^{3/2}}\int_{\R^n} (|v|+1)e^{-c|v|^2}\, \dd v \, \dd r \leq \frac{C}{r_B} \int_0^{3/4} \frac{r^{\lambda-2}}{(-\log r)^{3/2}}\, \dd r\leq \frac{C}{r_B}
\]
for every $\lambda \geq 1$. Therefore, \emph{a fortiori}, $I_1(y)\leq Cr_B^{-2}$. Before looking at $I_2(y)$, we observe that for every $r\in (3/4,1)$, since $x-ry= (1-r^2)x + r(rx-y)$,
\begin{align*}
|x-ry|&\leq (1-r^2)|x|+r|rx-y|\\& \leq \frac{1-r^2}{r}|rx-y| +\frac{1-r^2}{r}|y| +r|rx-y| \leq C\bigg[|rx-y| + (1-r^2)|y|\bigg].
\end{align*}
Hence
\begin{align*}
I_2(y) &\leq C  \int_{3/4}^1 \frac{1}{(1-r^2)^{n/2 +2 }}\int_{(4B)^c} \bigg[\frac{|rx-y|}{\sqrt{1-r^2}} + \sqrt{1-r^2}|y|\bigg]e^{-\frac{|rx-y|^2}{1-r^2}}\, \dd x\, \dd r.
\end{align*}
Since $se^{-s^2}\leq C e^{-s^2/2}$ for $s>0$ and $e^{-s^2}\leq e^{-s^2/2}$, we get
\begin{align*}
I_2(y) & \leq C \int_{3/4}^1 \frac{1 + \sqrt{1-r^2}|y|}{(1-r^2)^{2 }}(1-r^2)^{-n/2}\int_{(4B)^c}e^{-\frac{|rx-y|^2}{2(1-r^2)}}\, \dd x\, \dd r.
\end{align*}
We now consider the cases when $r_{B,y}\geq 1$ and $r_{B,y}<1$ (recall Lemma~\ref{lemmatech} for the notation) separately. If $r_{B,y}\geq 1$, by Lemma~\ref{lemmatech}, 1.\ and 3.,
\begin{align*}
I_2(y) \leq \int_{3/4}^1 \frac{1 + \sqrt{1-r^2}|y|}{(1-r^2)^{2 }} \varphi \bigg(\frac{r_B}{\sqrt{1-r^2}}\bigg)\, \dd r
\end{align*}
that with the change of variable $r_B / \sqrt{1-r^2}=s$ turns out to be
\[I_2(y) \leq \frac{C}{r_B^2}\int_0^\infty (s+r_B |y|)\varphi(s)\, \dd s \leq \frac{C}{r_B^2}\int_0^\infty (s+1)\varphi(s)\, \dd s = \frac{C}{r_B^2}\]
since $r_B|y|\leq C$. If $r_{B,y}<1$, we split $(3/4,1)=(3/4, 1-r_{B,y}) \cup (1-r_{B,y},1)$ and $I_2(y) = I_2^1(y)+I_2^2(y)$ accordingly. Thanks to Lemma~\ref{lemmatech}, $I_2^2(y)$ can be treated exactly as we just did in the case $r_{B,y}\geq 1$, so it remains to consider only $I_2^1(y)$. By the change of variable $rx-y=v$ in the inner integral, we get
\begin{align*}
I_2^2(y)& \leq C \int_{3/4}^{1-r_{B,y}} \frac{1+\sqrt{1-r^2}|y|}{(1-r^2)^2} \int_{\R^n} e^{-|v|^2}\, \dd v\, \dd r
\\&\leq C \int_{3/4}^{1-r_{B,y}} \frac{1+\sqrt{1-r}|y|}{(1-r)^2}\, \dd r \leq C \bigg( \frac{1}{r_{B,y}} + \frac{|y|}{\sqrt{r_{B,y}}}\bigg)\leq \frac{C}{r_B^2}
\end{align*}
since $|y|\leq C/r_B$ and by the definition of $r_{B,y}$.
\end{proof}

For every $\lambda,\mu\geq0$ define the function
\[G_{\lambda,\mu}(z)\coloneqq\frac{z+\lambda}{z+\mu}=1 +\frac{\lambda-\mu}{z+\mu}\qquad z\geq 0,\]
and the corresponding multiplier for $\As$
\begin{equation}\label{GmunuA}
G_{\lambda,\mu}(\As) = (\As +\lambda I)(\As + \mu I)^{-1} = I + (\lambda-\mu)(\As +\mu I)^{-1}.
\end{equation}
\begin{lemma}\label{lemma4cose}
Let $\lambda,\mu\geq0$. Then
\begin{itemize}
\item[\emph{(1)}] $G_{\lambda,\mu}(\As)$ is bounded on $L^2(\gamma_{-1})$ for every $\lambda,\mu \geq 0$;
\item[\emph{(2)}] if $a$ is an $X^1_\mu$-atom supported in a ball $B$, then $\|G_{\lambda,\mu}(\As)\|_2^{-1} G_{\lambda,\mu}(\As)a$ is an $X^1_\lambda$-atom supported in $B$;
\item[\emph{(3)}] for every $f\in L^2(\gamma_{-1})$
\[(\Rs_\lambda)_j G_{\mu,\lambda}(\As)f = G_{\mu,\lambda}(\As -I)(\Rs_\lambda)_j f;\]
\item[\emph{(4)}] if $\lambda>1$, $G_{\mu,\lambda}(\As-I)$ is bounded on $L^1(\gamma_{-1})$ for every $\mu\geq 0$.
\end{itemize}
\end{lemma}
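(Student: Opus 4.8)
The plan is to dispatch (1), (3) and (4) quickly from the spectral picture and the support-preservation result, and to concentrate the work on (2). For (1): $\sigma_2(\As+\mu I)=\{n+\mu,n+\mu+1,\dots\}$ is bounded away from $0$, so $(\As+\mu I)^{-1}$ is bounded on $L^2(\gamma_{-1})$, with norm at most $(n+\mu)^{-1}$; by~\eqref{GmunuA} the operator $G_{\lambda,\mu}(\As)=I+(\lambda-\mu)(\As+\mu I)^{-1}$ is therefore bounded on $L^2(\gamma_{-1})$.

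For (2), let $a$ be an $X^1_\mu$-atom supported in $B\in\mathcal{B}_1$ and put $w\coloneqq(\As+\mu I)^{-1}a$, so that by~\eqref{GmunuA} one has $G_{\lambda,\mu}(\As)a=a+(\lambda-\mu)w=(\As+\lambda I)w$. By Theorem~\ref{support:pres}, applied with $\mu$ in place of $\lambda$, $\supp w\subseteq\bar B$, so $G_{\lambda,\mu}(\As)a$ is supported in $\bar B$; the $L^2$-size condition for $\|G_{\lambda,\mu}(\As)\|_2^{-1}G_{\lambda,\mu}(\As)a$ is immediate from (1). The substantive point is the cancellation $G_{\lambda,\mu}(\As)a\in q^2_\lambda(\bar B)^\perp$, and the preliminary fact I would prove is that $\int_{\bar B}w\,\dd\gamma_{-1}=0$: choosing $\rho\in C_c^\infty(\R^n)$ equal to $1$ on a bounded open neighbourhood of $\bar B$, the self-adjointness of $(\As+\mu I)^{-1}$ and $\supp w\subseteq\bar B$ give $\int_{\bar B}w\,\dd\gamma_{-1}=(w,\rho)_{L^2(\gamma_{-1})}=(a,(\As+\mu I)^{-1}\rho)_{L^2(\gamma_{-1})}$, and since $(\As+\mu I)$ applied to $(\As+\mu I)^{-1}\rho$ equals $\rho$, which is constant on a neighbourhood of $\bar B$, the restriction of $(\As+\mu I)^{-1}\rho$ to $\bar B$ lies in $q^2_\mu(\bar B)$; this pairing thus vanishes because $a$ is an $X^1_\mu$-atom. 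Then, given $v\in q^2_\lambda(\bar B)$, write $v=\tilde v|_{\bar B}$ with $\tilde v\in L^2(\Omega)$ on a bounded open $\Omega\supseteq\bar B$ and $(\As+\lambda I)\tilde v=c$ constant on $\Omega$ (so $\tilde v$ is smooth by elliptic regularity), pick $\chi\in C_c^\infty(\Omega)$ equal to $1$ near $\bar B$, and compute, using that $G_{\lambda,\mu}(\As)a=(\As+\lambda I)w$ is supported in $\bar B$, that $\chi\tilde v=v$ and $(\As+\lambda I)(\chi\tilde v)=c$ on $\bar B$, and that $\As+\lambda I$ is self-adjoint,
\begin{align*}
\big(G_{\lambda,\mu}(\As)a,v\big)_{L^2(\bar B,\gamma_{-1})}&=\big((\As+\lambda I)w,\chi\tilde v\big)_{L^2(\gamma_{-1})}=\big(w,(\As+\lambda I)(\chi\tilde v)\big)_{L^2(\gamma_{-1})}\\
&=(w,c)_{L^2(\bar B,\gamma_{-1})}=\bar c\int_{\bar B}w\,\dd\gamma_{-1}=0.
\end{align*}
I expect this cancellation step to be the main obstacle: it invokes the support result twice (once directly, once through $\int_{\bar B}w=0$) and two cutoff manipulations, and care is needed about which inner products are over $\R^n$ and which over $\bar B$.

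For (3), the key is the commutation relation $\partial_j\As\varphi=(\As-I)\partial_j\varphi$ for $\varphi\in C^\infty(\R^n)$, which is a one-line computation from $\As=-\tfrac12\Delta-x\cdot\nabla$. Let $\{h_k\}$ be the orthonormal basis of $L^2(\gamma_{-1})$ of (smooth, rapidly decreasing) eigenfunctions of $\As$, $\As h_k=\nu_k h_k$ with $\nu_k\ge n$; the commutation relation shows $\partial_j h_k$ is an eigenfunction of $\As-I$ with the same eigenvalue $\nu_k$, and then, recalling $(\Rs_\lambda)_j=\partial_j(\As+\lambda I)^{-1/2}$, both $(\Rs_\lambda)_j G_{\mu,\lambda}(\As)h_k$ and $G_{\mu,\lambda}(\As-I)(\Rs_\lambda)_j h_k$ equal $(\nu_k+\mu)(\nu_k+\lambda)^{-3/2}\partial_j h_k$. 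Since both sides are bounded operators on $L^2(\gamma_{-1})$ — by (1), the $L^2$-boundedness of the Riesz transforms, and the boundedness of $G_{\mu,\lambda}$ on $[n,\infty)$, which contains the relevant part of the spectrum of $\As-I$ — and they agree on the dense span of $\{h_k\}$, they coincide. Lastly, (4) is immediate from Proposition~\ref{propspectrum}: by~\eqref{GmunuA} applied to $\As-I$, $G_{\mu,\lambda}(\As-I)=I+(\mu-\lambda)\big(\As+(\lambda-1)I\big)^{-1}$, and for $\lambda>1$ the number $-(\lambda-1)$ has negative real part, hence lies outside $\sigma_1(\As)=\{\Re z\ge0\}$, so $\big(\As+(\lambda-1)I\big)^{-1}$, and with it $G_{\mu,\lambda}(\As-I)$, is bounded on $L^1(\gamma_{-1})$.
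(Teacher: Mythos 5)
Your proof is correct and follows the paper's approach in all four parts: (1) by inspecting the spectrum, (2) by support preservation plus the cancellation computation, (3) by checking on eigenfunctions and using density, (4) by the spectral picture on $L^1(\gamma_{-1})$.

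The only noticeable divergence, and it is to your credit, is in part (2). The paper compresses the cancellation argument into one line, writing
$(v, G_{\lambda,\mu}(\As)a) = ((\As +\lambda I)v, (\As +\mu I)^{-1}a) =0$
and justifying the vanishing simply by asserting that $(\As +\mu I)^{-1}a$ has zero mean; it neither justifies the formal integration by parts (which a priori could produce boundary terms, since $w=(\As+\mu I)^{-1}a$ is supported in $\bar B$ but need not vanish there) nor proves the zero-mean claim. Your version fills in both gaps: the cutoff $\chi$ turns the $L^2(\bar B)$ pairing into a global $L^2(\gamma_{-1})$ pairing of compactly supported functions, making the self-adjointness of $\As+\lambda I$ directly applicable, and the cutoff $\rho$ argument proves $\int_{\bar B} w\,\dd\gamma_{-1}=0$ in a way that works uniformly for all $\mu\ge 0$ (a direct integration of $(\As+\mu I)w=a$ would only give $\mu\int w=0$, which is inconclusive when $\mu=0$). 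For (4), you replace the paper's appeal to the Dunford--Schwartz holomorphic functional calculus by the elementary observation that $G_{\mu,\lambda}(\As-I)-I$ is a multiple of the resolvent $(\As+(\lambda-1)I)^{-1}$ and $-(\lambda-1)\notin\sigma_1(\As)$; this is an equivalent but more self-contained route for a M\"obius function. These are refinements of the same argument, not a genuinely different proof.
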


\begin{proof}
To prove (1), observe that for every $\lambda\geq 0$ and $\mu\geq 0$, $G_{\lambda,\mu}$ is bounded on $\sigma_2(\As) =\{n,n+1,\dots\}$ (recall Proposition~\ref{propspectrum}). 

To prove (2), notice that by Theorem~\ref{support:pres}
\[\supp G_{\lambda,\mu}(\As)\, a \subseteq \supp (\As +\mu I)^{-1}a \subseteq \bar{B}.\]
The size estimate is a consequence of the boundedness of $G_{\lambda,\mu}(\As)$ on $L^2(\gamma_{-1})$ proved in (1) and the size estimate on $a$. Finally, if $v\in q^2_\lambda(B)$,
\[(v, G_{\lambda,\mu}(\As)a) = ((\As +\lambda I)v, (\As +\mu I)^{-1}a) =0\]
since $(\As +\mu I)^{-1}a$ has zero mean with respect to $\gamma_{-1}$.

As for (3), it is easy to check that the identity holds when $f$ is one of the eigenfunctions of $\As$. Since the operators $(\Rs_\lambda)_j G_{\mu,\lambda}(\As)$ and $ G_{\mu,\lambda}(\As -I)(\Rs_\lambda)_j$ are  both bounded on $L^2(\gamma_{-1})$, the conclusion follows by density.\par

Finally, to prove (4), we recall that $\sigma_1(\As)=\{z\in \C\colon \Re z\geq 0\}$ by Proposition~\ref{propspectrum}.
Thus, if $\lambda>1$ and $\mu\ge 0$, the function 
\[G_{\mu,\lambda}(z-1)=\frac{z+\mu-1}{z+\lambda-1} = 1 +\frac{\mu-\lambda}{z+\lambda-1},\]
is bounded and holomorphic in a neighbourhood of $\sigma_1(\As)\cup\{\infty\}$. Hence the operator $G_{\mu,\lambda}(\As-I)$ is bounded on $L^1(\gamma_{-1})$ by~\cite[Theorem VII.9.4]{DunfordSchwartz}.
\end{proof}

\begin{proof}[Proof of Theorem~\ref{teo:X1Riesz}]
We begin by proving the unboundedness results, i.e., that $(\Rs_\lambda)_j$ is not bounded from $X^1_\mu(\gamma_{-1})$ to $L^1(\gamma_{-1})$ when $\lambda \in [0,1)$ and $\mu\ge 0$, and that $(\Rs_1)_j$ is not bounded from $X^1_\mu (\gamma_{-1})$ to $L^1(\gamma_{-1})$ if $\mu\not=1$. 

To do this, choose $\sigma_0\in S^{n-1}$ and let $B=B(0,1)$. By Lemma~\ref{Psi} (3),   there exists a function $\psi_0\in C^\infty_c(B)$ with $\int\psi_0\,\dd \gamma_{-1}=0$ such that $(\Psi_{\lambda,\sigma_0}, \psi_0)_{L^2(B,\gamma_{-1})}\neq 0$. By Corollary~\ref{corlnotm}, (1) the function $a_\mu=(\As+\mu I)\psi_0$ is a multiple of an $X^1_\mu$-atom. Then, by Lemma~\ref{Psi} (1)
\begin{align}\label{eqPsiatom}
(\Psi_{\lambda,\sigma_0}, a_\mu)_{L^2(B,\gamma_{-1})} =((\As +\mu I)\Psi_{\lambda,\sigma_0}, \psi_0)_{L^2(B,\gamma_{-1})}= (\mu-\lambda)(\Psi_{\lambda,\sigma_0}, \psi_0)_{L^2(B,\gamma_{-1})}.
\end{align}
If $\lambda\not=\mu$, then $(\Psi_{\lambda,\sigma_0}, a_\mu)_{L^2(B,\gamma_{-1})}\not=0$. Thus $(\Rs_\lambda)_j a_\mu \notin L^1(\gamma_{-1})$ for every $j=1,\dots,n$, $\lambda\in[0,1]$ and $\mu \neq \lambda$, by Lemma~\ref{lemmaunbounded} (i).
This proves both statements when $\lambda\not=\mu$. 
\par
It remains to consider the case $\lambda=\mu\in[0,1)$. Let $\psi_0$ and $a_\lambda$ be as before. Then $(\Psi_{\lambda,\sigma}, a_\lambda)_{L^2(B,\gamma_{-1})}=0$ for every $\sigma\in S^{n-1}$, by the same argument of~\eqref{eqPsiatom}. Moreover Lemma~\ref{Psi}, (1) yields
\[
(\Phi_{\lambda,\sigma_0}, a_\lambda)_{L^2(B,\gamma_{-1})} =((\As +\lambda I)\Phi_{\lambda,\sigma_0}, \psi_0)_{L^2(B,\gamma_{-1})}=2 (\Psi_{\lambda,\sigma_0}, \psi_0)_{L^2(B,\gamma_{-1})} \neq 0, 
\]
and the conclusion follows by Lemma~\ref{lemmaunbounded}, (ii).\par

\smallskip

We now prove that for every $j=1,\dots,n$ and $\lambda\geq 1$ the operator $(\Rs_\lambda)_j$ is uniformly bounded on $X^1_\lambda$-atoms, i.e.\ that if $\lambda\geq 1$
\begin{align}\label{uniform:bound:Riesz}
\sup \{ \|(\Rs_\lambda)_j a\|_1 \colon \, \mbox{$a$ is an $X^1_\lambda$-atom}\}<\infty.
\end{align}
The boundedness $(\Rs_\lambda)_j \colon X^1_\lambda(\gamma_{-1}) \to L^1(\gamma_{-1})$ will then follow by this and the weak type $(1,1)$ of $(\Rs_\lambda)_j$ proved in Theorem~\ref{teo:weaktype}, (ii), by the classical argument in~\cite[p.\ 95]{Grafakos}. The proof of~\eqref{uniform:bound:Riesz} follows essentially the same line as~\cite[Theorem 1.2]{B} (and of~\cite[Theorem 5.3]{MMV}).\par

Let $a$ be an $X^1_\lambda$-atom supported in an admissible ball $B$. Since
\[\|(\Rs_\lambda)_j a\|_{L^1}= \|(\Rs_\lambda)_j a\|_{L^1(4B)} + \|(\Rs_\lambda)_j a\|_{L^1((4B)^c)}\]
we estimate the two summands separately. By Cauchy-Schwartz inequality
\[\|(\Rs_\lambda)_j a\|_{L^1(4B)} \leq \gamma_{-1}(4B)^{1/2}\|(\Rs_\lambda)_j a\|_{L^2(4B)}\leq C \|a\|_2 \gamma_{-1}(4B)^{1/2} \leq C\]
where we used the boundedness of $(\Rs_\lambda)_j$ on $L^2(\gamma_{-1})$, the size property of $a$ and the local doubling property of $\gamma_{-1}$. As for the second term, we write 
\[(\Rs_\lambda)_j a= \partial_j (\As+\lambda I)^{1/2}[(\As+\lambda I)^{-1}a]. \]
Observe that $\supp (\As+\lambda I)^{-1}a \subseteq \bar{B}$, by Theorem~\ref{support:pres}. Thus, by Lemma~\ref{Lemma4B}
\[\|(\Rs_\lambda)_j a\|_{L^1((4B)^c)} \leq \frac{C}{r_B^2}\|(\As+\lambda I)^{-1}a\|_{L^1(B)}\]
so that it remains to estimate $\|(\As+\lambda I)^{-1}a\|_{L^1(B)}$. By Cauchy-Schwartz and Theorem~\ref{support:pres}
\[\|(\As+\lambda I)^{-1}a\|_{L^1(B)} \leq \gamma_{-1}(B)^{1/2}\|(\As+\lambda I)^{-1}a\|_{L^2(B)}\leq r_B^2.\]
This concludes the proof of~\eqref{uniform:bound:Riesz}. For what we said above, this implies (ii), and also (iii) when $\lambda=\mu$.\par

\smallskip

To complete the proof of (iii), let $\lambda>1$ and $\mu \geq 0$. Let $a$ be an $X^1_\mu$-atom. As before, it suffices to show that there exists a constant $C$ such that $\|(\Rs_\lambda)_j a\|_1 \leq C$. Since $G_{\lambda,\mu}(\As)G_{\mu,\lambda}(\As)=~I$,
\begin{align*}
(\Rs_\lambda)_j a &= G_{\mu,\lambda}(\As -I)\, (\Rs_\lambda)_j \, G_{\lambda,\mu}(\As) a 
\end{align*}
by Lemma~\ref{lemma4cose} (3). Thus
\[
(\Rs_\lambda)_j a= \|G_{\lambda,\mu}(\As)\|_{2,2}\, G_{\mu,\lambda}\,(\As -I)\, (\Rs_\lambda)_j\, \left[\|G_{\lambda,\mu}(\As)\|_{2,2}^{-1}\,G_{\lambda,\mu}(\As) a\right].
\]
Since $\|G_{\lambda,\mu}(\As)\|_{2,2}^{-1}\ G_{\lambda,\mu}(\As) a$ is an $X^1_\lambda$-atom by Lemma~\ref{lemma4cose} (2), $(\Rs_\lambda)_j$ is bounded from $X^1_\lambda$ to $L^1(\gamma_{-1})$ for what seen above and $G_{\mu,\lambda}(\As -I)$ is bounded on $L^1(\gamma_{-1})$ by Lemma~\ref{lemma4cose}, (4), the conclusion follows.
\end{proof}

\subsection*{Acknowledgements} It is my pleasure to thank Giancarlo Mauceri for his constant help, support and precious advice.

\end{document}